\theoremstyle{definition}
\newtheorem{thm}{Theorem}[section]
\newtheorem{cor}[thm]{Corollary}
\newtheorem{lem}[thm]{Lemma}
\newtheorem{rmk}[thm]{Remark}
\newtheorem{prop}[thm]{Proposition}
\numberwithin{equation}{section}
\address{MIT, Dept. of Math.\\
77 Massachusetts Avenue, Cambridge, MA 02139-4307.}
\email{jchang61@mit.edu}
\title{Lower bounds for nodal sets of biharmonic Steklov problems}
\author{Jui-En Chang} 
\date{\today}
\begin{document}
\maketitle
\begin{abstract}
We use layer potential to establish that the boundary biharmonic Steklov operators are elliptic pseudo-differential operators. Thus we are able to establish lower bounds on both the measure of boundary nodal sets and interior nodal sets for biharmonic Steklov eigenfunctions.
\end{abstract}

\section{Introduction}
Let $M$ be a compact $n$-dimensional manifold with smooth boundary $\partial M$. We consider the following three biharmonic Steklov eigenfunction problems:
\begin{equation} \label{Theta}
\Bigg \{
\begin{array}{ll}
\triangle^2 e_\lambda=0 & $in $ M \\
\partial_\nu e_\lambda=\partial_\nu \triangle e_\lambda+\lambda^3e_\lambda=0 & $on $ \partial M; \end{array}
\end{equation}
\begin{equation} \label{Xi}
\Bigg \{
\begin{array}{ll}
\triangle^2 e_\lambda=0 & $in $ M \\
e_\lambda=\triangle e_\lambda-\lambda\partial_\nu e_\lambda=0 & $on $ \partial M; \end{array}
\end{equation}
\begin{equation} \label{Pi}
\Bigg \{
\begin{array}{ll}
\triangle^2 e_\lambda=0 & $in $ M \\
e_\lambda=\partial_\nu^2 e_\lambda-\lambda\partial_\nu e_\lambda=0 & $on $ \partial M. \end{array}
\end{equation}
The problems arise in elastic mechanics. When the weight of the body $M$ is the only body force, the stress function must be biharmonic in $M$. In addition, the problem \eqref{Xi} is referred to as the Dirichlet eigenvalue problem in \cite{P} and it is related to the study of Poisson ratio in theory of elasticity, see \cite{FGW}. Kutter and Sigillito\cite{KS}, Payne\cite{P}, Wang and Xia\cite{WX} focus on giving bounds for the first eigenvalues, which are related to the geometry of the manifold.

These problems are also important in biharmonic analysis and the inverse problem. The related problem was initially studied by Calder\'{o}n\cite{C}. The connection is that the set of the eigenvalues for the biharmonic Steklov problem \eqref{Theta} and \eqref{Xi} are the same as that of the well-known ``Dirichlet to Neumann Laplacian" map and the ``Neumann to Laplacian" map for biharmonic equation, respectively. These maps concern the relation between different boundary data of biharmonic functions.

In each of the problems, the spectrum is discrete and the only accumulation points of eigenvalues is infinity. In view of the important applications, one is interested in finding the asymptotic behavior for eigenvalues and corresponding eigenfunctions. The Weyl-type formula which concerns the distribution of the eigenvalues is given by Liu in \cite{L1}, \cite{L2}. In this paper, we are interested in the behavior of the eigenfunctions. We give lower bounds of the measure of nodal sets for all the biharmonic Steklov problems.

Let us briefly review the literature concerning the study of the nodal sets for other eigenfunction problems. First, the eigenfunction $e_\lambda$ with eigenvalue $\lambda$ for the Laplace operator on a smooth compact manifold satisfies
\begin{equation}
\triangle e_\lambda=-\lambda^2 e_\lambda.
\end{equation}
Yau\cite{Y1}, \cite{Y2} conjectures that for the nodal set $Z_\lambda=\{e_\lambda=0\}$,
\begin{equation}
c\lambda\leq|Z_\lambda|\leq C\lambda
\end{equation}
for some constant $c,$ $C$ only depends on $M$. The lower bound was proven by Br\"{u}ning\cite{B} and Yau for surfaces. Donnelly and Fefferman\cite{DF} establish both bounds for analytic manifolds. However, for $n$-dimensional smooth manifolds, the conjecture remains open. In this case, there are some bounds established for the nodal sets which are weaker than that in the conjecture. Colding and Minicozzi\cite{CM} give the best lower bound $|Z_\lambda| \geq c\lambda^\frac{3-n}{2}$. For alternative proofs, see Hezari and Sogge\cite{HS}, Sogge and Zelditch\cite{SZ}.

For the case of the harmonic Steklov eigenfunction, $e_\lambda$ is defined as the solution of 
\begin{equation} 
\Bigg \{
\begin{array}{cc}
\triangle e_\lambda=0 & $in $ M \\
\partial_\nu e_\lambda=\lambda e_\lambda & $on $ \partial M. \end{array}
\end{equation}
If we restrict $e_\lambda$ on the boundary, it satisfies the eigenvalue problem
\begin{equation}
\Lambda e_\lambda=\lambda e_\lambda,
\end{equation}
where $\Lambda$, the Dirichlet-to-Neumann operator, is defined as
\begin{equation}
\Lambda f=\partial_\nu(Kf)|_{\partial M},
\end{equation}
for $f\in C^\infty(\partial M)$ and $Kf$ is the unique harmonic function in $M$ with boundary value $f$.

Let $\tilde{Z}_\lambda=\{x\in\partial M|e_\lambda=0\}$ be the boundary nodal set for harmonic Steklov eigenfunction $e_\lambda$. Bellova and Lin\cite{BL} first establish a upper bound $|\hat{Z_\lambda}|\leq C\lambda^6$ on analytic domains in $\mathbb{R}^n$. Later, Zelditch\cite{Z} gives the sharp upper bound $|\hat{Z_\lambda}|\leq C\lambda$ on analytic manifolds with analytic boundary. For the case of a smooth manifold, Wang and Zhu\cite{WZ} establish a lower bound $|\tilde{Z}_\lambda|\geq c \lambda^\frac{4-n}{2}$. Notice that this is the same order as in the Laplacian operator case since the dimension of $\partial M$ is $n-1$. For the interior nodal set $Z_\lambda=\{e_\lambda=0\}\subset M$, Sogge, Wang and Zhu\cite{SWZ} establish a lower bound $|Z_\lambda|\geq c\lambda^\frac{2-n}{2}$. All the current best lower bounds on smooth manifolds employ the theory of pseudo-differential operators to get an $L^p$ estimate of eigenfunctions.

In this article, we establish polynomial lower bounds for size of the boundary nodal sets, the vanishing sets of $\triangle e_\lambda$ and the interior nodal sets.
\begin{thm}\label{boundaryset}
If 0 is a regular value of $e_\lambda$ on $\partial M$ for \eqref{Theta} case, or 0 is a regular value of $\partial_\nu e_\lambda$ on $\partial M$ for \eqref{Xi}, \eqref{Pi} case, we have
\begin{equation}
\big|\tilde{Z}_\lambda\big|\geq c\lambda^{\frac{4-n}{2}}.
\end{equation}
where
\begin{equation}
\begin{array}{ll}
\tilde{Z}_\lambda=\{x\in\partial M|e_\lambda=0\} & $for problem \eqref{Theta}$,\\
\tilde{Z}_\lambda=\{x\in\partial M|\partial_\nu e_\lambda=0\} & $for problems$ \eqref{Xi}. \eqref{Pi}.\end{array}
\end{equation}
\end{thm}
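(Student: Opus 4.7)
The plan is to translate Theorem~\ref{boundaryset} into a nodal-set problem for eigenfunctions of an elliptic pseudo-differential operator on the closed $(n-1)$-dimensional manifold $\partial M$, and then adapt the nodal-set lower bound machinery developed by Sogge--Zelditch, Hezari--Sogge, and Wang--Zhu. The target exponent $(4-n)/2$ coincides with the Hezari--Sogge/Colding--Minicozzi lower bound for Laplace eigenfunctions on a manifold of dimension $n-1$, which is the guiding analogy: Wang--Zhu \cite{WZ} carried out precisely this transfer in the harmonic Steklov setting, and the present paper makes such a transfer available for biharmonic Steklov via its main pseudo-differential structure result.

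First I would invoke the structural result promised in the abstract: in each of the three problems, the boundary biharmonic Steklov operator is an elliptic pseudo-differential operator $P$ on $\partial M$ of some positive order $m$, and the appropriate boundary trace of the eigenfunction---$u:=e_\lambda|_{\partial M}$ for \eqref{Theta} and $u:=\partial_\nu e_\lambda|_{\partial M}$ for \eqref{Xi},\eqref{Pi}---satisfies $Pu = c_m\lambda^m u$ on $\partial M$. Because the principal symbol of $P$ is a positive multiple of that of $(-\triangle_{\partial M})^{m/2}$, the trace $u$ behaves at leading order like a Laplace--Beltrami eigenfunction on $\partial M$ with eigenvalue of size $\lambda^2$.

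Normalize $\|u\|_{L^2(\partial M)}=1$. Sogge-type spectral-cluster bounds for elliptic $\Psi$DOs on the closed $(n-1)$-manifold $\partial M$ then give
\[
\|u\|_{L^\infty(\partial M)} \leq C\lambda^{(n-2)/2}, \qquad \|\nabla_{\partial M} u\|_{L^\infty(\partial M)} \leq C\lambda^{n/2},
\]
and the trivial interpolation $1\leq\|u\|_{L^1}\|u\|_{L^\infty}$ supplies $\|u\|_{L^1(\partial M)}\geq c\lambda^{-(n-2)/2}$. The regular-value hypothesis makes $\tilde Z_\lambda$ a smooth closed hypersurface in $\partial M$, so a Dong-type integral identity applies: isolating the Laplace--Beltrami principal part of $P^{2/m}$ yields
\[
\lambda^2\int_{\partial M}|u|\,d\sigma \;\leq\; C\int_{\tilde Z_\lambda}|\nabla_{\partial M} u|\,d\sigma' + R,
\]
with remainder $R$ controlled by the $L^p$ bounds above. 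A crude $L^\infty$ gradient bound here gives only $|\tilde Z_\lambda|\geq c\lambda^{3-n}$, so one invokes the Hezari--Sogge Cauchy--Schwarz refinement---estimating $\int_{\tilde Z_\lambda}|\nabla_{\partial M} u|$ by $|\tilde Z_\lambda|^{1/2}(\int_{\tilde Z_\lambda}|\nabla_{\partial M} u|^2)^{1/2}$ and using a Burq--G\'{e}rard--Tzvetkov-type restriction estimate on the smooth hypersurface $\tilde Z_\lambda\subset\partial M$---to sharpen the exponent to the claimed $(4-n)/2$.

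The principal obstacle is the Dong-type identity: the classical pointwise relation concentrating $\triangle|u|$ on the nodal set is strictly local, whereas $P$ is nonlocal. One must extract the Laplace--Beltrami principal part of $P^{2/m}$, verify that $P^{2/m}-(-\triangle_{\partial M})$ has $\Psi$DO order strictly less than $2$, and then control its action on the merely Lipschitz function $|u|$ via the a priori $L^p$ bounds of the previous step without losing on the exponent. Subsidiary issues---uniform control of $|\nabla_{\partial M} u|$ near $\tilde Z_\lambda$ and the restriction $L^2$-estimate on $\tilde Z_\lambda$---are routine under the regular-value assumption and the ellipticity of $P$.
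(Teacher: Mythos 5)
Your overall architecture is the right one and matches the paper's: reduce to the first-order elliptic classical operators $\sqrt[3]{\Theta},\Xi,\Pi$ on $\partial M$, use Sogge's spectral bounds, a Dong-type divergence identity over the nodal domains of $\phi_\lambda=e_\lambda|_{\partial M}$ (resp.\ $\partial_\nu e_\lambda$), and a Cauchy--Schwarz step on $\tilde Z_\lambda$. Your treatment of the ``nonlocality'' obstacle is also exactly what the paper does: the identity is stated for the genuinely local operator $\triangle^T$ acting on an arbitrary smooth function, and one then writes $\triangle^T\phi_\lambda=-a^2\lambda^2\phi_\lambda+R\phi_\lambda$ with $R\in OPS^1(\partial M)$, controlling $\|R\phi_\lambda\|_{L^1}\lesssim\lambda^{1+\epsilon}\|\phi_\lambda\|_{L^1}$ by the paper's $L^1$ quasi-boundedness lemma. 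However, two of your quantitative inputs are wrong, and with them you do not reach the exponent $(4-n)/2$.

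First, your $L^1$ lower bound is too weak. From $\|u\|_{L^2}^2\le\|u\|_{L^1}\|u\|_{L^\infty}$ and $\|u\|_{L^\infty}\lesssim\lambda^{(n-2)/2}\|u\|_{L^2}$ you only get $\|u\|_{L^1}\gtrsim\lambda^{-(n-2)/2}\|u\|_{L^2}$. Feeding this into the chain $\lambda^2\|u\|_{L^1}\lesssim|\tilde Z_\lambda|^{1/2}\bigl(\int_{\tilde Z_\lambda}|\nabla^T u|^2\bigr)^{1/2}$ together with the correct bound $\int_{\tilde Z_\lambda}|\nabla^T u|^2\lesssim\lambda^3\|u\|_{L^2}^2$ yields only $|\tilde Z_\lambda|\gtrsim\lambda^{3-n}$, which is strictly worse than $\lambda^{(4-n)/2}$ for $n>2$. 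The paper instead applies Sogge's estimate at the critical exponent $p=\frac{2n}{n-2}$ for the $(n-1)$-dimensional manifold $\partial M$ (so $\sigma=\frac{n-2}{2n}$) and interpolates to obtain $\|u\|_{L^1}\gtrsim\lambda^{-(n-2)/4}\|u\|_{L^2}$; this extra factor $\lambda^{(n-2)/4}$ is precisely what produces the final exponent $(4-n)/2$. Second, your proposed control of $\int_{\tilde Z_\lambda}|\nabla^T u|^2$ by a Burq--G\'erard--Tzvetkov restriction estimate does not work: BGT-type bounds carry constants depending on the geometry (e.g.\ curvature) of the fixed submanifold, whereas $\tilde Z_\lambda$ is a $\lambda$-dependent hypersurface over whose geometry you have no uniform control. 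The Hezari--Sogge device, which the paper follows, avoids restriction theory entirely: one substitutes $f=\sqrt{1+|\nabla^T\phi_\lambda|^2}$ into the same divergence identity, converting $\int_{\tilde Z_\lambda}|\nabla^T\phi_\lambda|^2$ into integrals over all of $\partial M$ that are bounded by $\|\triangle^T\phi_\lambda\|_{L^2}\lesssim\lambda^2\|\phi_\lambda\|_{L^2}$ and, via the Bochner identity on the closed manifold $\partial M$, by $\|(\nabla^T)^2\phi_\lambda\|_{L^2}\lesssim\lambda^2\|\phi_\lambda\|_{L^2}$, giving $\int_{\tilde Z_\lambda}|\nabla^T\phi_\lambda|^2\lesssim\lambda^3\|\phi_\lambda\|_{L^2}^2$ with no hypothesis on $\tilde Z_\lambda$ beyond $0$ being a regular value. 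You need both corrections to close the argument.
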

\begin{thm}\label{innerset}
For $e_\lambda$ satisfying \eqref{Theta}, \eqref{Xi} or \eqref{Pi}, if 0 is a regular value of $\triangle e_\lambda$, we have
\begin{equation}
\big|\hat{Z}_\lambda\big|\geq c\lambda^{\frac{2-n}{2}},
\end{equation}
where
\begin{equation}
\hat{Z}_\lambda=\{x\in M|\triangle e_\lambda=0\}.
\end{equation}
\end{thm}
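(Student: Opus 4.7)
My plan is to mimic the Sogge--Wang--Zhu argument for interior nodal sets of harmonic Steklov eigenfunctions. The key initial observation is that since $\triangle^2 e_\lambda = 0$, the function $u:=\triangle e_\lambda$ is itself harmonic in $M$, so $\hat Z_\lambda$ coincides with the nodal set of a harmonic function. In each of the three problems, the biharmonic Steklov conditions translate into a Steklov-like coupling of $u$ with a boundary trace of $e_\lambda$: for \eqref{Theta} one has $\partial_\nu u = -\lambda^3 e_\lambda$; for \eqref{Xi} one has $u = \lambda\,\partial_\nu e_\lambda$; and for \eqref{Pi}, using $e_\lambda|_{\partial M}=0$ to kill the tangential Laplacian of $e_\lambda$, a Fermi-coordinate expansion of $\triangle e_\lambda$ at $\partial M$ gives $u = \lambda\,\partial_\nu e_\lambda + b\,\partial_\nu e_\lambda$ on $\partial M$, where $b$ is a bounded function depending only on the geometry of $\partial M$ (essentially the mean curvature term).

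With $0$ a regular value of $u$, the Sogge--Zelditch distributional identity $\triangle|u| = 2|\nabla u|\,\delta_{\hat Z_\lambda}$ together with the divergence theorem yields
\begin{equation*}
2\int_{\hat Z_\lambda}|\nabla u|\, d\mathcal{H}^{n-1} \;=\; \int_{\partial M}\partial_\nu|u| \;=\; \int_{\partial M}\mathrm{sgn}(u)\,\partial_\nu u.
\end{equation*}
Feeding in the Steklov-type boundary relation above, the right-hand side becomes a positive multiple of $\lambda^{k}\,\|g\|_{L^1(\partial M)}$ for a suitable boundary datum $g\in\{e_\lambda,\partial_\nu e_\lambda\}$ and an explicit integer $k$. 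Estimating the left-hand side by $|\hat Z_\lambda|\cdot\|\nabla u\|_{L^\infty(M)}$ then gives
\begin{equation*}
|\hat Z_\lambda|\;\geq\; c\,\lambda^{k}\,\frac{\|g\|_{L^1(\partial M)}}{\|\nabla u\|_{L^\infty(M)}}.
\end{equation*}

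To convert this into the Colding--Minicozzi rate $\lambda^{(2-n)/2}$, the plan is to invoke the pseudo-differential framework already established in the preceding sections: the relevant biharmonic Steklov operator is elliptic pseudo-differential of known order, so the eigenfunctions $g$ obey Sogge-type $L^p$ estimates on the $(n-1)$-dimensional boundary $\partial M$. After normalizing $\|g\|_{L^2(\partial M)}=1$, the interpolation $\|g\|_{L^2(\partial M)}^2 \leq \|g\|_{L^1(\partial M)}\,\|g\|_{L^\infty(\partial M)}$ produces the desired lower bound on $\|g\|_{L^1(\partial M)}$, while parallel $L^\infty$ bounds on the boundary data of $u$, combined with interior gradient estimates for harmonic functions, control $\|\nabla u\|_{L^\infty(M)}$ with the matching power of $\lambda$.

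The main obstacle I anticipate is arranging these exponents so that the quotient in the displayed inequality evaluates to exactly $\lambda^{(2-n)/2-k}$; this is precisely the step where the elliptic pseudo-differential structure of the Steklov map is used at full strength, both on the boundary to bound $\|g\|_{L^\infty(\partial M)}$ and in the interior to bound $\|\nabla u\|_{L^\infty(M)}$ in terms of boundary data. A secondary technical point is that the geometric perturbation $b\,\partial_\nu e_\lambda$ appearing in case \eqref{Pi} must be absorbed as a bounded multiplier so that all constants remain uniform in $\lambda$; this should follow routinely once the pseudo-differential calculus has been set up.
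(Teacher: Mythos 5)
Your strategy for $\hat Z_\lambda$ is genuinely different from the one the paper uses for this particular theorem. The paper's Section~4 adapts the Hezari--Sogge device: it tests the integration-by-parts identity (Theorem~4.1) against $f=\sqrt{1+|\nabla\triangle e_\lambda|^2}$ to obtain an $L^2$-type bound $\int_{\hat Z_\lambda^\alpha}|\nabla\triangle e_\lambda|^2\,ds\lesssim\lambda^2\|\triangle e_\lambda\|_{L^2(\partial M)}^2$, and then closes with Cauchy--Schwarz against $\int_{\hat Z_\lambda}|\nabla\triangle e_\lambda|\gtrsim\lambda\|\triangle e_\lambda\|_{L^1(\partial M)}$, together with the $L^1/L^2$ comparison at the critical exponent $p=\tfrac{2n}{n-2}$. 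You instead bound $\int_{\hat Z_\lambda}|\nabla\triangle e_\lambda|$ from above by $|\hat Z_\lambda|\,\|\nabla\triangle e_\lambda\|_{L^\infty(M)}$, which is the Sogge--Wang--Zhu route the paper reserves for the set $Z_\lambda=\{e_\lambda=0\}$ in Section~5.

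There is, however, a genuine gap in the way you propose to control the exponents, and it is exactly the obstacle you flag at the end. The ingredients you name --- Sogge's $L^p$ bounds plus the interpolation $\|g\|_{L^2}^2\le\|g\|_{L^1}\|g\|_{L^\infty}$ --- are not strong enough to make the quotient $\lambda^{k}\|g\|_{L^1}/\|\nabla\triangle e_\lambda\|_{L^\infty(M)}$ land on $\lambda^{(2-n)/2}$. Sogge's estimates give $\|g\|_{L^\infty(\partial M)}\lesssim\lambda^{(n-2)/2}\|g\|_{L^2(\partial M)}$, and propagating this into the interior yields $\|\nabla\triangle e_\lambda\|_{L^\infty(M)}\lesssim\lambda^{(n+2)/2}\|g\|_{L^2(\partial M)}$; combining that with the $L^1$ lower bound from interpolation produces only $|\hat Z_\lambda|\gtrsim\lambda^{2-n}$, which is strictly weaker than the claimed $\lambda^{(2-n)/2}$ for $n\ge3$. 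To close the argument on your route, what is actually needed is the reverse H\"older estimate with $L^1$ on the right, $\|g\|_{L^\infty(\partial M)}\lesssim\lambda^{(n-2)/2}\|g\|_{L^1(\partial M)}$, which is an independent result (Sogge--Zelditch, \cite{SZ}) and cannot be recovered by concatenating Sogge's $L^p$ bounds with H\"older; this is precisely what the paper's Proposition~5.3 quietly invokes via its citation to \cite{SWZ} and \cite{SZ}. If you are unwilling to import that lemma, the cleaner alternative --- and the route the paper actually takes for $\hat Z_\lambda$ --- is to plug $f=\sqrt{1+|\nabla\triangle e_\lambda|^2}$ into the integration-by-parts identity, bound the resulting boundary and interior terms using the pseudo-differential $L^2$ estimates together with Reilly's formula, and finish with Cauchy--Schwarz; this avoids any sup-norm bound on $\nabla\triangle e_\lambda$ entirely.

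A smaller remark: your Fermi-coordinate derivation of the boundary relation in case \eqref{Pi} reproduces the paper's identity $\triangle e_\lambda|_{\partial M}=\partial_\nu^2 e_\lambda+H\partial_\nu e_\lambda=(\lambda+H)\partial_\nu e_\lambda$, so that part of the plan is sound, and absorbing $H\partial_\nu e_\lambda$ as an order-zero perturbation is indeed routine once the pseudo-differential calculus is in place.
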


\begin{thm}\label{realinnerset}
For $e_\lambda$ satisfying \eqref{Xi} or \eqref{Pi}, if 0 is a regular value of $e_\lambda$, we have
\begin{equation}
\big|Z_\lambda\big|\geq c\lambda^{\frac{2-n}{2}}.
\end{equation}
For $e_\lambda$ satisfying \eqref{Theta}, if 0 is a regular value of $e_\lambda$, we have
\begin{equation}
\big|Z_\lambda\big|\geq c\lambda^{\frac{-n}{2}},
\end{equation}
where
\begin{equation}
Z_\lambda=\{x\in M|e_\lambda=0\}.
\end{equation}
\end{thm}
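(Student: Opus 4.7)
The plan is to mirror the Sogge--Wang--Zhu \cite{SWZ} argument for interior nodal sets of harmonic Steklov eigenfunctions, now adapted to the biharmonic setting by exploiting the pseudo-differential structure of the boundary Steklov operators established earlier in the paper. The starting point is the integral identity arising from the distributional equality $\triangle|e_\lambda| = \text{sgn}(e_\lambda)\,\triangle e_\lambda + 2|\nabla e_\lambda|\,\sigma_{Z_\lambda}$, where $\sigma_{Z_\lambda}$ denotes surface measure on the nodal set. Integrating over $M$ and applying the divergence theorem yields
$$2\int_{Z_\lambda}|\nabla e_\lambda|\,dS \;=\; \int_{\partial M}\partial_\nu |e_\lambda|\,dS \;-\; \int_M \text{sgn}(e_\lambda)\,\triangle e_\lambda\,dx.$$
For \eqref{Xi} and \eqref{Pi}, $e_\lambda|_{\partial M}=0$ gives $\int_{\partial M}\partial_\nu|e_\lambda| = -\int_{\partial M}|\partial_\nu e_\lambda|$; for \eqref{Theta}, $\partial_\nu e_\lambda|_{\partial M}=0$ makes the boundary term vanish.

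Applying Cauchy--Schwarz $\int_{Z_\lambda}|\nabla e_\lambda|\,dS\leq |Z_\lambda|^{1/2}\|\nabla e_\lambda\|_{L^2(M)}$ reduces matters to (i) an upper bound on $\|\nabla e_\lambda\|_{L^2(M)}$ and (ii) a lower bound on the right-hand side of the identity. For (i), the Green's identity applied to $\triangle^2 e_\lambda = 0$ with the prescribed boundary conditions yields $\|\triangle e_\lambda\|_{L^2(M)}^2$ equal to a multiple of $\lambda$ (for \eqref{Xi},\eqref{Pi}) or $\lambda^3$ (for \eqref{Theta}) times the normalized boundary $L^2$ norm; combined with a Poincar\'e or biharmonic-Poisson estimate for $\|e_\lambda\|_{L^2(M)}$ this yields $\|\nabla e_\lambda\|_{L^2(M)}\lesssim \lambda^{1/2}$ for \eqref{Xi}, \eqref{Pi} and $\lesssim \lambda^{3/4}$ for \eqref{Theta}.

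For (ii), I would invoke the layer-potential analysis from the paper, which shows the boundary Steklov operators are elliptic pseudo-differential operators of order one on $\partial M$. H\"ormander-type spectral bounds give $L^\infty$ estimates of order $\lambda^{(n-2)/2}$ on the boundary eigenfunctions, which by H\"older's inequality convert into reverse $L^1$ lower bounds of order $\lambda^{-(n-2)/2}$ on $\|e_\lambda\|_{L^1(\partial M)}$ or $\|\partial_\nu e_\lambda\|_{L^1(\partial M)}$. In the \eqref{Xi}, \eqref{Pi} cases this directly lower-bounds the boundary contribution to the identity. In the \eqref{Theta} case I would convert the purely interior integral $\int_M \text{sgn}(e_\lambda)\,\triangle e_\lambda$ to a boundary integral by introducing an auxiliary $\psi$ solving $\triangle\psi = \text{sgn}(e_\lambda)-c$ with $\partial_\nu\psi=0$ on $\partial M$ (with $c$ chosen by compatibility); integrating by parts twice using $\triangle^2 e_\lambda = 0$ and the Theta boundary conditions produces a boundary integral of the form $\lambda^3 \int_{\partial M}\psi\, e_\lambda\,dS$, which is then bounded below via the $L^1$ estimate on $e_\lambda$ together with uniform elliptic bounds on $\psi|_{\partial M}$.

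The principal obstacle is producing the sharp lower bound in step (ii): one has to rule out cancellation between the boundary and interior contributions in \eqref{Xi}, \eqref{Pi}, and calibrate the auxiliary-function reduction in \eqref{Theta} so that no additional power of $\lambda$ is lost. The weaker $\lambda^{-n/2}$ rate claimed for \eqref{Theta} (as opposed to $\lambda^{(2-n)/2}$ for \eqref{Xi}, \eqref{Pi}) ultimately reflects the higher scaling $\lambda^3$ in the Theta Rayleigh quotient, which forces $\|\nabla e_\lambda\|_{L^2(M)}$ to grow one-quarter power faster.
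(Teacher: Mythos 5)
Your starting identity is the wrong one for the biharmonic setting, and the gap it opens cannot be patched the way you suggest. The formula $2\int_{Z_\lambda}|\nabla e_\lambda|\,dS=\int_{\partial M}\partial_\nu|e_\lambda|\,dS-\int_M\mathrm{sgn}(e_\lambda)\,\triangle e_\lambda\,dV$ is useful for \emph{harmonic} Steklov eigenfunctions precisely because the bulk term vanishes; here $\triangle e_\lambda\not\equiv 0$, and that bulk term is the dominant, sign-indefinite part of the right-hand side. Worse, for \eqref{Xi} and \eqref{Pi} the boundary term is $\int_{\partial M}\partial_\nu|e_\lambda|=-\|\partial_\nu e_\lambda\|_{L^1(\partial M)}\le 0$, so the reverse H\"older estimate on $\|\partial_\nu e_\lambda\|_{L^1(\partial M)}$ does not ``directly lower-bound'' anything: it controls a term of the wrong sign, and the entire positive contribution must come from $-\int_M\mathrm{sgn}(e_\lambda)\triangle e_\lambda$, for which you provide no lower bound (the trivial estimate $|\int_M\mathrm{sgn}(e_\lambda)\triangle e_\lambda|\le\|\triangle e_\lambda\|_{L^1(M)}\lesssim\lambda^{1/2}\|\partial_\nu e_\lambda\|_{L^2(\partial M)}$ goes the wrong way and in fact dominates the boundary term). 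The same defect afflicts your auxiliary-function reduction for \eqref{Theta}: carrying out the two integrations by parts gives $2\int_{Z_\lambda}|\nabla e_\lambda|=-\lambda^3\int_{\partial M}\psi\,e_\lambda$, and since $\psi$ depends on $e_\lambda$ only through $\mathrm{sgn}(e_\lambda)$ there is no mechanism forcing $-\int_{\partial M}\psi\,e_\lambda\gtrsim\|e_\lambda\|_{L^1(\partial M)}$; uniform elliptic bounds on $\psi$ yield only an upper bound on $|\int_{\partial M}\psi\,e_\lambda|$.

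The paper's proof avoids all of this by changing which function is integrated by parts over the nodal domains of $e_\lambda$: it pairs $\sigma(e_\lambda)$ with $\nabla\triangle e_\lambda$ rather than with $\nabla e_\lambda$, giving $2\int_{Z_\lambda}f\langle\nabla\triangle e_\lambda,N\rangle\,ds=\int_{\partial M}f\sigma(e_\lambda)\partial_\nu\triangle e_\lambda\,ds-\int_M\sigma(e_\lambda)\langle\nabla f,\nabla\triangle e_\lambda\rangle\,dV$ (with $\sigma(\partial_\nu e_\lambda)$ in the boundary term for \eqref{Xi}, \eqref{Pi}). With $f\equiv 1$ the bulk term vanishes identically because $\triangle^2 e_\lambda=0$, and the boundary term acquires a definite sign from the eigenvalue equation: $\partial_\nu\triangle e_\lambda=-\lambda^3 e_\lambda$ yields $\lambda^3\|e_\lambda\|_{L^1(\partial M)}$ for \eqref{Theta}, while $\partial_\nu\triangle e_\lambda=\Lambda\triangle e_\lambda=\tfrac{\lambda^2}{2}\partial_\nu e_\lambda+(\text{lower order})$ yields $\gtrsim\lambda^2\|\partial_\nu e_\lambda\|_{L^1(\partial M)}$ for \eqref{Xi}, \eqref{Pi}, with the lower-order pieces absorbed via the $L^1$ pseudo-differential bounds. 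The matching upper bound is then not Cauchy--Schwarz against $\|\nabla e_\lambda\|_{L^2(M)}$ but an $L^\infty$ gradient estimate for the \emph{harmonic} function $\triangle e_\lambda$, namely $\|\nabla\triangle e_\lambda\|_{L^\infty(M)}\lesssim\lambda^{(n+2)/2}\|\partial_\nu e_\lambda\|_{L^1(\partial M)}$ (resp.\ $\lambda^{(n+6)/2}\|e_\lambda\|_{L^1(\partial M)}$ for \eqref{Theta}), so that $\int_{Z_\lambda}|\langle\nabla\triangle e_\lambda,N\rangle|\le\|\nabla\triangle e_\lambda\|_{L^\infty(M)}|Z_\lambda|$ immediately gives $|Z_\lambda|\gtrsim\lambda^{(2-n)/2}$ (resp.\ $\lambda^{-n/2}$). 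To salvage your argument you would need to replace your co-area identity by the one for $\nabla\triangle e_\lambda$; as written, your step (ii) fails.
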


\begin{rmk}
The reader may compare theorem \ref{realinnerset} to the lower bound of interior Steklov nodal sets given in \cite{SWZ}, which is of the same order except the bound for $\big|Z_\lambda\big|$ in \eqref{Theta} case. Also, the reader may compare theorem \ref{boundaryset} to the lower bound of boundary Steklov nodal sets given in \cite{WZ}. Again, we get a lower bound with the same order.
\end{rmk}

The rest of this article is organized as follows. In section 2, we introduce related boundary operators and establish important equations for biharmonic functions. In section 3, using the method of layer potentials as in \cite{T}, we show that the boundary operators are elliptic pseudo-differential operators on $\partial M$, which is different from the proof given in \cite{L2}. By the pseudo-differential operator theory, we establish the $L^p$ estimates from the theorem in \cite{S}. From this, $L^\infty$, $L^2$, $L^1$ bounds for $|\nabla e_\lambda|$ is given on the sets which we want to find a lower bound as in \cite{WZ} and \cite{HS}. In section 4, 5 and 6, we focus on the set $\hat{Z}_\lambda$, $Z_\lambda$ and $\tilde{Z}_\lambda$ respectively and prove the theorems.

\section{Some basic properties for the biharmonic Steklov problem}

The biharmonic Steklov problems are related to the boundary operators. The eigenfunctions $e_\lambda$ in \eqref{Theta}, \eqref{Xi} and \eqref{Pi} satisfy the eigenvalue problems
\begin{equation}
\begin{split}
\Theta e_\lambda&=\lambda^3 e_\lambda,\Xi\partial_\nu e_\lambda=\lambda \partial_\nu e_\lambda,\Pi\partial_\nu e_\lambda=\lambda \partial_\nu e_\lambda,\\
\end{split}
\end{equation}
on $\partial M$, respectively, for the Dirichlet-to-Neumann-Laplacian operator $\Theta$, Neumann-to-Laplacian operator $\Xi$, Neumann-to-double-Neumann operator $\Pi$, which are defined below. For $f\in C^\infty(\partial M)$, define 
\begin{equation}
\begin{split}
\Theta f&=-\partial_\nu\triangle(K_1f)|_{\partial M},\\
\Xi f&=\triangle(K_2f)|_{\partial M},\\
\Pi f&=\partial_\nu^2(K_2f)|_{\partial M},\\
\end{split}
\end{equation}
where $K_1f=u$ is the unique biharmonic function with $u|_{\partial M}=f$, $\partial_\nu u|_{\partial M}=0$ and $K_2f=v$ is the unique biharmonic function with $v|_{\partial M}=0$, $\partial_\nu v|_{\partial M}=f$.

First, let us show the operator $\Pi$ is well defined and establish the relation between $\Xi$ and $\Pi$.
\begin{thm}
The $\partial_\nu^2$ in the definition of $\Pi$ is well defined. We have $\Xi f=\Pi f+Hf$, where $H$ is the mean curvature of $\partial M$.
\end{thm}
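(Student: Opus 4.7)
The plan is to work in boundary normal (Fermi) coordinates on a collar neighborhood of $\partial M$, writing the metric in the form $dr^2 + g_{ij}(r,x')\,dx'^i dx'^j$, where $r$ denotes signed distance to $\partial M$ and $x'$ are local coordinates on $\partial M$. Extending $\nu$ as $\pm\partial_r$ to this neighborhood gives an unambiguous meaning to $\partial_\nu^2 v$ as a second-order differential operator near the boundary. By elliptic regularity for the biharmonic Dirichlet problem, $v = K_2 f$ lies in $C^\infty(\overline M)$ whenever $f \in C^\infty(\partial M)$, so the boundary trace $\partial_\nu^2 v|_{\partial M}$ exists pointwise and is smooth. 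This establishes that $\Pi$ is well-defined on $C^\infty(\partial M)$.

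For the identity $\Xi f = \Pi f + Hf$, I would decompose the Laplace--Beltrami operator in these coordinates. A standard computation yields
\begin{equation*}
\triangle v = \partial_r^2 v + \bigl(\partial_r \log \sqrt{\det g(r)}\bigr)\,\partial_r v + \triangle_{g(r)} v,
\end{equation*}
where $\triangle_{g(r)}$ is the Laplacian of the induced metric on the level set $\{r=\mathrm{const}\}$. Restricting to $r = 0$, one has $\partial_r \log\sqrt{\det g(r)}\big|_{r=0} = H$ (with the sign convention fixed by the orientation of $\nu$), and $\triangle_{g(0)}$ coincides with the intrinsic boundary Laplacian $\triangle_{\partial M}$. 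Thus, for every $v \in C^2(\overline M)$,
\begin{equation*}
\triangle v|_{\partial M} = \partial_\nu^2 v|_{\partial M} + H\,\partial_\nu v|_{\partial M} + \triangle_{\partial M}(v|_{\partial M}).
\end{equation*}

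Applying this to $v = K_2 f$: the term $\triangle_{\partial M}(v|_{\partial M})$ vanishes since $v|_{\partial M} = 0$, and $\partial_\nu v|_{\partial M} = f$, so
\begin{equation*}
\Xi f = \triangle v|_{\partial M} = \partial_\nu^2 v|_{\partial M} + H f = \Pi f + H f,
\end{equation*}
as required. The only real care needed is in the bookkeeping of sign conventions for $\nu$ and $H$; the remainder is the standard decomposition of the Laplacian in Fermi coordinates, so I do not expect any substantive obstacle.
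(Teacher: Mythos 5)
Your derivation of the identity $\Xi f=\Pi f+Hf$ is correct and is essentially the paper's computation rewritten in coordinates: the paper decomposes $\triangle F$ at the boundary via an orthonormal frame $\{e_i\}_{i=1}^{n-1}\cup\{N\}$ and the Hessian, getting $\triangle F|_{\partial M}=\partial_\nu^2F+H\partial_\nu F+\sum_i e_ie_iF$ with the tangential second-order part killed by $F|_{\partial M}=0$; your Fermi-coordinate expansion of $\triangle$ is the same decomposition, and your handling of the sign of $H$ and of the vanishing of $\triangle_{g(0)}(v|_{\partial M})$ is fine.

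Where you fall short of the statement is the well-definedness claim. You make $\partial_\nu^2 v$ well defined by fiat, by singling out the normal-geodesic extension of $\nu$; but the content of the assertion --- and what is needed for the boundary condition $\partial_\nu^2 e_\lambda-\lambda\partial_\nu e_\lambda=0$ in problem \eqref{Pi} to be unambiguous --- is that $NNv|_{\partial M}$ does not depend on which unit extension $N$ of $\partial_\nu$ one chooses. This is the point the paper actually proves, and it is one line which you should add: $NNv=\nabla^2v(N,N)+dv(\nabla_NN)$; since $N$ is a unit field, $\nabla_NN\perp N$, so $\nabla_NN|_{\partial M}$ is tangent to $\partial M$, and $v|_{\partial M}=0$ then gives $dv(\nabla_NN)|_{\partial M}=0$; hence $NNv|_{\partial M}=\nabla^2v(N,N)|_{\partial M}$ is tensorial in $N$ and depends only on $N|_{\partial M}=\partial_\nu$. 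Note that this independence genuinely uses $v|_{\partial M}=0$, so it is not a vacuous remark. With that supplement your argument is complete.
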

\begin{proof}
Let $F$ be a smooth function on $M$ with $F|_{\partial M}=0$. Let $N$ be any unit vector field defined in a neighborhood of $\partial M$ with $N|_{\partial M}=\partial_\nu$. We have
\begin{equation}
NNF=N(dF(N))=\nabla^2F(N,N)+dF(\nabla_NN).
\end{equation}
Since $\nabla_NN\perp N$, $\nabla_NN|_{\partial M}$ is tangent to $\partial M$. Using $F|_{\partial M}=0$, we have $dF(\nabla_NN)=0$. Therefore, $NNF=\nabla^2F(N,N)$ is tensorial and only depends on $N|_{\partial M}=\partial_\nu$.

Now, let $\{e_i\}_{i=1}^{n-1}\cup\{N\}$ be an orthonormal frame in a neighborhood of the boundary. We have
\begin{equation}
\triangle F=\nabla^2F(N,N)+\sum_{i=1}^{n-1}
\nabla^2F(e_i,e_i)=\nabla^2F(N,N)+
\sum_{i=1}^{n-1}\Big(e_ie_iF-dF(\nabla_{e_i}e_i)\Big).
\end{equation}
Given that $F|_{\partial M}=0$, $e_i|_{\partial M}$ is tangent to $\partial M$, we have $e_ie_iF=0$.
\begin{equation}
\triangle F|_{\partial M}=\partial_\nu^2F-
\sum_{i=1}^{n-1}dF(\nabla_{e_i}e_i)=\partial_\nu^2F-
dF(\sum_{i=1}^{n-1}\nabla_{e_i}e_i)=\partial_\nu^2F+H\partial_\nu F.
\end{equation}
Plug in $F=K_2f$ and we can get the desired result.
\end{proof}

Next, let us recall the Green's formula for biharmonic function:
\begin{thm}
Let $u$, $v$ be biharmonic function on $M$, we have
\begin{equation}
\begin{split}
0&=\int_{\partial M}\partial_\nu\triangle uv-\triangle u\partial_\nu v+\partial_\nu u \triangle v-u\partial_\nu\triangle v,\\
\int_M(\triangle u)^2&=\int_{\partial M}\partial_\nu u \triangle u-u\partial_\nu\triangle u.\\
\end{split}
\end{equation}
\end{thm}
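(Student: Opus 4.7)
The plan is to derive both identities as straightforward consequences of Green's second identity for the Laplacian,
\[
\int_M (\Delta f)\, g - f\, (\Delta g) \,=\, \int_{\partial M} \partial_\nu f \cdot g - f\, \partial_\nu g,
\]
applied to well-chosen pairs of functions, combined with the biharmonic hypothesis $\Delta^2 u = \Delta^2 v = 0$.

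For the first identity, I would apply Green's second identity twice. First with $f = \Delta u$ and $g = v$: since $\Delta f = \Delta^2 u = 0$, the bulk integral reduces to $-\int_M (\Delta u)(\Delta v)$, yielding
\[
-\int_M (\Delta u)(\Delta v) \,=\, \int_{\partial M} \partial_\nu \Delta u \cdot v - \Delta u \cdot \partial_\nu v.
\]
Next, by symmetry (applying the same identity with the roles of $u$ and $v$ exchanged, i.e.\ $f = \Delta v$, $g = u$), I obtain
\[
-\int_M (\Delta v)(\Delta u) \,=\, \int_{\partial M} \partial_\nu \Delta v \cdot u - \Delta v \cdot \partial_\nu u.
\]
Subtracting the two identities kills the interior term $\int_M (\Delta u)(\Delta v)$, and the remaining boundary expression is exactly
\[
0 \,=\, \int_{\partial M} \partial_\nu \Delta u \cdot v - \Delta u \cdot \partial_\nu v + \partial_\nu u \cdot \Delta v - u \cdot \partial_\nu \Delta v.
\]

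For the second identity, I would apply Green's second identity just once, now with $f = u$ and $g = \Delta u$. The left-hand side becomes $\int_M (\Delta u)(\Delta u) - u\,\Delta^2 u$, and since $\Delta^2 u = 0$ this collapses to $\int_M (\Delta u)^2$. The right-hand side is then directly $\int_{\partial M} \partial_\nu u \cdot \Delta u - u \cdot \partial_\nu \Delta u$, completing the claim.

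There is no real obstacle here: both formulas are routine consequences of integration by parts once biharmonicity is used to eliminate the $\Delta^2$ terms. The only thing to be careful about is regularity, but since $u$ and $v$ are assumed biharmonic and $M$ has smooth boundary, elliptic regularity makes all boundary traces meaningful and justifies the applications of Green's identity without further comment.
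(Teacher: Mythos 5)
Your proof is correct and is essentially the paper's argument in a different packaging: the paper verifies a single pointwise divergence identity and integrates, while you assemble the same cancellation from two applications of Green's second identity (with $f=\Delta u,\ g=v$ and $f=\Delta v,\ g=u$, then with $f=u,\ g=\Delta u$). Both routes are just the divergence theorem plus biharmonicity, so there is nothing further to add.
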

\begin{proof}
Integrate the equation
\begin{equation}
\begin{split}
\triangle^2uv-u\triangle^2v&=\nabla\cdot(\nabla\triangle uv-\triangle u\nabla v+\nabla u\triangle v-u\nabla\triangle v),\\
(\triangle u)^2-u\triangle^2u&=\nabla\cdot(\nabla u\triangle u-u\nabla\triangle u).
\end{split}
\end{equation}
on $M$ and use divergence theorem.
\end{proof}
From the equation above, we can deduce that $\Theta$, $\Xi$, $\Pi$ are self-adjoint, $\Theta$ and $\Xi$ are positive.

Now, let $\hat{E}(x,y)$ be a symmetric fundamental solution to the biharmonic equation:
\begin{equation}
\triangle^2_x\hat{E}(x,y)=\delta_y(x).
\end{equation}
From the symmetry, we also have $\triangle^2_y\hat{E}(x,y)=\delta_x(y)$. We have the following Green's formula for the biharmonic functions.
\begin{thm}
If $u$ is a biharmonic function on $M$, we have
\begin{equation}
u(x)=\int_{\partial M}\Big[u\cdot\partial_\nu^y\triangle_y\hat{E}(x,y)-\partial_\nu u\cdot\triangle_y\hat{E}(x,y)+\triangle u\cdot\partial_\nu^y \hat{E}(x,y)-\partial_\nu\triangle u\cdot\hat{E}(x,y) \Big]d\sigma(y).
\end{equation}
\end{thm}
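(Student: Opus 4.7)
The natural strategy is to apply the biharmonic Green identity of the previous theorem to the pair $(u, v)$ with $v(y) = \hat{E}(x, y)$ for a fixed interior point $x \in M$. Because $v$ is singular at $y = x$, one must work on the excised domain $M_\epsilon := M \setminus \overline{B_\epsilon(x)}$, where both $u$ and $v$ are biharmonic. On $M_\epsilon$ the divergence identity
\[
\triangle^2 u \cdot v - u \triangle^2 v = \nabla \cdot(\nabla\triangle u \cdot v - \triangle u\, \nabla v + \nabla u\, \triangle v - u\, \nabla\triangle v)
\]
has vanishing left-hand side, so the divergence theorem yields
\[
0 = \int_{\partial M_\epsilon}\Big[\partial_\nu\triangle u\cdot\hat{E} - \triangle u\cdot\partial_\nu\hat{E} + \partial_\nu u\cdot\triangle_y \hat{E} - u\cdot\partial_\nu\triangle_y \hat{E}\Big]\, d\sigma(y),
\]
where $\nu$ is the outward normal to $M_\epsilon$. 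The plan is to split this integral into its contributions on $\partial M$ and on $\partial B_\epsilon(x)$ (where the outward normal of $M_\epsilon$ is $-\eta$, with $\eta$ the outward normal of the ball) and to pass to the limit $\epsilon \to 0$.

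The heart of the argument is the limit on $\partial B_\epsilon(x)$. From the asymptotics $\hat{E}(x,y)\sim c_n|x-y|^{4-n}$ (with the usual logarithmic modifications in critical dimensions) together with $|\partial B_\epsilon(x)|\sim \epsilon^{n-1}$, the three terms involving $\hat{E}$, $\partial_\eta \hat{E}$ and $\triangle_y \hat{E}$ on the inner sphere are $O(\epsilon^3)$, $O(\epsilon^2)$ and $O(\epsilon)$ respectively, and therefore vanish in the limit. The surviving piece is $\int_{\partial B_\epsilon(x)} u\cdot\partial_\eta\triangle_y \hat{E}\, d\sigma$. Since $\triangle_y\hat{E}(x,\cdot)$ is, up to a harmonic function, a fundamental solution of the Laplacian, integrating $\triangle_y^2\hat{E}=\delta_x$ over $B_\epsilon(x)$ and applying the divergence theorem gives $\int_{\partial B_\epsilon(x)}\partial_\eta\triangle_y \hat{E}\, d\sigma = 1$; combined with the continuity of $u$ at $x$, the surviving integral converges to $u(x)$.

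Accounting for the sign reversal between $\nu$ and $\eta$ on the inner sphere, the divergence identity passes in the limit to
\[
\int_{\partial M}\Big[\partial_\nu\triangle u\cdot\hat{E} - \triangle u\cdot\partial_\nu\hat{E} + \partial_\nu u\cdot\triangle_y\hat{E} - u\cdot\partial_\nu\triangle_y\hat{E}\Big]\, d\sigma = -u(x),
\]
which is exactly the claimed representation after multiplying through by $-1$ and reordering the terms. The principal obstacle is the careful asymptotic bookkeeping of the four singular integrands on the inner sphere, especially in the critical dimensions where $\hat{E}$ carries a logarithmic factor; the decisive identity $\int_{\partial B_\epsilon(x)}\partial_\eta\triangle_y\hat{E}\, d\sigma = 1$ is, however, immediate from $\triangle_y^2\hat{E}=\delta_x$, and the integrals over $\partial M$ are continuous in $\epsilon$ since their integrands remain smooth there.
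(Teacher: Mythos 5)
Your proof is correct, and it is the standard excision argument that the paper leaves implicit: the theorem is stated without proof, immediately after the divergence identity you invoke, so your derivation (apply that identity on $M\setminus\overline{B_\epsilon(x)}$, kill three inner-sphere terms by the order counts $O(\epsilon^3)$, $O(\epsilon^2)$, $O(\epsilon)$ up to logarithms, and extract $u(x)$ from the flux identity $\int_{\partial B_\epsilon(x)}\partial_\eta\triangle_y\hat{E}\,d\sigma=1$ together with continuity of $u$) is exactly the intended one. The sign bookkeeping between the outward normal of $M_\epsilon$ and that of the ball is handled correctly, so nothing is missing.
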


\section{Layer potentials}
Now, to establish the result, we need some technical results for biharmonic boundary Steklov operators. Let $M$ as above, we can extend the manifold across the boundary such that $\bar{M}\subset\Omega$, where $\Omega$ is a smooth $n$-dimensional manifold. Let $\mathcal{O}\subset\Omega$ be a precompact open neighborhood of $\bar{M}$. Start with a symmetric fundamental solution $E^\circ(x,y)$ of the Laplacian operator,
\begin{equation}
\triangle_x E^\circ(x,y)=\delta_y(x),
\end{equation}
where $E^\circ(x,y)$ is the Schwartz kernel of the operator $E^\circ(x,D)\in OPS^{-2}(\Omega)$. Now, let $\eta\in C^\infty_0(\Omega)$ be a cutoff function which is identically $1$ in $\mathcal{O}$ and $E(x,y)=\eta(x)\eta(y)E^\circ(x,y)$ be the Schwartz kernel of the compactly supported operator $E(x,D)\in OPS^{-2}(\Omega)$. We can construct the following fundamental solution for the biharmonic equation:
\begin{equation}
\hat{E}(x,y)=\int_\Omega E(x,z)E(z,y)dV(z),
\end{equation}
which satisfies
\begin{equation}
\triangle^2_x\hat{E}(x,y)=\delta_y(x),  \triangle^2_y\hat{E}(x,y)=\delta_x(y)
\end{equation}
in $\mathcal{O}$. $\hat{E}(x,y)$ is the Schwartz kernel of a compactly supported operator $\hat{E}(x,D)$, where $\hat{E}(x,D)=E(x,D)E(x,D)\in OPS^{-4}(\Omega)$. The Schwartz kernel $\hat{E}(x,y)$ is smooth off the diagonal. As $x\to y$, we have the following expansion:
\begin{equation}
\hat{E}(x,y)= c_n\Bigg\{
\begin{array}{ll}
R(x,y)+$d$(x,y)^2\log($d$(x,y))+\cdots & n=2, \\
R(x,y)+$d$(x,y)+\cdots & n=3,\\
\log($d$(x,y))+\cdots & n=4,\\
$d$(x,y)^{4-n}+\cdots & n\geq 5,\end{array}
\end{equation}
where $R(x,y)$ is smooth, in dimension $n=2$, 3, they are more significant than the part contribute to $\triangle^2_x\hat{E}(x,y)=\delta_y(x)$, but they only contribute to a smoothing operator. The function d$(x,y)$ is the distance on the manifold, and the constant
\begin{equation}
c_n=\Bigg\{
\begin{array}{ll}
\frac{1}{4\omega_1}=\frac{1}{8\pi} & n=2, \\
\frac{-1}{4\omega_3} & n=4,\\
\frac{1}{2(4-n)(2-n)\omega_{n-1}} & n=3,$ $n\geq 5,\end{array}
\end{equation}
with $\omega_n=Vol(\mathbb{S}^n)$.
For a function $f$ on $\partial M$, follow the same approach as in \cite{T}, we define the following potentials in $M$.
\begin{equation}
\begin{array}{ll}
L_1f(x)=\int_{\partial M}f(y)\hat{E}(x,y)d\sigma(y), & L_2f(x)=\int_{\partial M} f(y)\partial_{\nu,y}\hat{E}(x,y)d\sigma(y),\\
L_3f(x)=\int_{\partial M} f(y)\triangle_y\hat{E}(x,y)d\sigma(y), & L_4f(x)=\int_{\partial M} f(y)\partial_{\nu,y}\triangle_y\hat{E}(x,y)d\sigma(y).
\end{array}
\end{equation}

Given a function $u$ on $\Omega\backslash\partial M$. For $x\in\partial M$, define $u_+(x)$ and $u_-(x)$ be the limit of $u(z)$ as $z\to x$, from $z\in M$ and $z\in \Omega\backslash\bar{M}$, respectively. Now, we can find the limit of the above layer potentials on $\partial M$.
\begin{prop}
For $x\in\partial M$, we have
\begin{equation} \label{s3}
L_1f_+(x)=L_1f_-(x)=S_3f(x),
\end{equation}
\begin{equation} \label{s2}
L_2f_+(x)=L_2f_-(x)=S_2f(x),
\end{equation}
\begin{equation} \label{s1}
L_3f_+(x)=L_3f_-(x)=S_1f(x),
\end{equation}
\begin{equation} \label{n}
L_4f_\pm(x)=\pm\frac{1}{2}f(x)+Nf(x),
\end{equation}
where, for $x\in\partial M$,
\begin{equation}
S_3f(x)=\int_{\partial M}f(y)\hat{E}(x,y)d\sigma(y),
\end{equation}
\begin{equation}
S_2f(x)=\int_{\partial M} f(y)\partial_{\nu,y}\hat{E}(x,y)d\sigma(y),
\end{equation}
\begin{equation}
S_1f(x)=\int_{\partial M} f(y)\triangle_y\hat{E}(x,y)d\sigma(y),
\end{equation}
and
\begin{equation}
Nf(x)=\int_{\partial M} f(y)\partial_{\nu,y}\triangle_y\hat{E}(x,y)d\sigma(y).
\end{equation}
Furthermore, for the operators defined above, we have $S_2$, $S_3\in OPS^{-3}(\partial M)$, $S_1$, $N\in OPS^{-1}(\partial M)$. $S_1$ and $S_3$ are elliptic. The principle symbols of $S_1$ and $S_3$ are the same as that of $\frac{1}{2}\sqrt{-\triangle^T}^{-1}$ and $\frac{1}{4}\sqrt{-\triangle^T}^{-3}$, respectively, where $\triangle^T$ is the Laplacian operator on the boundary.
\end{prop}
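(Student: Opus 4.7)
The plan is to follow the layer-potential approach of Taylor \cite{T}, adapted from the Laplacian to the biharmonic setting, and to exploit the factorization $\hat{E}=E\circ E$ at the operator level. Throughout I would work in a tubular neighborhood of $\partial M$ using boundary normal coordinates, in which $\partial M$ is locally $\{x_n=0\}$, and use the expansion of $\hat{E}$ given in the text to isolate the leading singularity (the smooth remainder $R(x,y)$ only contributes a smoothing operator on $\partial M$).

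\medskip

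For the jump relations, the kernels of $L_1$, $L_2$, $L_3$ have singularities of order $d^{4-n}$, $d^{3-n}$, $d^{2-n}$ respectively, all strictly less than $n-1$, so each layer potential extends continuously across $\partial M$ and produces the stated identities \eqref{s3}--\eqref{s1}. For $L_4$, the key observation is that $\triangle_y\hat{E}(x,y)$ is itself a fundamental solution for $\triangle_y$, hence equals the Laplacian Green's function $E^\circ(x,y)$ modulo a smoothing term. Thus $L_4$ differs from the classical Laplacian double-layer potential by an operator with continuous kernel, and the classical jump relation gives $L_4 f_\pm(x)=\pm\tfrac12 f(x)+Nf(x)$.

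\medskip

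For the orders and principal symbols, the main tool is the following formula: if $P\in OPS^m(\Omega)$ has Schwartz kernel $K(x,y)$, then $f\mapsto \bigl(x'\mapsto \int_{\partial M} K(x',y)f(y)d\sigma(y)\bigr)$ is a pseudo-differential operator on $\partial M$ of order $m+1$, with principal symbol
\[
\sigma(x',\xi')=\frac{1}{2\pi}\int_{\mathbb{R}}\sigma_P(x',0;\xi',\xi_n)\,d\xi_n.
\]
Applied to $P=\hat{E}(x,D)\in OPS^{-4}(\Omega)$ of principal symbol $|\xi|^{-4}$, this gives $S_3\in OPS^{-3}(\partial M)$ with principal symbol $\tfrac{1}{2\pi}\int (|\xi'|^2+\xi_n^2)^{-2}d\xi_n=\tfrac{1}{4}|\xi'|^{-3}$. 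Since the leading part of $\triangle_y\hat{E}(x,D)$ agrees with $(-\triangle)^{-1}\in OPS^{-2}(\Omega)$ of symbol $|\xi|^{-2}$, the same formula yields $S_1\in OPS^{-1}(\partial M)$ with principal symbol $\tfrac{1}{2\pi}\int(|\xi'|^2+\xi_n^2)^{-1}d\xi_n=\tfrac{1}{2}|\xi'|^{-1}$. Both symbols are nonzero for $\xi'\neq 0$, giving ellipticity, and they match those of $\tfrac{1}{2}\sqrt{-\triangle^T}^{-1}$ and $\tfrac{1}{4}\sqrt{-\triangle^T}^{-3}$ as claimed.

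\medskip

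The subtle point, which I expect to be the main obstacle, is the order of $S_2$ and $N$. A naive application of the formula above to $\partial_{\nu,y}\hat{E}\in OPS^{-3}(\Omega)$ and $\partial_{\nu,y}\triangle_y\hat{E}\in OPS^{-1}(\Omega)$ would only give $S_2\in OPS^{-2}$ and $N\in OPS^{0}$, one order too weak. The improvement rests on a tangential cancellation: to leading order $\partial_{\nu,y}\hat{E}(x,y)=(4-n)c_n\,(y-x)\cdot\nu(y)/d(x,y)^{n-2}$, and Taylor-expanding $\partial M$ against its tangent plane at $y$ yields $(y-x)\cdot\nu(y)=O(d(x,y)^2)$ whenever $x,y\in\partial M$. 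This promotes the effective boundary kernel from $d^{3-n}$ to $d^{4-n}$, giving an operator of order $-3$ on $\partial M$. The same mechanism applied to $\partial_{\nu,y}\triangle_y\hat{E}\sim \partial_{\nu,y}E^\circ$ improves $N$ from order $0$ to order $-1$. Making these cancellations rigorous, together with controlling the curvature remainder uniformly, is the technical core of the argument.
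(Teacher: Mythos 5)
Your proposal is correct and follows essentially the same route as the paper: both rest on Taylor's restriction lemma from \cite{T} (a compactly supported $P\in OPS^{m}(\Omega)$ with $m<-1$ applied to $f\sigma$ restricts continuously to $\partial M$ as an operator in $OPS^{m+1}(\partial M)$ with symbol $\frac{1}{2\pi}\int p(x,\xi',\xi_n)e^{ix_n\xi_n}d\xi_n$), and your $\xi_n$-integrals for $S_3$ and $S_1$ reproduce the paper's $q_3$ and $q_1$. You differ only in two local steps. For the jump in $L_4$, the paper does not quote the classical Laplacian double-layer jump but recomputes it from the symbol, $\frac{1}{2\pi}\int(-i\xi_n)|\xi|^{-2}e^{ix_n\xi_n}d\xi_n=\pm\frac{1}{2}e^{\mp x_n|\xi'|}$, which yields the $\pm\frac{1}{2}f$ term directly; for the order gain of $S_2$ the paper works on the symbol side (the leading term $i\xi_n|\xi|^{-4}$ is odd in $\xi_n$, so its $\xi_n$-integral vanishes at $x_n=0$ and only the $O(|\xi|^{-4})$ remainder survives, giving $OPS^{-3}(\partial M)$), which is exactly the Fourier-dual form of your kernel cancellation $(y-x)\cdot\nu(y)=O(\mathrm{d}(x,y)^2)$ --- a cancellation the paper does invoke, but only to show integrability of the kernel of $N$. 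One small caveat: with the normalization $\triangle_x E^\circ=\delta_y$ the leading symbol of $\triangle_y\hat{E}(x,D)$ is $-|\xi|^{-2}$ rather than $|\xi|^{-2}$, so the principal symbol of $S_1$ comes out as $-\frac{1}{2}|\xi'|^{-1}$ (as in the paper's own display for $q_1$); the sign is immaterial for ellipticity and for everything downstream.
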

\begin{proof}
Following \cite{T}, if $\sigma\in\mathcal{E}'(\Omega)$ is the surface measure on $\partial M$, $f\in\mathcal{D}'(\partial M)$, we have $f\sigma\in\mathcal{E}'(\Omega)$. Now, let $p(x,D)\in OPS^m(\Omega)$, define 
\begin{equation}
v=p(x,D)(f\sigma).
\end{equation}
When $m<-1$, $v$ is continuous even across $\partial M$ and 
\begin{equation}
\begin{array}{ll}
v|_{\partial M}=Qf, & Q\in OPS^{m+1}(\partial M).
\end{array}
\end{equation}
We need to compute the principle symbols of them. At any point on $\partial M$, choose the coordinates such that $\{x_i\}_{i=1}^{n-1}$ are normal coordinates on $\partial M$ and $x_n$ is the normal direction pointing into $M$. The symbol of $Q(x,D)$ is given by
\begin{equation}
q(x_n,x',\xi')=\frac{1}{2\pi}\int p(x,\xi',\xi_n)e^{ix_n\xi_n}d\xi_n.
\end{equation}
In this coordinate, put $p_3(x,D)=\hat{E}(x,D)$, $p_2(x,D)=\hat{E}(x,D)X^*$, $p_1(x,D)=\hat{E}(x,D)\triangle$ respectively, where 
$X$ is any vector field on $\Omega$ which equals the outer normal $\nu$ on $\partial M$ and $X^*$ its formal adjoint. The corresponding principle symbols are  $p_3(x,\xi)=|\xi|^{-4}$, $p_2(x,\xi)=i\xi_n|\xi|^{-4}$, $p_1(x,\xi)=-|\xi|^2$. Use this, we can get
\begin{equation}
\begin{split}
q_3(x_n,x',\xi')&=\frac{1}{2\pi}\int^\infty_{-\infty}|\xi|^{-4}e^{ix_n\xi_n}d\xi_n=\frac{1}{4}(\frac{1}{|\xi'|^3}+\frac{|x_n|}{|\xi'|^2})e^{-|x_n\xi'|},\\
q_2(x_n,x',\xi')&=\frac{i}{2\pi}\int^\infty_{-\infty}\xi_n|\xi|^{-4}e^{ix_n\xi_n}d\xi_n=\frac{-x_n}{4|\xi'|}e^{-|x_n\xi'|},\\
q_1(x_n,x',\xi')&=\frac{-1}{2\pi}\int^\infty_{-\infty}|\xi|^{-2}e^{ix_n\xi_n}d\xi_n=-\frac{e^{-|x_n\xi'|}}{2|\xi'|},\\
\end{split}
\end{equation} 
Taking the limit as $x_n$ goes to 0. For $|\xi'|>1$ the right hand side uniformly converge. Therefore, after restricting on $\partial M$, the principle symbol of $S_3$, $S_1$ are $\frac{1}{4}|\xi'|^{-3}$, $-\frac{1}{2}|\xi'|^{-1}$ respectively. For $q_2$, since the right side converge to 0, and the term with $O(|\xi|^{-4})$ only contribute to $OPS ^{-3}(\partial M)$, we can conclude the resulting operator $S_2\in OPS ^{-3}(\partial M)$. We can establish \eqref{s3}, \eqref{s2}, \eqref{s1} and the properties of $S_3$, $S_2$, $S_1$.

Now, let us turn out attention to \eqref{n}. Put $p(x_n,\xi',\xi_n)=-i\xi_n|\xi|^{-2}+p'(x_n,\xi',\xi_n)$, where $p'(x,D)\in OPS^{-2}(\Omega)$. Since
\begin{equation}
\frac{1}{2\pi}\int^\infty_{-\infty}-i\xi_n|\xi|^{-2}d\xi_n=\Bigg\{\begin{array}{ll}
\frac{e^{-x_n|\xi'|}}{2} & x_n>0, \\
\frac{-e^{x_n|\xi'|}}{2} & x_n<0.\\
\end{array}
\end{equation}
Let $x_n$ goes to 0, the contribution of $p'$ will converge to the same limit from both positive and negative direction. Therefore, $v_\pm=Q_\pm f$, where $Q_\pm\in OPS^0(\partial M)$. $Q_\pm=\pm\frac{1}{2}I+Q'$, with $Q'\in OPS^{-1}(\partial M)$. Now, for $\partial_{\nu,y}\triangle_y\hat{E}(x,y)$, the expansion when $x$ is near $y$ is given by
\begin{equation}
\nabla_y\triangle_y\hat{E}(x,y)=-\frac{1}{\omega_{n-1}}d(x,y)^{1-n}V_{x,y}+\cdots,
\end{equation}
where $V_{x,y}$ denotes the unit vector at $y$ in the direction of the geodesic from $x$ to $y$. Therefore,
\begin{equation}
\partial_{\nu,y}\triangle_y\hat{E}(x,y)=-\frac{1}{\omega_{n-1}}d(x,y)^{1-n}\langle V_{x,y},\nu_y\rangle+\cdots.
\end{equation}
Since $\langle V_{x,y},\nu_y\rangle$ is Lipschitz on $\partial M\times\partial M$ and vanishes on the diagonal, $\partial_{\nu,y}\triangle_y\hat{E}(x,y)$ is integrable on $\partial M\times \partial M$. $Q_\pm$ has Schwartz kernels equal to $\partial_{\nu,y}\triangle_y\hat{E}(x,y)$ on the compliment of the diagonal in $\partial M\times \partial M$, together with the knowledge of the principle symbol of $Q_\pm$, we establish \eqref{n}.
\end{proof}

Now, we investigate the relation between the boundary biharmonic Steklov operators and the operators defined above. 

\begin{thm}
For the biharmonic Steklov operators, $\Theta$, $\Xi$ and $\Pi$, we have $\Xi$, $\Pi\in OPS^1(\partial M)$, $\Theta\in OPS^3(\partial M)$. All of them are elliptic. The principle symbol of $\Theta$ is equal to the principle symbol of $2\sqrt{-\triangle^T}^3$. The principle symbols of $\Xi$, $\Pi$ are equal to the principle symbol of $2\sqrt{-\triangle^T}$.
\end{thm}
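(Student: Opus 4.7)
The plan is to derive explicit (modulo lower-order) identities for $\Theta$, $\Xi$, and $\Pi$ by combining the biharmonic Green's formula from Section 2 with the trace formulas \eqref{s3}--\eqref{n} of the preceding proposition and the classical Dirichlet-to-Neumann operator $\Lambda$ of the Laplacian; the claimed orders and principal symbols will then drop out of symbolic calculus.

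For any biharmonic $u$ on $M$, label the Cauchy data $f_0 = u|_{\partial M}$, $f_1 = \partial_\nu u|_{\partial M}$, $f_2 = \triangle u|_{\partial M}$, $f_3 = \partial_\nu\triangle u|_{\partial M}$. The biharmonic Green's formula writes $u = L_4 f_0 - L_3 f_1 + L_2 f_2 - L_1 f_3$ in $M$, and taking the interior boundary trace while applying \eqref{s3}--\eqref{n} collapses to a single identity on $\partial M$:
\[
(\tfrac12 I - N)f_0 + S_1 f_1 - S_2 f_2 + S_3 f_3 = 0.
\]
Since $u$ is biharmonic, $\triangle u$ is harmonic in $M$; consequently the Laplacian Dirichlet-to-Neumann operator $\Lambda \in OPS^1(\partial M)$, elliptic with principal symbol $\sqrt{-\triangle^T}$, gives $f_3 = \Lambda f_2$. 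Substituting yields $(S_3\Lambda - S_2)f_2 = -(\tfrac12 I - N)f_0 - S_1 f_1$. By the preceding proposition, $S_3\Lambda \in OPS^{-2}$ has principal symbol $\tfrac14 |\xi'|^{-2}$ while $S_2 \in OPS^{-3}$, so $S_3\Lambda - S_2$ is elliptic in $OPS^{-2}$ and admits a parametrix $(S_3\Lambda - S_2)^{-1} \in OPS^{2}$ with principal symbol $4|\xi'|^{2}$. Hence
\[
f_2 = -(S_3\Lambda - S_2)^{-1}\bigl[(\tfrac12 I - N)f_0 + S_1 f_1\bigr].
\]

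Specializing to each Steklov problem then reads off the theorem. For \eqref{Theta} ($f_0 = f$, $f_1 = 0$), $\Theta f = -f_3 = \Lambda(S_3\Lambda - S_2)^{-1}(\tfrac12 I - N)f$, a composition of pseudo-differential operators of orders $1 + 2 + 0 = 3$, placing $\Theta \in OPS^3(\partial M)$ with principal symbol $|\xi'|\cdot 4|\xi'|^2\cdot \tfrac12 = 2|\xi'|^3 = 2\sqrt{-\triangle^T}^3$. For \eqref{Xi} ($f_0 = 0$, $f_1 = f$), $\Xi f = f_2 = -(S_3\Lambda - S_2)^{-1} S_1 f \in OPS^{2+(-1)} = OPS^1$ with principal symbol $-4|\xi'|^2 \cdot (-\tfrac12|\xi'|^{-1}) = 2|\xi'|$. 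For \eqref{Pi}, the identity $\Pi = \Xi - H$ already proved (with $H$ a smooth multiplication operator of order $0$) delivers $\Pi \in OPS^1$ with the same principal symbol $2|\xi'| = 2\sqrt{-\triangle^T}$. Ellipticity of all three follows from the non-vanishing of these principal symbols on $T^*\partial M \setminus 0$.

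The main obstacle is the careful pseudo-differential bookkeeping: verifying that $S_3\Lambda - S_2$ is elliptic of order exactly $-2$ so that its parametrix lies in $OPS^{+2}$ (rather than a higher, trivial class), and justifying $f_3 = \Lambda f_2$ modulo smoothing via harmonicity of $\triangle u$. Once these points are secured using the orders and principal symbols supplied by the preceding proposition, the principal-symbol arithmetic above is routine.
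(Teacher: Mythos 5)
Your proposal is correct and follows essentially the same route as the paper: Green's representation of the biharmonic function, the jump relations for $L_1,\dots,L_4$, the substitution $f_3=\Lambda f_2$ from harmonicity of $\triangle u$, ellipticity of $S_2-S_3\Lambda$ in $OPS^{-2}(\partial M)$, and the same principal-symbol arithmetic (the paper merely routes the $\Theta$ case through the intermediate operator $\theta f=\triangle(K_1f)|_{\partial M}$ and writes $\Theta=-\Lambda\theta$, which is your computation in two steps). The only caveat, which you already flag, is that one inverts $S_3\Lambda-S_2$ only by a parametrix, so the resulting identities for $\Theta$ and $\Xi$ hold modulo smoothing operators --- which suffices for the stated orders, ellipticity, and principal symbols.
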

\begin{proof}
For $f\in C^\infty(\partial M)$, let $u=K_1f\in C^\infty(\bar{M})$. Define operator $\theta$ on $\partial M$ to be $\theta f=\triangle u|_{\partial M}$. Since $\triangle(\triangle u)=0$, we have $\Lambda\theta f=-\Theta f$. From the Green's formula,
\begin{equation}
\begin{split}
u(x)&=\int_{\partial M}u\partial_{\nu,y}\triangle_y\hat{E}(x,y)-\partial_\nu u\triangle_y\hat{E}(x,y)+\triangle u\partial_{\nu,y}\hat{E}(x,y)-\partial_\nu\triangle u\hat{E}(x,y) d\sigma(y)\\
&=\int_{\partial M}f\partial_{\nu,y}\triangle_y\hat{E}(x,y)+\theta f\partial_{\nu,y}\hat{E}(x,y)-\Lambda\theta f\hat{E}(x,y) d\sigma(y)\\
&=L_4f(x)+L_2\theta f(x)-L_1\Lambda\theta f(x).
\end{split}
\end{equation}
for $x\in M$. Taking the limit, let $x$ goes to a boundary point, we have
\begin{equation}
f=\frac{1}{2}f+\frac{1}{2}Nf+S_2\theta f-S_3\Lambda\theta f
\end{equation}
on $\partial M$, which can be written as
\begin{equation}
(\frac{1}{2}-\frac{1}{2}N)f=(S_2-S_3\Lambda)\theta f.
\end{equation}
Note that $S_2$, $S_3\in OPS^{-3}(\partial M)$, $\Lambda\in OPS^1(\partial M)$, the principle symbols of $S_3$ and $\Lambda$ are given by $\frac{1}{4}|\xi'|^{-3}$, $|\xi'|$ respectively, we can conclude that $\theta\in OPS^2(\partial M)$ and the corresponding principle symbol is  $-2|\xi'|^2$. Therefore, $\Theta=-\Lambda\theta\in OPS^{3}(\partial M)$ with principle symbol $2|\xi'|^3$.

Now, we deal the operators $\Xi$ and $\Pi$ in a similar way. For $f\in C^\infty(\partial M)$, let $v=K_2f\in C^\infty(\bar{M})$. Using Green's formula,
\begin{equation}
\begin{split}
v(x)&=\int_{\partial M}v\partial_{\nu,y}\triangle_y\hat{E}(x,y)-\partial_\nu v\triangle_y\hat{E}(x,y)+\triangle v\partial_{\nu,y}\hat{E}(x,y)-\partial_\nu\triangle v\hat{E}(x,y) d\sigma(y)\\
&=\int_{\partial M}-f\triangle_y\hat{E}(x,y)+\Xi f\partial_{\nu,y} \hat{E}(x,y)-\Lambda\Xi f\hat{E}(x,y) d\sigma(y)\\
&=-L_3f(x)+L_2\Xi f(x)-L_1\Lambda\Xi f(x)
\end{split}
\end{equation}
for $x\in M$. Taking the limit as $x$ goes to a boundary point, we have
\begin{equation}
0=-S_1f+S_2\Xi f-S_3\Lambda\Xi f
\end{equation}
on $\partial M$, which is the same as
\begin{equation}
S_1f=(S_2-S_3\Lambda)\Xi f.
\end{equation}
Use the argument as above, we can conclude that $\Xi\in OPS^1(\partial M)$ with the principle symbol $2|\xi'|$. Finally, recall that $\Pi=\Xi+H$ on $\partial M$, we have $\Pi\in OPS^1(\partial M)$ with the same principle symbol.
\end{proof}

\begin{rmk}
The operator $\theta$ defined in the proof above may not be self-adjoint. In the proof, we only need the ellipticity of the operator.
\end{rmk}

For simplicity, in the following, we use $A\lesssim B$ to mean there exist constant $C$ independent of $\lambda$ such that $A\leq CB$ when $\lambda$ large enough. $A\approx B$ means $A\lesssim B$ and $B\lesssim A$.

One of the most important ingredients for the proof is the $L^p$ estimates for eigenfunctions. We have that $\sqrt[3]{\Theta}$, $\Xi$ and $\Pi$ are classical order 1 pseudo-differential operators with principle symbol equal to some nonzero constants times the principle symbol of $\sqrt{-\triangle^T}$. From \cite{S}, we have the following:
\begin{thm} \label{lp estimate}
For the Steklov eigenfunctions $e_\lambda$ satisfying \eqref{Theta}, $p\geq 2$, we have
\begin{equation} 
\|e_\lambda\|_{L^p(\partial M)}\lesssim \lambda^{\sigma(n,p)}\|e_\lambda\|_{L^2(\partial M)}.
\end{equation}
For the Steklov eigenfunctions $e_\lambda$ satisfying \eqref{Xi} or \eqref{Pi}, $p\geq 2$, we have
\begin{equation} 
\|\partial_\nu e_\lambda\|_{L^p(\partial M)}\lesssim \lambda^{\sigma(n,p)}\|\partial_\nu e_\lambda\|_{L^2(\partial M)}.
\end{equation}
where
\begin{equation}
\sigma(n,p)=
 \Bigg \{
\begin{array}{cc}
(n-1)(\frac{1}{2}-\frac{1}{p})-\frac{1}{2}, & \frac{2n}{n-2}\leq p\leq\infty \\
\frac{n-2}{2}(\frac{1}{2}-\frac{1}{p}), & 2\leq p\leq\frac{2n}{n-2}. \end{array}
\end{equation}
\end{thm}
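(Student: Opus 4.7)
The strategy is to reduce each of the three cases to Sogge's $L^p$ eigenfunction estimates, applied on the compact boundary manifold $\partial M$ (of dimension $n-1$) to an elliptic self-adjoint classical pseudo-differential operator of order one whose principal symbol is a positive multiple of that of $\sqrt{-\triangle^T}$. With the dimension substitution $m \mapsto n-1$, Sogge's thresholds become $\frac{2((n-1)+1)}{(n-1)-1} = \frac{2n}{n-2}$ and the exponents match $\sigma(n,p)$ exactly; the only thing to verify for each problem is that the relevant boundary trace is an eigenfunction of such an operator.

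For problems \eqref{Xi} and \eqref{Pi}, the previous theorem identifies $\Xi,\Pi \in OPS^1(\partial M)$ as elliptic with principal symbol $2|\xi'|$, and the Green's identity proved in Section 2 implies $\Xi$ (hence $\Pi = \Xi + H$ up to a zeroth-order term) is self-adjoint. Since $\partial_\nu e_\lambda$ satisfies $\Xi(\partial_\nu e_\lambda) = \lambda\, \partial_\nu e_\lambda$ or $\Pi(\partial_\nu e_\lambda) = \lambda\, \partial_\nu e_\lambda$ on $\partial M$, Sogge's theorem applies directly and yields the bound with $\lambda^{\sigma(n,p)}$.

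For problem \eqref{Theta}, $e_\lambda|_{\partial M}$ solves $\Theta e_\lambda = \lambda^3 e_\lambda$, so I need to convert an eigenvalue $\lambda^3$ of an order-three operator into an eigenvalue $\lambda$ of an order-one operator. Since $\Theta \in OPS^3(\partial M)$ was shown to be elliptic, self-adjoint and positive with principal symbol $2|\xi'|^3$, Seeley's construction of complex powers of elliptic pseudo-differential operators produces $\Theta^{1/3} \in OPS^1(\partial M)$, classical, elliptic, self-adjoint, with principal symbol $2^{1/3}|\xi'|$. Then $\Theta^{1/3} e_\lambda = \lambda e_\lambda$ on $\partial M$, and Sogge's estimate is again available.

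The main technical point is that Sogge's theorem in \cite{S} is often stated for $\sqrt{-\triangle^T}$, whereas the operators $\sqrt[3]{\Theta}$, $\Xi$, $\Pi$ differ from a scalar multiple of $\sqrt{-\triangle^T}$ only at subprincipal order. This is not a real obstacle: Sogge's proof proceeds through Hadamard-parametrix bounds for the half-wave propagator $e^{it P}$, which depend only on the principal symbol of $P$ (the subprincipal terms produce error kernels of lower order, harmless for the eigenfunction estimate). Thus the estimate transfers verbatim to any first-order elliptic self-adjoint classical $\Psi$DO with positive principal symbol proportional to $|\xi'|$, giving Theorem \ref{lp estimate}.
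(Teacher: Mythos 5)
Your proposal is correct and follows essentially the same route as the paper, which simply observes that $\sqrt[3]{\Theta}$, $\Xi$, $\Pi$ are classical first-order elliptic (self-adjoint, by the Green's identities of Section 2) pseudo-differential operators on the $(n-1)$-dimensional manifold $\partial M$ with principal symbol a nonzero constant times $|\xi'|$, and then cites Sogge \cite{S}; your dimension bookkeeping $m=n-1$ and the thresholds $\frac{2n}{n-2}$ match. The only remark worth adding is that your ``main technical point'' is not actually an issue to be resolved by hand: the spectral-cluster estimate in \cite{S} is already stated and proved for an arbitrary first-order elliptic self-adjoint classical $\Psi$DO with positive principal symbol, so it applies directly once $\Theta^{1/3}$ is produced via Seeley's complex powers.
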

Use this theorem for $p=\frac{2n}{n-2}$ and the Holder inequality, we have
\begin{equation}
\|e_\lambda\|_{L^1(\partial M)}\gtrsim\lambda^{-\frac{n-2}{4}}\|e_\lambda\|_{L^2(\partial M)}
\end{equation}
for $e_\lambda$ satisfying \eqref{Theta} and 
\begin{equation}
\|\partial_\nu e_\lambda\|_{L^1(\partial M)}\gtrsim\lambda^{-\frac{n-2}{4}}\|\partial_\nu e_\lambda\|_{L^2(\partial M)}
\end{equation}
for $e_\lambda$ satisfying \eqref{Xi}, \eqref{Pi}.

Now, we establish bounds of $L^p$ estimates when applying pseudo-differential operators to the eigenfunctions.
\begin{lem}
Fix $p\in(1,\infty)$, for any $P\in OPS^k(\partial M)$, we have
\begin{equation}
\|Pe_\lambda\|_{L^p(\partial M)}\lesssim\lambda^k\|e_\lambda\|_{L^p(\partial M)}
\end{equation}
for \eqref{Theta} and
\begin{equation}
\|P\partial_\nu e_\lambda\|_{L^p(\partial M)}\lesssim\lambda^k\|\partial_\nu e_\lambda\|_{L^p(\partial M)}
\end{equation}
for \eqref{Xi}, \eqref{Pi}.
\end{lem}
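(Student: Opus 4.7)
The plan is to factor the action of $P$ through the first-order elliptic operator that already has $\phi_\lambda$ as an eigenfunction on $\partial M$. Let $Q$ denote $\sqrt[3]{\Theta}$, $\Xi$, or $\Pi$ according as the eigenfunction satisfies \eqref{Theta}, \eqref{Xi}, or \eqref{Pi}, and write $\phi_\lambda$ for $e_\lambda$ in the first case and $\partial_\nu e_\lambda$ in the other two. By the preceding theorem and the Steklov eigenvalue relations, $Q\phi_\lambda=\lambda\phi_\lambda$, and $Q$ is a classical first-order self-adjoint elliptic element of $OPS^1(\partial M)$ whose principle symbol is a positive multiple of $|\xi'|$.

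First I would pass to a strictly positive version of $Q$: set $\tilde Q=Q+cI$ for a constant $c$ large enough that $\tilde Q$ is strictly positive elliptic, so that $\tilde Q\phi_\lambda=(\lambda+c)\phi_\lambda$. By Seeley's theorem on complex powers of a positive elliptic pseudo-differential operator, $\tilde Q^s\in OPS^s(\partial M)$ for every real $s$, and the spectral calculus gives $\tilde Q^s\phi_\lambda=(\lambda+c)^s\phi_\lambda$. Now factor
$$P=(P\tilde Q^{-k})\,\tilde Q^k,$$
so that by the symbolic calculus $P\tilde Q^{-k}\in OPS^0(\partial M)$. Any classical zero-order pseudo-differential operator on the closed manifold $\partial M$ is bounded on $L^p(\partial M)$ for every $1<p<\infty$ by standard Calder\'on--Zygmund theory, so there is a constant $C$ independent of $\lambda$ with $\|(P\tilde Q^{-k})g\|_{L^p(\partial M)}\leq C\|g\|_{L^p(\partial M)}$. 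Applying this with $g=\tilde Q^k\phi_\lambda=(\lambda+c)^k\phi_\lambda$ yields
$$\|P\phi_\lambda\|_{L^p(\partial M)}=(\lambda+c)^k\|(P\tilde Q^{-k})\phi_\lambda\|_{L^p(\partial M)}\lesssim\lambda^k\|\phi_\lambda\|_{L^p(\partial M)}$$
for $\lambda$ large, which is the required inequality in both the \eqref{Theta} and the \eqref{Xi}, \eqref{Pi} cases.

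I do not anticipate a substantive obstacle. The only technical inputs are Seeley's theorem on complex powers of positive elliptic operators, which supplies the fractional operator $\tilde Q^{-k}$ as a bona fide element of $OPS^{-k}(\partial M)$, and the $L^p$-boundedness of zero-order pseudo-differential operators. A minor point is that $\Pi$ is only known to be self-adjoint but not necessarily positive; this is harmless, since adding a large constant before taking fractional powers changes the eigenvalue by a constant shift but not the eigenfunction, and the final estimate is unaffected.
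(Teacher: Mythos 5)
Your proof is correct and is essentially the argument the paper gives: the paper factors $P=P(I+\sqrt[3]{\Theta})^{-k}(I+\sqrt[3]{\Theta})^{k}$, notes $P(I+\sqrt[3]{\Theta})^{-k}\in OPS^{0}(\partial M)$, and uses the eigenvalue relation together with $L^p$-boundedness of zero-order operators, exactly as you do with $\tilde Q=Q+cI$. Your explicit appeal to Seeley's theorem and the remark about shifting $\Pi$ by a constant to ensure positivity merely make precise what the paper leaves implicit.
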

\begin{proof}
Let $e_\lambda$ satisfies \eqref{Theta}. Since the inverse of $I+\sqrt[3]{\Theta}$ exist, we have $P(I+\sqrt[3]{\Theta})^{-k}\in OPS^0(\partial M)$. Therefore,
\begin{equation}
\begin{split}
\|P e_\lambda\|_{L^p(\partial M)}&=\|P(I+\sqrt[3]{\Theta})^{-k}(I+\sqrt[3]{\Theta})^k e_\lambda\|_{L^p(\partial M)}\\
&=(1+\lambda)^k\|P(I+\sqrt[3]{\Theta})^{-k} e_\lambda\|_{L^p(\partial M)}\lesssim\lambda^k\|e_\lambda\|_{L^p(\partial M)}.
\end{split}
\end{equation}
We can get the similar result for $e_\lambda$ satisfying \eqref{Xi}, \eqref{Pi}.
\end{proof}
For the case that $p=1$, we need to take extra care.
\begin{lem}
Let $P\in OPS^k(\partial M)$. Fix $\epsilon>0$, $e_\lambda$ satisfying \eqref{Theta}, we have
\begin{equation}
\|Pe_\lambda\|_{L^1(\partial M)}\lesssim\lambda^{k+\epsilon}\|e_\lambda\|_{L^1(\partial M)}.
\end{equation}
If $e_\lambda$ satisfying \eqref{Xi}, \eqref{Pi}, similarly,
\begin{equation}
\|P\partial_\nu e_\lambda\|_{L^1(\partial M)}\lesssim\lambda^{k+\epsilon}\|\partial_\nu e_\lambda\|_{L^1(\partial M)}.
\end{equation}
\end{lem}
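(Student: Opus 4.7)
The strategy is to bootstrap from the already-proven $L^p$ bound for $p \in (1,\infty)$ down to $p=1$ by paying an $L^p$--$L^1$ interpolation cost that can be made arbitrarily small. I will write out the plan for the \eqref{Theta} case; the argument for \eqref{Xi}, \eqref{Pi} is identical with $e_\lambda$ replaced by $\partial_\nu e_\lambda$.

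\textbf{Step 1: Reduce to an $L^p$ bound for $p$ slightly bigger than $1$.} Fix $p>1$ to be chosen later, close to $1$. Since $\partial M$ is compact, H\"older's inequality gives
\begin{equation*}
\|Pe_\lambda\|_{L^1(\partial M)} \lesssim \|Pe_\lambda\|_{L^p(\partial M)}.
\end{equation*}
By the previous lemma applied at this $p$,
\begin{equation*}
\|Pe_\lambda\|_{L^p(\partial M)} \lesssim \lambda^{k}\|e_\lambda\|_{L^p(\partial M)}.
\end{equation*}

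\textbf{Step 2: Interpolate $\|e_\lambda\|_{L^p}$ between $L^1$ and $L^2$.} Log-convexity of the $L^q$ norms yields, with $\theta = 2/p - 1 \in (0,1)$,
\begin{equation*}
\|e_\lambda\|_{L^p(\partial M)} \leq \|e_\lambda\|_{L^1(\partial M)}^{\,2/p-1}\,\|e_\lambda\|_{L^2(\partial M)}^{\,2-2/p}.
\end{equation*}
I want to re-express the right-hand side so that $\|e_\lambda\|_{L^1}$ appears to the first power.

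\textbf{Step 3: Use the reverse H\"older bound already derived from Theorem \ref{lp estimate}.} The consequence
\begin{equation*}
\|e_\lambda\|_{L^1(\partial M)} \gtrsim \lambda^{-(n-2)/4}\,\|e_\lambda\|_{L^2(\partial M)},
\end{equation*}
already stated in the excerpt, rearranges to $\|e_\lambda\|_{L^2}\lesssim \lambda^{(n-2)/4}\|e_\lambda\|_{L^1}$. Plugging into the interpolation inequality,
\begin{equation*}
\|e_\lambda\|_{L^p(\partial M)} \lesssim \|e_\lambda\|_{L^1(\partial M)}^{\,2/p-1}\cdot\lambda^{(n-2)(2-2/p)/4}\,\|e_\lambda\|_{L^1(\partial M)}^{\,2-2/p}
= \lambda^{(n-2)(2-2/p)/4}\,\|e_\lambda\|_{L^1(\partial M)}.
\end{equation*}

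\textbf{Step 4: Choose $p$ close to $1$.} Since $(n-2)(2-2/p)/4 \to 0$ as $p \to 1^{+}$, we pick $p>1$ so that this exponent is strictly less than the prescribed $\epsilon$. Combining Steps 1--3,
\begin{equation*}
\|Pe_\lambda\|_{L^1(\partial M)} \lesssim \lambda^{k}\|e_\lambda\|_{L^p(\partial M)} \lesssim \lambda^{k+\epsilon}\|e_\lambda\|_{L^1(\partial M)},
\end{equation*}
as required. The Xi/Pi case proceeds identically by applying the previous lemma and the reverse H\"older inequality to $\partial_\nu e_\lambda$.

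The only genuinely delicate point is Step 3, where I need to know a priori that $\|e_\lambda\|_{L^1}$ is not degenerate compared to $\|e_\lambda\|_{L^2}$; this is exactly what the Sogge-type $L^p$ estimate at $p = 2n/(n-2)$ provides, and it is the reason the $\lambda^\epsilon$ loss is unavoidable with this argument rather than a uniform constant as in the $p>1$ case.
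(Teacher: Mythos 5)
Your proposal is correct and follows essentially the same route as the paper: H\"older to pass from $L^1$ to $L^{1+\delta}$, the $L^p$ boundedness of $P$ from the previous lemma, log-convexity interpolation between $L^1$ and $L^2$, and the reverse H\"older consequence of the Sogge estimate, with $\delta$ (your $p-1$) chosen small enough that the $\lambda$-loss is below $\epsilon$. The only cosmetic difference is that the paper first reduces to $k=0$ via $P(I+\sqrt[3]{\Theta})^{-k}\in OPS^0(\partial M)$, whereas you invoke the previous lemma directly at general $k$; both are valid.
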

\begin{proof}
We proof the case for $k=0$ first. If $e_\lambda$ satisfies \eqref{Theta}, let $\delta>0$. By Holder's inequality,
\begin{equation}
\begin{split}
\|Pe_\lambda\|_{L^1(\partial M)}&\lesssim\|Pe_\lambda\|_{L^{1+\delta}(\partial M)}\lesssim\|e_\lambda\|_{L^{1+\delta}(\partial M)}\leq\|e_\lambda\|_{L^2(\partial M)}^\frac{2\delta}{1+\delta}\|e_\lambda\|_{L^1(\partial M)}^\frac{1-\delta}{1+\delta}\\
&\lesssim\lambda^\frac{2(n-1)\delta}{4(1+\delta)}\|e_\lambda\|_{L^1(\partial M)}.
\end{split}
\end{equation}
Choose $\delta$ such that $\frac{2(n-1)\delta}{4(1+\delta)}<\epsilon$, we can get the desired result. For general $k$, $P(I+\sqrt[3]{\Theta})^{-k}\in OPS^0(\partial M)$. We can use the same argument as in the lemma above. The case for \eqref{Xi}, \eqref{Pi} can be done in a similar manner.
\end{proof}

It's convenient to write the $L^p$ norms in terms of that of $\triangle e_\lambda$. We have the following corollary
\begin{cor}
Fix any $p\in[1,\infty)$. For \eqref{Theta}, we have
\begin{equation}
\|\triangle e_\lambda\|_{L^p(\partial M)}\approx \lambda^2\|e_\lambda\|_{L^p(\partial M)}.
\end{equation}
For \eqref{Pi}, we have
\begin{equation}
\|\triangle e_\lambda\|_{L^p(\partial M)}\approx \lambda\|\partial_\nu e_\lambda\|_{L^p(\partial M)}.
\end{equation}
\end{cor}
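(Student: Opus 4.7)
The plan is to handle the two cases separately, using the structural identities established earlier in the paper.

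The case \eqref{Pi} is essentially immediate from Section 2. Since $e_\lambda$ vanishes on $\partial M$, Theorem 2.1 (applied with $F=e_\lambda$) gives $\triangle e_\lambda|_{\partial M} = \partial_\nu^2 e_\lambda + H\partial_\nu e_\lambda$, and substituting the boundary condition $\partial_\nu^2 e_\lambda = \lambda \partial_\nu e_\lambda$ yields the pointwise identity
\begin{equation*}
\triangle e_\lambda\big|_{\partial M} = (\lambda + H)\,\partial_\nu e_\lambda.
\end{equation*}
Because $H$ is bounded on the compact boundary, once $\lambda \gg \|H\|_\infty$ the multiplier $\lambda + H$ is pointwise comparable to $\lambda$, so the $L^p$ equivalence follows without any use of pseudo-differential machinery.

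For case \eqref{Theta} the idea is to reuse the auxiliary operator $\theta$ from the proof of Theorem 3.2, defined by $\theta f = \triangle(K_1 f)|_{\partial M}$. That proof identified $\theta \in OPS^2(\partial M)$ with principal symbol $-2|\xi'|^2$, and evaluating at $f=e_\lambda$ gives $\triangle e_\lambda|_{\partial M} = \theta e_\lambda$. Since $\sqrt[3]{\Theta}^2 \in OPS^2(\partial M)$ has principal symbol $2^{2/3}|\xi'|^2$, the combination $T := \theta + 2^{1/3}\sqrt[3]{\Theta}^2$ lies in $OPS^1(\partial M)$. Using $\sqrt[3]{\Theta}^2 e_\lambda = \lambda^2 e_\lambda$, I obtain the exact splitting
\begin{equation*}
\triangle e_\lambda\big|_{\partial M} = -2^{1/3}\lambda^2 e_\lambda + T e_\lambda,
\end{equation*}
in which the principal-order term is computed \emph{exactly} and only the order-$1$ remainder $Te_\lambda$ must be estimated. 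The preceding two lemmas supply $\|Te_\lambda\|_{L^p(\partial M)} \lesssim \lambda^{1+\epsilon}\|e_\lambda\|_{L^p(\partial M)}$, with $\epsilon = 0$ when $p > 1$ and any fixed $\epsilon > 0$ when $p = 1$. The triangle inequality applied to the displayed identity then sandwiches $\|\triangle e_\lambda\|_{L^p(\partial M)}$ between $(2^{1/3}\lambda^2 \pm C\lambda^{1+\epsilon})\|e_\lambda\|_{L^p(\partial M)}$, which collapses to the desired $\approx \lambda^2 \|e_\lambda\|_{L^p(\partial M)}$ once $\lambda$ is large.

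The one step that requires care is the $p=1$ case: a naive application of the $\epsilon$-lossy lemma directly to $\theta \in OPS^2$ would only give $\lesssim \lambda^{2+\epsilon}$, which is strictly weaker than the claim. Extracting the exact leading-order action of $\theta$ on $e_\lambda$ via its comparison with $\sqrt[3]{\Theta}^2$, and applying the lossy estimate only to the lower-order remainder $T$, is precisely what removes the $\epsilon$-loss at top order. This is the main (small) obstacle; once it is cleared, the proof is a few lines.
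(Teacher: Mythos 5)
Your proof is correct and follows essentially the same route as the paper: the $T=\theta+2^{1/3}\sqrt[3]{\Theta}^2$ you introduce is exactly the paper's $P_1'=\theta+\sqrt[3]{2\Theta^2}\in OPS^1(\partial M)$, and the subsequent triangle-inequality argument with the $\epsilon$-lossy $L^1$ lemma is the same (the paper fixes $\epsilon=1/2$ at the outset rather than distinguishing $p=1$ from $p>1$, which is immaterial). For \eqref{Pi} the paper merely says ``similar''; your explicit pointwise identity $\triangle e_\lambda|_{\partial M}=(\lambda+H)\partial_\nu e_\lambda$, coming from Theorem~2.1 together with the boundary condition $\partial_\nu^2 e_\lambda=\lambda\partial_\nu e_\lambda$, is the natural way to spell that out and correctly bypasses the pseudo-differential machinery for this case.
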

\begin{proof}
Choose $\epsilon=\frac{1}{2}$. For \eqref{Theta}, we have $\triangle e_\lambda|_{\partial M}=\theta e_\lambda$. Using $\theta+\sqrt[3]{2\Theta^2}=P_1'\in OPS^1(\partial M)$, we have
\begin{equation}
\theta e_\lambda=(-\sqrt[3]{2\Theta^2}+P_1')e_\lambda=(-\sqrt[3]{2}\lambda^2+P_1')e_\lambda
\end{equation}
on $\partial M$. Therefore, 
\begin{equation}
\sqrt[3]{2}\lambda^2\|e_\lambda\|_{L^p(\partial M)}-\|P_1'e_\lambda\|_{L^p(\partial M)}\leq\|\triangle e_\lambda\|_{L^p(\partial M)}\leq\sqrt[3]{2}\lambda^2\|e_\lambda\|_{L^p(\partial M)}+\|P_1'e_\lambda\|_{L^p(\partial M)}.
\end{equation}
Since $\|P_1'e_\lambda\|_{L^p(\partial M)}\lesssim \lambda^\frac{3}{2}\|e_\lambda\|_{L^p(\partial M)}$, we can get the desired result. The case for \eqref{Pi} is similar.
\end{proof}

\section{Lower bound for the vanishing set of $\triangle e_\lambda$}
For \eqref{Xi}, we can think $\triangle e_\lambda$ as the extension of the boundary data into $M$. Thus it would be interesting to get a lower bound of its vanishing set. Let 
$\hat{Z}_\lambda^\alpha=\{x\in M|\triangle e_\lambda=\alpha\}$ be the $\alpha$-level set of 
$\triangle e_\lambda$. Define 
\begin{equation}
\sigma_\alpha(x)=
\Bigg \{
\begin{array}{cc}
1 & x>\alpha \\
0 & x=\alpha \\
-1 & x<\alpha. \end{array}
\end{equation}
We have the following equation.
\begin{thm}\label{integratebyparts}
For any $f\in C^\infty(\bar{M})$, any regular value $\alpha$ of $\triangle e_\lambda$, we have

\begin{equation}
\int_{\partial M}f\sigma_\alpha(\triangle e_\lambda)\partial_\nu\triangle e_\lambda ds-\int_M\sigma_\alpha(\triangle e_\lambda)\langle\nabla f, \nabla\triangle e_\lambda\rangle dV=2\int_{\hat{Z}_\lambda^\alpha} f|\nabla\triangle e_\lambda|ds.
\end{equation}

\end{thm}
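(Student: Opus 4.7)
The plan is to write $u=\triangle e_\lambda$, observe that $u$ is harmonic on $M$ since $\triangle^2 e_\lambda=0$, and then apply the divergence theorem separately on the open sets
\[
M_+=\{x\in M:u(x)>\alpha\},\qquad M_-=\{x\in M:u(x)<\alpha\},
\]
which together cover $M$ up to the measure-zero set $\{u=\alpha\}$. Because $\alpha$ is a regular value of $u$, the implicit function theorem makes $\hat Z_\lambda^\alpha$ a smooth hypersurface in $M$, and $M_\pm$ are open sets whose boundaries decompose as
\[
\partial M_\pm=(\partial M\cap\{\pm(u-\alpha)>0\})\,\cup\,\hat Z_\lambda^\alpha,
\]
with the intersection of the two pieces lying in the codimension-$2$ set $\partial M\cap\hat Z_\lambda^\alpha$, which contributes nothing to the surface integrals.

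Next I identify the outward normals on each piece. On the $\partial M$ portion of $\partial M_\pm$ the outward unit normal is just $\nu$. On $\hat Z_\lambda^\alpha$ regarded as part of $\partial M_+$, the outward normal points in the direction of decreasing $u$, so $\partial_{\nu_+}u=-|\nabla u|$; symmetrically, as part of $\partial M_-$ it points in the direction of increasing $u$, giving $\partial_{\nu_-}u=+|\nabla u|$. Applying Green's identity on $M_\pm$ and using $\triangle u=0$,
\[
\int_{M_\pm}\langle\nabla f,\nabla u\rangle\,dV=\int_{\partial M_\pm}f\,\partial_{\nu_\pm}u\,ds,
\]
yields
\[
\int_{M_+}\langle\nabla f,\nabla u\rangle\,dV=\int_{\partial M\cap\{u>\alpha\}} f\,\partial_\nu u\,ds-\int_{\hat Z_\lambda^\alpha}f|\nabla u|\,ds,
\]
\[
\int_{M_-}\langle\nabla f,\nabla u\rangle\,dV=\int_{\partial M\cap\{u<\alpha\}} f\,\partial_\nu u\,ds+\int_{\hat Z_\lambda^\alpha}f|\nabla u|\,ds.
\]

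Subtracting the second identity from the first reassembles the $\partial M$-integral with weight $\sigma_\alpha(u)$ and combines the two copies of the level-set integral into $-2\int_{\hat Z_\lambda^\alpha}f|\nabla u|\,ds$:
\[
\int_M\sigma_\alpha(u)\langle\nabla f,\nabla u\rangle\,dV=\int_{\partial M}f\,\sigma_\alpha(u)\,\partial_\nu u\,ds-2\int_{\hat Z_\lambda^\alpha}f|\nabla u|\,ds.
\]
Rearranging gives exactly the claimed formula.

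The only real subtlety is the regularity required to apply the divergence theorem on $M_\pm$; since $\alpha$ is assumed a regular value of $\triangle e_\lambda$, the level set $\hat Z_\lambda^\alpha$ is smooth, and its intersection with $\partial M$ (where $f$ and $u$ are both smooth up to the boundary) is lower dimensional, so it can be avoided by a standard exhaustion of $M_\pm$ by smoothly bounded subdomains if desired. The harmonicity $\triangle u=0$ that kills the bulk term is the crucial structural input provided by the biharmonic equation $\triangle^2 e_\lambda=0$.
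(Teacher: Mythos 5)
Your proof is correct and follows essentially the same route as the paper: integrate by parts (Green's identity) on the super- and sub-level sets of $\triangle e_\lambda$, use $\triangle(\triangle e_\lambda)=0$ to kill the bulk term, identify the sign of the normal derivative on the level set, and subtract. The only cosmetic difference is that the paper sums over connected components of $\{\triangle e_\lambda\gtrless\alpha\}$ while you work with the full open sets $M_\pm$ directly, which changes nothing.
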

\begin{proof}
Let $\{\hat{D}_k^{+,\alpha}\}_k$ be the collection of connected components of the set $\{\triangle e_\lambda>\alpha\}\cap M$. $\hat{Z}_k^{+,\alpha}=\partial \hat{D}_k^{+,\alpha}\cap M$, $\hat{Y}_k^{+,\alpha}=\partial \hat{D}_k^{+,\alpha}\cap \partial M$. We have
\begin{equation}
\begin{split}
\int_{\hat{D}_k^{+,\alpha}}\langle\nabla f,\nabla\triangle e_\lambda\rangle dV&=-\int_{\hat{D}_k^{+,\alpha}}f\triangle^2 e_\lambda dV-\int_{\hat{Z}_k^{+,\alpha}} f|\nabla\triangle e_\lambda|ds+\int_{\hat{Y}_k^{+,\alpha}} f\partial_\nu\triangle e_\lambda ds\\
&=-\int_{\hat{Z}_k^{+,\alpha}} f|\nabla\triangle e_\lambda|ds+\int_{\hat{Y}_k^{+,\alpha}} f\partial_\nu\triangle e_\lambda ds.\\
\end{split}
\end{equation}
Similarly, from the set $\{\triangle e_\lambda<\alpha\}\cap M$, we can define $\hat{D}_k^{-,\alpha}$, $\hat{Z}_k^{-,\alpha}$, $\hat{Y}_k^{-,\alpha}$ together with a similar equation:
\begin{equation}
-\int_{\hat{D}_k^{-,\alpha}}\langle\nabla f,\nabla\triangle e_\lambda\rangle dV=-\int_{\hat{Z}_k^{-,\alpha}} f|\nabla\triangle e_\lambda|ds-\int_{\hat{Y}_k^{-,\alpha}} f\partial_\nu \triangle e_\lambda ds.\\
\end{equation}
Summing over all the equation above and notice that almost every point on $\hat{Z}_\lambda^\alpha$ will appear once for some $\hat{Z}_k^{+,\alpha}$ and once for some $\hat{Z}_k^{-,\alpha}$, we can get the desired equation.

\end{proof}

Plug in $f=1$ in the theorem, we get the following:

\begin{cor}
There exists a constant $c$ such that for any biharmonic Steklov eigenfunciton $e_\lambda$ satisfying \eqref{Theta}, \eqref{Xi}, \eqref{Pi}, any regular value $\alpha$ of $\triangle e_\lambda$ satisfying $|\alpha|<c\lambda^\frac{2-n}{4}\|\triangle e_\lambda\|_{L^2(\partial M)}$, we have

\begin{equation}
\int_{\hat{Z}_\lambda^\alpha} |\nabla\triangle e_\lambda|ds\gtrsim\lambda\|\triangle e_\lambda\|_{L^1(\partial M)}.
\end{equation}

\end{cor}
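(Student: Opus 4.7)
The plan is to apply Theorem \ref{integratebyparts} with the test function $f\equiv 1$: the $\nabla f$ term drops out and the corollary reduces to the boundary estimate
\begin{equation*}
\int_{\partial M} \sigma_\alpha(\triangle e_\lambda)\,\partial_\nu \triangle e_\lambda \, ds \;\gtrsim\; \lambda\,\|\triangle e_\lambda\|_{L^1(\partial M)}.
\end{equation*}
The crucial observation is that $\triangle e_\lambda$ is harmonic in $M$, so if I set $h := \triangle e_\lambda|_{\partial M}$, then $\partial_\nu \triangle e_\lambda|_{\partial M} = \Lambda h$, where $\Lambda$ is the ordinary Dirichlet-to-Neumann operator.

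The main step is to show that in each of the three cases one has a decomposition $\Lambda h = c_0 \lambda\, h + Rh$, for some positive case-dependent constant $c_0$ and a remainder satisfying $\|Rh\|_{L^1(\partial M)} \lesssim \lambda^\epsilon \|h\|_{L^1(\partial M)}$. For \eqref{Theta}, the boundary condition gives $\Lambda h = -\lambda^3 e_\lambda$, and the relation $\theta + \sqrt[3]{2\Theta^2}\in OPS^1(\partial M)$ from Section~3 lets me solve $e_\lambda = -(\sqrt[3]{2}\lambda^2)^{-1}(h - P'_1 e_\lambda)$ with $P'_1\in OPS^1$, producing $c_0 = 1/\sqrt[3]{2}$ and $Rh = -(\lambda/\sqrt[3]{2})P'_1 e_\lambda$; the $L^1$ pseudodifferential lemma combined with $\|h\|_{L^1}\approx\lambda^2\|e_\lambda\|_{L^1}$ from the corollary at the end of Section~3 gives the desired bound on $Rh$. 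Cases \eqref{Xi} and \eqref{Pi} run analogously, using $\Xi, \Pi = 2\Lambda + OPS^0$ together with the eigenvalue equations $\Xi\partial_\nu e_\lambda = \lambda\partial_\nu e_\lambda$ and $\Pi\partial_\nu e_\lambda = \lambda\partial_\nu e_\lambda$ to produce $c_0 = 1/2$.

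The remaining elementary ingredient is the pointwise bound $\sigma_\alpha(h)\,h \geq |h| - 2|\alpha|$, verified by a case check on the signs of $h$ and $\alpha$. Integrating yields
\begin{equation*}
\int_{\partial M}\sigma_\alpha(h)\,\Lambda h \;\geq\; c_0\lambda\bigl(\|h\|_{L^1(\partial M)}-2|\alpha|\operatorname{Vol}(\partial M)\bigr) - \|Rh\|_{L^1(\partial M)}.
\end{equation*}
To absorb the $\alpha$-term I combine the hypothesis $|\alpha|<c\lambda^{(2-n)/4}\|\triangle e_\lambda\|_{L^2(\partial M)}$ with the lower bound $\|\triangle e_\lambda\|_{L^1(\partial M)}\gtrsim \lambda^{(2-n)/4}\|\triangle e_\lambda\|_{L^2(\partial M)}$, which follows by applying Theorem \ref{lp estimate} at $p=2n/(n-2)$ to $e_\lambda$ (or $\partial_\nu e_\lambda$) and invoking the final corollary of Section~3 to transfer $L^p$ bounds to $\triangle e_\lambda$; shrinking $c$ then forces $2|\alpha|\operatorname{Vol}(\partial M)\leq \tfrac12\|h\|_{L^1}$. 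The remaining error $\|Rh\|_{L^1}\lesssim\lambda^\epsilon\|h\|_{L^1}$ is negligible against $c_0\lambda\|h\|_{L^1}$ for any $\epsilon<1$, yielding the claim.

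The main obstacle is the opening algebraic step: extracting the scalar multiple $c_0\lambda$ from $\Lambda h$ with an $L^1$-small remainder. This uses both the principal-symbol calculus from Section~3 (matching the symbols of $\Theta^{1/3}$, $\Xi$, $\Pi$ against that of $\Lambda$) and, crucially, the $L^1$ pseudodifferential estimate for eigenfunctions, which is why one only loses $\lambda^\epsilon$ in $Rh$ rather than obtaining an exact equality.
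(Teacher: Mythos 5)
Your proposal is correct and follows essentially the same route as the paper: apply the integration-by-parts identity with $f\equiv 1$, use harmonicity of $\triangle e_\lambda$ to write $\partial_\nu\triangle e_\lambda=\Lambda(\triangle e_\lambda|_{\partial M})$, extract the leading term $c_0\lambda\,\triangle e_\lambda$ via the symbol relations $2\Lambda-\Xi,\,2\Lambda-\Pi\in OPS^0(\partial M)$ and $\theta+\sqrt[3]{2\Theta^2}\in OPS^1(\partial M)$, control the remainder with the $L^1$ pseudodifferential lemma, and absorb the $\alpha$-term using the $L^1$--$L^2$ comparison from Theorem \ref{lp estimate}. The only cosmetic difference is that the paper spells out the extra mean-curvature corrections in the \eqref{Pi} case, which you subsume under ``runs analogously''; they are indeed lower order and handled identically.
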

\begin{proof} For the eigenfunction satisfying \eqref{Xi}, we have

\begin{equation}
\begin{split}
2\int_{\hat{Z}_\lambda^\alpha} |\nabla\triangle e_\lambda|ds=&\int_{\partial M}\sigma_\alpha(\triangle e_\lambda)\partial_\nu\triangle e_\lambda ds
=\int_{\partial M}\sigma_\alpha(\triangle e_\lambda)\Lambda\triangle e_\lambda ds.
\end{split}
\end{equation}
Since $2\Lambda-\Xi=P\in OPS^0(\partial M)$, we have $\Lambda\triangle e_\lambda=\frac{1}{2}(\Xi+P)\triangle e_\lambda=\frac{1}{2}\lambda\triangle e_\lambda+\frac{1}{2}P\triangle e_\lambda$. Thus
\begin{equation} \label{Lowerbound}
\begin{split}
2\int_{\hat{Z}_\lambda^\alpha} |\nabla\triangle e_\lambda|ds=&\frac{1}{2}\int_{\partial M}\sigma_\alpha(\triangle e_\lambda)(\lambda+P)\triangle e_\lambda ds\\
=&\frac{\lambda}{2}\int_{\partial M}\sigma_\alpha(\triangle e_\lambda)\triangle e_\lambda ds+\frac{1}{2}\int_{\partial M}\sigma_\alpha(\triangle e_\lambda)P\triangle e_\lambda ds\\
\geq&\frac{\lambda}{2}\Big(\|\triangle e_\lambda\|_{L^1(\partial M)}-2\alpha|\partial M|\Big)-\frac{1}{2}\|P\triangle e_\lambda\|_{L^1(\partial M)}\\
\geq&\frac{\lambda}{2}\|\triangle e_\lambda\|_{L^1(\partial M)}-\lambda\alpha|\partial M|-\frac{C}{2}\lambda^\epsilon\|\triangle e_\lambda\|_{L^1(\partial M)}\\
\end{split}
\end{equation}
Choose $c$ which only depend on $M$ such that for any $|\alpha|<c\lambda^\frac{2-n}{4}\|\triangle e_\lambda\|_{L^2(\partial M)}$, $\alpha|\partial M|<\frac{1}{8}\|\triangle e_\lambda\|_{L^1(\partial M)}$. We can get the desired result when $\lambda$ is large.

For the eigenfunction satisfying \eqref{Pi}, use $\triangle e_\lambda=\Pi\partial_\nu e_\lambda+H\partial_\nu e_\lambda=\lambda\partial_\nu e_\lambda+H\partial_\nu e_\lambda$ and $2\Lambda-\Pi=P'\in OPS^0(\partial M)$. We have
\begin{equation}
\begin{split}
\Lambda\triangle e_\lambda&=\frac{1}{2}(\Pi+P')(\lambda\partial_\nu e_\lambda+H\partial_\nu e_\lambda)=\frac{1}{2}\lambda^2\partial_\nu e_\lambda+\frac{\lambda}{2}P'\partial_\nu e_\lambda+\frac{1}{2}\Lambda H\partial_\nu e_\lambda\\
&=\frac{1}{2}\lambda(\triangle e_\lambda-H\partial_\nu e_\lambda)+\frac{\lambda}{2}P'\partial_\nu e_\lambda+\frac{1}{2}\Lambda H\partial_\nu e_\lambda\\
&=\frac{\lambda}{2}\triangle e_\lambda+\frac{1}{2}(H\Pi+P'\Pi+\Lambda H)\partial_\nu e_\lambda.
\end{split}
\end{equation}
Notice that $\frac{1}{2}(H\Pi+P'\Pi+\Lambda H)\in OPS^1(\partial M)$. Therefore
\begin{equation}
\|\frac{1}{2}(H\Pi+P'\Pi+\Lambda H)\partial_\nu e_\lambda\|_{L^1(\partial M)}\lesssim\lambda^{1+\epsilon}\|\partial_\nu e_\lambda\|_{L^1(\partial M)}\lesssim\lambda^\epsilon\|\triangle e_\lambda\|_{L^1(\partial M)}.
\end{equation}
We can use the same approach as in \eqref{Lowerbound} to get the estimation for \eqref{Pi}.

For the eigenfunction satisfying \eqref{Theta}, we have $\triangle e_\lambda=\theta e_\lambda$ and $\partial_\nu\triangle e_\lambda=\Theta e_\lambda=-\lambda^3 e_\lambda$. $\sqrt[3]{2\Theta^2}+\theta=P_1'\in OPS^1(\partial M)$. We have
\begin{equation}
\begin{split}
\Lambda\triangle e_\lambda&=-\lambda^3 e_\lambda=-\lambda\sqrt[3]{\Theta^2}e_\lambda=\frac{\lambda\theta}{\sqrt[3]{2}}e_\lambda-\frac{\lambda P_1'}{\sqrt[3]{2}}e_\lambda=\frac{\lambda}{\sqrt[3]{2}}\triangle e_\lambda-\frac{\lambda P_1'}{\sqrt[3]{2}}e_\lambda,
\end{split}
\end{equation}
also,
\begin{equation}
\begin{split}
\|\frac{\lambda P_1'}{\sqrt[3]{2}}e_\lambda\|_{L^1(\partial M)}&\lesssim\lambda\cdot\lambda^{1+\epsilon}\|e_\lambda\|_{L^1(\partial M)}\lesssim\lambda^\epsilon\|\triangle e_\lambda\|_{L^1(\partial M)}.
\end{split}
\end{equation}
Again, we can use the same approach as in \eqref{Lowerbound} to get the estimation for \eqref{Theta} when $\lambda$ is sufficiently large.
\end{proof}

\begin{rmk}
For the operator $\Theta$, $\Pi$, the eigenfunctions are $e_\lambda|_{\partial M}$, $\partial_\nu e_\lambda$ respectively. It's more nature to write the norm in terms of the eigenfunctions. We choose $\triangle e_\lambda|_{\partial M}$ to make the result for all the cases look similar.
\end{rmk}

Next, we can plug in $f=\sqrt{1+|\nabla \triangle e_\lambda|^2}$ and get the following proposition.

\begin{prop}
For the eigenfunctions satisfying \eqref{Theta}, \eqref{Xi}, \eqref{Pi}, we have the following estimation when $\lambda$ is large enough:
\begin{equation}
\int_{\hat{Z}_\lambda^\alpha}|\nabla\triangle e_\lambda|^2 ds\lesssim \lambda^2\|\triangle e_\lambda\|_{L^2(\partial M)}^2.
\end{equation}
\end{prop}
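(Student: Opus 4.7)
The plan is to apply Theorem \ref{integratebyparts} with the test function $f=\sqrt{1+|\nabla\triangle e_\lambda|^2}$. On the zero set $\hat{Z}_\lambda^\alpha$ the integrand on the right hand side satisfies $f|\nabla\triangle e_\lambda|\geq |\nabla\triangle e_\lambda|^2$, so it suffices to show that the two terms on the left hand side are each $\lesssim \lambda^2\|\triangle e_\lambda\|_{L^2(\partial M)}^2$. Writing $u=\triangle e_\lambda$, note that $u$ is harmonic on $M$ since $e_\lambda$ is biharmonic; this fact, plus the pseudo-differential identities on the boundary already established for the corollary above, will drive the entire estimate.

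For the boundary term, since $|\sigma_\alpha|\leq 1$ the integral is controlled by $\int_{\partial M}(1+|\nabla u|)|\partial_\nu u|\,ds$, and Cauchy--Schwarz reduces matters to proving $\|\partial_\nu u\|_{L^2(\partial M)}\lesssim\lambda\|u\|_{L^2(\partial M)}$ and $\|\nabla^T u\|_{L^2(\partial M)}\lesssim\lambda\|u\|_{L^2(\partial M)}$. The first bound is already implicit in the proof of the previous corollary: in each case \eqref{Theta}, \eqref{Xi}, \eqref{Pi} one has $\partial_\nu\triangle e_\lambda=\Lambda\triangle e_\lambda$ (by harmonicity of $u$) and $\Lambda\triangle e_\lambda=\frac{\lambda}{2}\triangle e_\lambda$ modulo $OPS^0$ errors applied to lower-order data, whose $L^2$ norm is $\lesssim\lambda^{\epsilon}\|\triangle e_\lambda\|_{L^2(\partial M)}$. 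For the tangential bound, write $u$ as a pseudo-differential expression in the boundary eigenfunction (e.g.\ $u=\theta e_\lambda$ for \eqref{Theta}, or $u=(\Xi+H)\partial_\nu e_\lambda$ for \eqref{Pi}) and apply $\nabla^T\in OPS^1(\partial M)$ together with the $L^p$ lemma for eigenfunctions; the net order equals that of $u$ itself times one extra factor of $\lambda$, yielding the desired estimate after converting back via the corollary $\|\triangle e_\lambda\|_{L^p}\approx\lambda^k\|\text{eigenfunction}\|_{L^p}$.

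For the interior term, the pointwise bound $|\nabla f|\leq|\nabla^2 u|$ gives $|\langle\nabla f,\nabla u\rangle|\leq|\nabla^2 u||\nabla u|$, and Cauchy--Schwarz produces $\|\nabla^2 u\|_{L^2(M)}\|\nabla u\|_{L^2(M)}$. By harmonicity and the boundary bound on $\partial_\nu u$,
\begin{equation*}
\|\nabla u\|_{L^2(M)}^2=\int_{\partial M}u\,\partial_\nu u\,ds\leq\|u\|_{L^2(\partial M)}\|\partial_\nu u\|_{L^2(\partial M)}\lesssim\lambda\|u\|_{L^2(\partial M)}^2.
\end{equation*}
For the Hessian, apply interior/boundary elliptic regularity $\|u\|_{H^2(M)}\lesssim\|u\|_{H^{3/2}(\partial M)}$ for harmonic $u$, and characterize the $H^{3/2}(\partial M)$ norm via the elliptic order-$1$ operator $\sqrt[3]{\Theta}$ (resp.\ $\Xi$, $\Pi$); the pseudo-differential lemma then yields $\|u\|_{H^{3/2}(\partial M)}\lesssim\lambda^{3/2}\|u\|_{L^2(\partial M)}$, so $\|\nabla^2 u\|_{L^2(M)}\lesssim\lambda^{3/2}\|u\|_{L^2(\partial M)}$. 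Multiplying these two factors gives exactly $\lambda^2\|u\|_{L^2(\partial M)}^2$.

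The principal difficulty is not conceptual but bookkeeping: in each of the three problems one must choose the correct pseudo-differential identity that re-expresses $u=\triangle e_\lambda$ in terms of the relevant boundary eigenfunction (either $e_\lambda$ or $\partial_\nu e_\lambda$) so that the $L^p$ lemma applies with the right exponent of $\lambda$. Once this is set up as in the previous corollary, combining the boundary estimate with the two ingredients $\|\nabla u\|_{L^2(M)}\lesssim\lambda^{1/2}\|u\|_{L^2(\partial M)}$ and $\|\nabla^2 u\|_{L^2(M)}\lesssim\lambda^{3/2}\|u\|_{L^2(\partial M)}$ closes the argument for sufficiently large $\lambda$.
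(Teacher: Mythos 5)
Your argument is correct and follows the paper's proof almost step for step: same test function $f=\sqrt{1+|\nabla\triangle e_\lambda|^2}$, same split into a boundary term and an interior term, same Cauchy--Schwarz reductions, and the same two key inputs $\|\nabla\triangle e_\lambda\|_{L^2(M)}\lesssim\lambda^{1/2}\|\triangle e_\lambda\|_{L^2(\partial M)}$ and $\|\nabla^2\triangle e_\lambda\|_{L^2(M)}\lesssim\lambda^{3/2}\|\triangle e_\lambda\|_{L^2(\partial M)}$. The one genuine divergence is how you obtain the Hessian bound: the paper applies Reilly's formula to $\triangle e_\lambda$ and controls the resulting boundary integrals ($A(\nabla^T\cdot,\nabla^T\cdot)$, $\partial_\nu\cdot\,\triangle^T\cdot$, $H(\partial_\nu\cdot)^2$) term by term with the same pseudo-differential $L^2$ estimates, whereas you invoke the elliptic a priori estimate $\|u\|_{H^2(M)}\lesssim\|u\|_{H^{3/2}(\partial M)}$ for the harmonic function $u=\triangle e_\lambda$ and then evaluate the $H^{3/2}(\partial M)$ norm by applying an order-$3/2$ operator to the boundary eigenfunction via the $L^p$ lemma. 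Both routes land on $\lambda^{3/2}\|\triangle e_\lambda\|_{L^2(\partial M)}$; yours is slightly slicker and avoids the curvature bookkeeping, at the cost of quoting trace/regularity theory for fractional Sobolev spaces rather than the self-contained integration-by-parts identity the paper prefers. (Both proofs share the same minor, harmless imprecision in absorbing the constant $1$ from $\sqrt{1+|\nabla\triangle e_\lambda|^2}$, which is handled by homogeneity of the final inequality.)
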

\begin{proof}
Plug in $f=\sqrt{1+|\nabla\triangle e_\lambda|^2}$, we have
\begin{equation}
\begin{split}
2\int_{\hat{Z}_\lambda^\alpha}|\nabla\triangle e_\lambda|^2 ds&\leq 2\int_{\hat{Z}_\lambda^\alpha}|\nabla\triangle e_\lambda|\sqrt{1+|\nabla\triangle e_\lambda|^2} ds\\
&\leq\int_{\partial M}\sqrt{1+|\nabla\triangle e_\lambda|^2}|\partial_\nu\triangle e_\lambda|ds+\int_M|\langle\nabla \sqrt{1+|\nabla\triangle e_\lambda|^2}, \nabla\triangle e_\lambda\rangle| dV.
\end{split}
\end{equation}
We need to estimate both terms on the right hand side. First, let $e_\lambda$ satisfies \eqref{Xi}. $|\nabla\triangle e_\lambda|^2=(\partial_\nu\triangle e_\lambda)^2+|\nabla^T\triangle e_\lambda|^2$ on $\partial M$, where $\nabla^T$ denotes the gradient on $\partial M$. Since $\triangle^T\in OPS^2(\partial M)$, 
\begin{equation}
\begin{split}
\|\triangle^T\triangle e_\lambda\|_{L^2(\partial M)}=\|\triangle^T\Xi\partial_\nu e_\lambda\|_{L^2(\partial M)}\lesssim\lambda^3\|\partial_\nu e_\lambda\|_{L^2(\partial M)}\lesssim\lambda^2\|\triangle e_\lambda\|_{L^2(\partial M)}.
\end{split}
\end{equation}
We can get the following:
\begin{equation}
\begin{split}
\int_{\partial M}|\nabla^T\triangle e_\lambda|^2ds&=-\int_{\partial M} \triangle e_\lambda \triangle^T \triangle e_\lambda ds\leq\|\triangle e_\lambda\|_{L^2(\partial M)}\|\triangle^T\triangle e_\lambda\|_{L^2(\partial M)}\lesssim\lambda^2\|\triangle e_\lambda\|_{L^2(\partial M)}^2\\
\end{split}
\end{equation}
when $\lambda$ is large enough. Similarly, using $\Lambda\in OPS^1(\partial M)$, we have
\begin{equation}
\|\partial_\nu \triangle e_\lambda\|_{L^2(\partial M)}^2\lesssim\lambda^2\|\triangle e_\lambda\|_{L^2(\partial M)}^2,
\end{equation}
and therefore
\begin{equation}
\int_{\partial M}1+|\nabla\triangle e_\lambda|^2ds=\int_{\partial M}1+(\partial_\nu \triangle e_\lambda)^2+|\nabla^T\triangle e_\lambda|^2ds\lesssim \lambda^2\|\triangle e_\lambda\|_{L^2(\partial M)}^2.
\end{equation}
The estimation of the first term is given as
\begin{equation}
\int_{\partial M}\sqrt{1+|\nabla\triangle e_\lambda|^2}|\partial_\nu\triangle e_\lambda|ds\leq\|\sqrt{1+|\nabla\triangle e_\lambda|^2}\|_{L^2(\partial M)}\|\partial_\nu\triangle e_\lambda\|_{L^2(\partial M)}\lesssim \lambda^2\|\triangle e_\lambda\|_{L^2(\partial M)}^2.
\end{equation}
Now, let us estimate the second term.
\begin{equation}
\begin{split}
\int_M\big|\langle\nabla& \sqrt{1+|\nabla\triangle e_\lambda|^2}, \nabla\triangle e_\lambda\rangle\big|dV=\int_M\frac{|\nabla^2\triangle e_\lambda(\nabla\triangle e_\lambda,\nabla\triangle e_\lambda)|}{\sqrt{1+|\nabla\triangle e_\lambda|^2}}dV \\
&\leq\|\nabla^2\triangle e_\lambda\|_{L^2(M)}\|\nabla\triangle e_\lambda\|_{L^2(M)}\|\frac{\nabla\triangle e_\lambda}{\sqrt{1+|\nabla\triangle e_\lambda|^2}}\|_{L^\infty(M)}\\
&\leq\|\nabla^2\triangle e_\lambda\|_{L^2(M)}\|\nabla\triangle e_\lambda\|_{L^2(M)}.
\end{split}
\end{equation}
The $L^2$ norm of $\nabla\triangle e_\lambda$ and $\nabla^2\triangle e_\lambda$ on $M$ is needed. We have
\begin{equation}
\int_M|\nabla\triangle e_\lambda|^2dV=-\int_M\triangle e_\lambda\triangle^2 e_\lambda dV+\int_{\partial M}\triangle e_\lambda\partial_\nu\triangle e_\lambda ds\lesssim\lambda\|\triangle e_\lambda\|_{L^2(\partial M)}^2.
\end{equation}
Therefore, $\|\nabla\triangle e_\lambda\|_{L^2(M)}\lesssim\sqrt{\lambda}\|\triangle e_\lambda\|_{L^2(\partial M)}$. 

To estimate $\|\nabla^2\triangle e_\lambda\|_{L^2(M)}$, let us recall the Reilly's formula: for any smooth function $f$ on $M$, we have
\begin{equation}
\int_M |\nabla^2 f|^2+Ric(\nabla f,\nabla f)-(\triangle f)^2dV=\int_{\partial M} A(\nabla^Tf,\nabla^Tf)-2\partial_\nu f\triangle^T f+H(\partial_\nu f)^2ds.
\end{equation}
Use this formula for $\triangle e_\lambda$, we have
\begin{equation}
\begin{split}
\int_M |\nabla^2&\triangle e_\lambda|^2=-\int_MRic(\nabla\triangle e_\lambda,\nabla\triangle e_\lambda)dV\\
&+\int_{\partial M} A(\nabla^T\triangle e_\lambda,\nabla^T\triangle e_\lambda)-2\partial_\nu\triangle e_\lambda\triangle^T\triangle e_\lambda+H(\partial_\nu\triangle e_\lambda)^2ds\\
\leq& \|Ric\|_{L^\infty(M)}\|\nabla\triangle e_\lambda\|^2_{L^2(M)}+\|A\|_{L^\infty(\partial M)}\|\nabla^T\triangle e_\lambda\|^2_{L^2(\partial M)}\\
&+2\|\partial_\nu\triangle e_\lambda\|_{L^2(\partial M)}\|\triangle^T\triangle e_\lambda\|_{L^2(\partial M)}+\|H\|_{L^\infty(\partial M)}\|\partial_\nu\triangle e_\lambda\|_{L^2(\partial M)}^2\\
\lesssim&\Big(\|Ric\|_{L^\infty(M)}\cdot\lambda+(\|A\|_{L^\infty(\partial M)}+2\lambda)\cdot\lambda^2+\lambda^2\|H\|_{L^\infty(\partial M)}\Big)\|\triangle e_\lambda\|_{L^2(\partial M)}^2\\
\lesssim&\lambda^3\|\triangle e_\lambda\|_{L^2(\partial M)}^2.\\
\end{split}
\end{equation}
The estimation of the second term is given by
\begin{equation}
\begin{split}
\big|\int_M\langle\nabla \sqrt{1+|\nabla\triangle e_\lambda|^2}&, \nabla\triangle e_\lambda\rangle dV\big|\leq\|\nabla^2\triangle e_\lambda\|_{L^2(M)}\|\nabla\triangle e_\lambda\|_{L^2(M)}\\
&\lesssim\sqrt{\lambda}\sqrt{\lambda^3}\|\triangle e_\lambda\|_{L^2(\partial M)}^2=\lambda^2\|\triangle e_\lambda\|_{L^2(\partial M)}^2.
\end{split}
\end{equation}
Combine the estimations together, we have
\begin{equation}
\int_{\hat{Z}_\lambda}|\nabla\triangle e_\lambda|^2 ds\lesssim\lambda^2\|\triangle e_\lambda\|_{L^2(\partial M)}^2+\lambda^2\|\triangle e_\lambda\|_{L^2(\partial M)}^2\approx\lambda^2\|\triangle e_\lambda\|_{L^2(\partial M)}^2.
\end{equation}

For the eigenfunction satisfying \eqref{Pi}, just replace the operator $\Xi$ to $\Pi$ and we can get the desired result.

For the eigenfunction satisfying \eqref{Theta}, use similar method, we can get the following estimation on the boundary:
\begin{equation}
\begin{split}
\|\partial_\nu e_\lambda\|_{L^2(\partial M)}&=0,\|\nabla^T e_\lambda\|_{L^2(\partial M)}=\lambda\|e_\lambda\|_{L^2(\partial M)}\\
\|\triangle e_\lambda\|_{L^2(\partial M)}&, \|\triangle^T e_\lambda\|_{L^2(\partial M)}\lesssim \lambda^2\|e_\lambda\|_{L^2(\partial M)},\\
\|\nabla^T \triangle e_\lambda\|_{L^2(\partial M)}&\lesssim\lambda^3\|e_\lambda\|_{L^2(\partial M)}, \|\partial_\nu \triangle e_\lambda\|_{L^2(\partial M)}=\lambda^3\|e_\lambda\|_{L^2(\partial M)},\\
\|\triangle^T \triangle e_\lambda\|_{L^2(\partial M)}&\lesssim \lambda^4\| e_\lambda\|_{L^2(\partial M)},\\
\end{split}
\end{equation}
and following estimation on $M$:
\begin{equation}
\begin{split}
\|\nabla e_\lambda\|_{L^2(M)}&\approx\lambda^\frac{1}{2}\| e_\lambda\|_{L^2(\partial M)},\\
\|\triangle e_\lambda\|_{L^2(M)}&=\lambda^\frac{3}{2}\|e_\lambda\|_{L^2(\partial M)},\\
\|\nabla \triangle e_\lambda\|_{L^2(M)}&\lesssim\lambda^\frac{5}{2}\|e_\lambda\|_{L^2(\partial M)},\\
\|\nabla^2 \triangle e_\lambda\|_{L^2( M)}&\lesssim \lambda^\frac{7}{2}\| e_\lambda\|_{L^2(\partial M)}.\\
\end{split}
\end{equation}
From these, using $\|\triangle e_\lambda\|_{L^2(\partial M)}\approx\lambda^2\|e_\lambda\|_{L^2(\partial M)}$ when $\lambda$ is large, we can get the desired estimation.
\end{proof}
Finally, we can establish a lower bound of $|\hat{Z}_\lambda^\alpha|$.
\begin{thm}
For $e_\lambda$ satisfying \eqref{Theta}, \eqref{Xi} or \eqref{Pi}, for $|\alpha|<c\lambda^\frac{2-n}{4}\|\triangle e_\lambda\|_{L^2(M)}$, we have
\begin{equation}
\big|\hat{Z}_\lambda^\alpha\big|\gtrsim\lambda^{\frac{2-n}{2}}.
\end{equation}
\end{thm}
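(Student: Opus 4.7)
The plan is to combine the corollary of Theorem~\ref{integratebyparts} (the lower bound with $f=1$) and the preceding proposition (the $L^2$ upper bound on $|\nabla\triangle e_\lambda|$ over $\hat{Z}_\lambda^\alpha$) by means of Cauchy--Schwarz, in exactly the same spirit as the Sogge--Wang--Zhu argument for harmonic Steklov interior nodal sets.

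More precisely, I would start by applying Cauchy--Schwarz on the $(n-1)$-dimensional set $\hat{Z}_\lambda^\alpha$:
\begin{equation}
\left(\int_{\hat{Z}_\lambda^\alpha}|\nabla\triangle e_\lambda|\,ds\right)^2\;\leq\;|\hat{Z}_\lambda^\alpha|\cdot\int_{\hat{Z}_\lambda^\alpha}|\nabla\triangle e_\lambda|^2\,ds.
\end{equation}
Rearranging and plugging in the two estimates already established in this section, namely
\begin{equation}
\int_{\hat{Z}_\lambda^\alpha}|\nabla\triangle e_\lambda|\,ds\;\gtrsim\;\lambda\,\|\triangle e_\lambda\|_{L^1(\partial M)},\qquad \int_{\hat{Z}_\lambda^\alpha}|\nabla\triangle e_\lambda|^2\,ds\;\lesssim\;\lambda^2\,\|\triangle e_\lambda\|_{L^2(\partial M)}^2,
\end{equation}
I obtain
\begin{equation}
|\hat{Z}_\lambda^\alpha|\;\gtrsim\;\frac{\lambda^2\,\|\triangle e_\lambda\|_{L^1(\partial M)}^2}{\lambda^2\,\|\triangle e_\lambda\|_{L^2(\partial M)}^2}\;=\;\frac{\|\triangle e_\lambda\|_{L^1(\partial M)}^2}{\|\triangle e_\lambda\|_{L^2(\partial M)}^2}.
\end{equation}

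To conclude, I would use the Sogge-type $L^p$ bound from Theorem~\ref{lp estimate} at the endpoint $p=\frac{2n}{n-2}$, together with H\"older's inequality, to get $\|e_\lambda\|_{L^1(\partial M)}\gtrsim\lambda^{-(n-2)/4}\|e_\lambda\|_{L^2(\partial M)}$ in the \eqref{Theta} case (and the analogous bound for $\|\partial_\nu e_\lambda\|$ in the \eqref{Xi}, \eqref{Pi} cases), as already noted after Theorem~\ref{lp estimate}. The corollary identifying $\|\triangle e_\lambda\|_{L^p(\partial M)}\approx\lambda^2\|e_\lambda\|_{L^p(\partial M)}$ for \eqref{Theta} and $\|\triangle e_\lambda\|_{L^p(\partial M)}\approx\lambda\|\partial_\nu e_\lambda\|_{L^p(\partial M)}$ for \eqref{Pi} (with an analogous identity for \eqref{Xi}) then transports this ratio bound to $\triangle e_\lambda$, yielding
\begin{equation}
\frac{\|\triangle e_\lambda\|_{L^1(\partial M)}}{\|\triangle e_\lambda\|_{L^2(\partial M)}}\;\gtrsim\;\lambda^{-\frac{n-2}{4}},
\end{equation}
so that $|\hat{Z}_\lambda^\alpha|\gtrsim\lambda^{-(n-2)/2}=\lambda^{(2-n)/2}$, as desired.

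The argument is short once the two displayed inequalities are in hand, so there is no real obstacle at this stage; everything delicate has already been absorbed into the corollary (which required the operator identities $\Lambda\triangle e_\lambda\approx\tfrac{\lambda}{\sqrt[3]{2}}\triangle e_\lambda$ etc.\ modulo $OPS^0$ errors) and the preceding proposition (which required Reilly's formula to control $\|\nabla^2\triangle e_\lambda\|_{L^2(M)}$). The only minor point to check is the compatibility of the hypothesis $|\alpha|<c\lambda^{(2-n)/4}\|\triangle e_\lambda\|_{L^2(\partial M)}$ with the requirement that $\alpha$ be a regular value, which follows from Sard's theorem applied to $\triangle e_\lambda$ since the set of regular values has full measure in any interval.
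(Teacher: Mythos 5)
Your proposal is correct and follows exactly the paper's own argument: Cauchy--Schwarz on $\hat{Z}_\lambda^\alpha$ combining the corollary's lower bound for $\int|\nabla\triangle e_\lambda|\,ds$ with the proposition's upper bound for $\int|\nabla\triangle e_\lambda|^2\,ds$, then the Sogge $L^p$ estimate (transported to $\triangle e_\lambda$ via the norm-equivalence corollary) to bound $\|\triangle e_\lambda\|_{L^1(\partial M)}/\|\triangle e_\lambda\|_{L^2(\partial M)}$ from below by $\lambda^{-(n-2)/4}$.
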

\begin{proof}
We have
\begin{equation}
\lambda\|\triangle e_\lambda\|_{L^1(\partial M)}\lesssim\int_{\hat{Z}_\lambda^\alpha} |\nabla\triangle e_\lambda|ds\leq\big(\int_{\hat{Z}_\lambda^\alpha} |\nabla\triangle e_\lambda|^2ds\big)^\frac{1}{2}\big|\hat{Z}_\lambda^\alpha\big|^\frac{1}{2}\lesssim \lambda\big|\hat{Z}_\lambda^\alpha\big|^\frac{1}{2}\|\triangle e_\lambda\|_{L^2(\partial M)}.
\end{equation}
Recall that when $\lambda$ is large, $p=1,2$, $\|\triangle e_\lambda\|_{L^p(\partial M)}\approx\lambda\|\partial_\nu e_\lambda\|_{L^p(\partial M)}$ for \eqref{Pi},  $\|\triangle e_\lambda\|_{L^p(\partial M)}\approx\lambda^2\|e_\lambda\|_{L^p(\partial M)}$ for \eqref{Theta}. Using the $L^p$ estimate \eqref{lp estimate} for the eigenfunctions, we have
\begin{equation}
\|\triangle e_\lambda\|_{L^1(\partial M)}\gtrsim\lambda^{-\frac{n-2}{4}}\|\triangle e_\lambda\|_{L^2(\partial M)}.
\end{equation}
Therefore,
\begin{equation}
\lambda^{\frac{2-n}{4}}\lesssim\big|\hat{Z}_\lambda^\alpha\big|^\frac{1}{2},
\end{equation}
which is the desired result.
\end{proof}
Plug in $\alpha=0$, we have the lower bound for the vanishing set of $\triangle e_\lambda$ as in theorem \ref{innerset}.

\section{Lower bound for the interior nodal set}

In this section, we get an lower bound for the interior nodal set. For problem \eqref{Xi}, \eqref{Pi}, the $\alpha$-level set is unstable near the boundary, since $e_\lambda$ vanishes on the boundary. For simplicity, we only consider the nodal set in this section. Let $Z_\lambda=\{x\in M|e_\lambda=0\}$ and $\sigma(x)=\sigma_0(x)$. 

We have the following equations.
\begin{thm}
For the problem \eqref{Theta}, let $f\in C^\infty(\bar{M})$, if 0 is a regular value of $e_\lambda$, we have
\begin{equation}
\int_{\partial M}f\sigma(e_\lambda)\partial_\nu\triangle e_\lambda ds-\int_M\sigma(e_\lambda)\langle\nabla f, \nabla\triangle e_\lambda\rangle dV=2\int_{Z_\lambda} f\langle\nabla\triangle e_\lambda,N\rangle ds.
\end{equation}
For the problem \eqref{Xi}, \eqref{Pi}, if 0 is a regular value of $e_\lambda$, we have
\begin{equation}
-\int_{\partial M}f\sigma(\partial_\nu e_\lambda)\partial_\nu\triangle e_\lambda ds-\int_M\sigma(e_\lambda)\langle\nabla f, \nabla\triangle e_\lambda\rangle dV=2\int_{Z_\lambda} f\langle\nabla\triangle e_\lambda,N\rangle ds,
\end{equation}
where $N$ on $Z_\lambda$ is defined to be the unit normal $\frac{\nabla e_\lambda}{|\nabla e_\lambda|}$.
\end{thm}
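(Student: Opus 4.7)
The plan is to mirror the proof of Theorem~\ref{integratebyparts}, but to split $M$ according to the sign of $e_\lambda$ rather than according to the level sets of $\triangle e_\lambda$. Let $\{D_k^+\}_k$ and $\{D_k^-\}_k$ be the connected components of $\{e_\lambda>0\}\cap M$ and $\{e_\lambda<0\}\cap M$, and set $Z_k^\pm=\partial D_k^\pm\cap M$, $Y_k^\pm=\partial D_k^\pm\cap\partial M$. Because $0$ is assumed to be a regular value of $e_\lambda$ in $M$, each $Z_k^\pm$ is a smooth hypersurface where $\nabla e_\lambda\neq 0$; the outward unit normal to $D_k^+$ along $Z_k^+$ is $-N=-\nabla e_\lambda/|\nabla e_\lambda|$, and the outward normal to $D_k^-$ along $Z_k^-$ is $+N$.

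Using $\triangle^2 e_\lambda=0$, integration by parts on each $D_k^+$ gives
\begin{equation*}
\int_{D_k^+}\langle\nabla f,\nabla\triangle e_\lambda\rangle\,dV=-\int_{Z_k^+}f\langle\nabla\triangle e_\lambda,N\rangle\,ds+\int_{Y_k^+}f\,\partial_\nu\triangle e_\lambda\,ds,
\end{equation*}
and similarly on each $D_k^-$ with the sign of the $Z_k^-$-term flipped. Subtracting the $D_k^-$ identity from the $D_k^+$ identity and summing over $k$, almost every point of $Z_\lambda$ sits on exactly one $Z_k^+$ and one $Z_k^-$, so the two families of interior boundary terms add and yield $-2\int_{Z_\lambda}f\langle\nabla\triangle e_\lambda,N\rangle\,ds$, while the volume integrals collapse to $\int_M\sigma(e_\lambda)\langle\nabla f,\nabla\triangle e_\lambda\rangle\,dV$. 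Rearrangement then produces the stated identity, provided one correctly identifies the net contribution of the $Y_k^\pm$.

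The remaining issue, which I expect to be the only subtle point, is how the regions $D_k^\pm$ meet $\partial M$. For problem~\eqref{Theta} we have $\partial_\nu e_\lambda=0$ on $\partial M$ but $e_\lambda$ is unconstrained there, so the sets $Y^\pm:=\bigcup_k Y_k^\pm$ simply cover $\{\pm e_\lambda>0\}\cap\partial M$, and the boundary difference is $\int_{\partial M}\sigma(e_\lambda)f\,\partial_\nu\triangle e_\lambda\,ds$, giving the first formula. For \eqref{Xi} and \eqref{Pi} we have $e_\lambda\equiv 0$ on $\partial M$, so $\sigma(e_\lambda)|_{\partial M}$ is meaningless; here the regularity hypothesis on $\partial_\nu e_\lambda$ saves the argument. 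The Taylor expansion $e_\lambda(x_0-t\nu)\approx -t\,\partial_\nu e_\lambda(x_0)$ for $x_0\in\partial M$ and small $t>0$ shows that, outside a lower-dimensional exceptional set, $Y^+=\{\partial_\nu e_\lambda<0\}\cap\partial M$ and $Y^-=\{\partial_\nu e_\lambda>0\}\cap\partial M$; thus the boundary difference becomes $-\int_{\partial M}\sigma(\partial_\nu e_\lambda)\,f\,\partial_\nu\triangle e_\lambda\,ds$, which after rearrangement is the second formula. The main obstacle is precisely making this trace accounting rigorous: one must verify that the pieces $Y_k^\pm$ partition $\partial M$ up to a set of $(n-1)$-measure zero (which is where the regularity of $0$ as a value of $\partial_\nu e_\lambda$ enters), and that the divergence theorem applies on the possibly irregular domains $D_k^\pm$, which is standard once their boundary is a union of a smooth hypersurface and a smooth piece of $\partial M$ meeting transversally away from a negligible set.
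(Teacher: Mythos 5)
Your proposal is correct and follows essentially the same route as the paper, which simply adapts the proof of Theorem~\ref{integratebyparts} by decomposing $M$ into the connected components of $\{e_\lambda>0\}$ and $\{e_\lambda<0\}$ (the paper's one-line proof even miswrites this as $\{\triangle e_\lambda>0\}$). Your careful accounting of the boundary pieces $Y_k^\pm$ via the Taylor expansion $e_\lambda(x_0-t\nu)\approx -t\,\partial_\nu e_\lambda(x_0)$, which explains the sign flip to $-\int_{\partial M}\sigma(\partial_\nu e_\lambda)f\,\partial_\nu\triangle e_\lambda\,ds$ in the \eqref{Xi}, \eqref{Pi} cases, is a detail the paper leaves entirely implicit.
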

\begin{proof}
The result follows by replacing $\{D_k^{+,\alpha}\}_k$ to be the collection of connected components of the set $\{\triangle e_\lambda>0\}$ in the Theorem \ref{integratebyparts}.
\end{proof}

Plug in $f=1$ in the theorem, we get the following:
\begin{cor}
There exists a constant $c$ such that for any biharmonic Steklov eigenfunciton $e_\lambda$ satisfying \eqref{Theta}, with 0 as a regular value, we have
\begin{equation}
\int_{Z_\lambda} |\langle\nabla\triangle e_\lambda,N\rangle|ds\geq\frac{\lambda^3}{2}\|e_\lambda\|_{L^1(\partial M)}.
\end{equation}
For any eigenfunction satisfying \eqref{Pi} or \eqref{Xi}, with 0 as a regular value, we have
\begin{equation}\label{lowerbound}
\int_{Z_\lambda} |\langle\nabla\triangle e_\lambda,N\rangle|ds\gtrsim\lambda^2\|\partial_\nu e_\lambda\|_{L^1(\partial M)}.
\end{equation}
\end{cor}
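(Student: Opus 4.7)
The plan is to set $f=1$ in the integration-by-parts identity stated just before the corollary. With this choice the volume integral drops out (since $\nabla f = 0$), and the formula collapses to
\begin{equation*}
\int_{\partial M}\sigma(e_\lambda)\,\partial_\nu\triangle e_\lambda\,ds \;=\; 2\int_{Z_\lambda}\langle\nabla\triangle e_\lambda,N\rangle\,ds
\end{equation*}
in the \eqref{Theta} case, and analogously with $\sigma(\partial_\nu e_\lambda)$ on the left for the \eqref{Xi}, \eqref{Pi} cases. So the task reduces entirely to producing a lower bound for the boundary integral on the left, after which one takes absolute values and bounds $|\int_{Z_\lambda}\langle\nabla\triangle e_\lambda,N\rangle\,ds|$ above by $\int_{Z_\lambda}|\langle\nabla\triangle e_\lambda,N\rangle|\,ds$.

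For problem \eqref{Theta} this is immediate: the boundary condition reads $\partial_\nu\triangle e_\lambda=-\lambda^3 e_\lambda$, so the boundary integral equals $-\lambda^3\int_{\partial M}\sigma(e_\lambda)e_\lambda\,ds = -\lambda^3\|e_\lambda\|_{L^1(\partial M)}$, which yields the stated inequality with constant $1/2$.

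The main work, and the only place real analysis enters, is the \eqref{Xi}, \eqref{Pi} case. Here I would exploit that $\triangle e_\lambda$ is harmonic in $M$, so $\partial_\nu\triangle e_\lambda = \Lambda(\triangle e_\lambda|_{\partial M})$. For \eqref{Xi}, the boundary condition gives $\triangle e_\lambda|_{\partial M} = \lambda\partial_\nu e_\lambda$, and the identity $\Lambda\triangle e_\lambda = \tfrac{\lambda}{2}\triangle e_\lambda + \tfrac{1}{2}P\triangle e_\lambda$ with $P=2\Lambda-\Xi \in OPS^0(\partial M)$ (derived in the previous section) rewrites $\partial_\nu\triangle e_\lambda = \tfrac{\lambda^2}{2}\partial_\nu e_\lambda + \tfrac{\lambda}{2}P\partial_\nu e_\lambda$. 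Multiplying by $\sigma(\partial_\nu e_\lambda)$ and integrating, the leading term contributes exactly $\tfrac{\lambda^2}{2}\|\partial_\nu e_\lambda\|_{L^1(\partial M)}$, while the remainder is controlled by $\tfrac{\lambda}{2}\|P\partial_\nu e_\lambda\|_{L^1(\partial M)}\lesssim \lambda^{1+\epsilon}\|\partial_\nu e_\lambda\|_{L^1(\partial M)}$ using the $L^1$ pseudo-differential bound from Section 3 with any small $\epsilon$. Absorbing this lower-order term into the main one for $\lambda$ large gives the required $\lambda^2\|\partial_\nu e_\lambda\|_{L^1(\partial M)}$ lower bound. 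The \eqref{Pi} case is handled identically, using instead $\triangle e_\lambda = \Pi\partial_\nu e_\lambda + H\partial_\nu e_\lambda$ and $P'=2\Lambda-\Pi\in OPS^0(\partial M)$; the extra $H\partial_\nu e_\lambda$ term only adds a further $OPS^1$ remainder bounded by $\lambda^{1+\epsilon}\|\partial_\nu e_\lambda\|_{L^1(\partial M)}$ and is absorbed in the same way.

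The key obstacle is not conceptual but bookkeeping: one must ensure that all the $OPS^0$ and $OPS^1$ remainders that appear after conjugating $\Lambda$ against the Steklov boundary conditions act on $\partial_\nu e_\lambda$ and not on $\triangle e_\lambda$, since the $L^1$-to-$L^1$ bounds in the previous section are available only for the eigenfunction itself (or its normal derivative). Fortunately this is exactly the reduction carried out in the analogous argument for $\hat{Z}_\lambda^\alpha$, so I would simply mirror that computation.
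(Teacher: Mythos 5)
Your proposal is correct and follows essentially the same route as the paper: plug $f=1$ into the preceding identity, use $\partial_\nu\triangle e_\lambda=-\lambda^3 e_\lambda$ directly for \eqref{Theta}, and for \eqref{Xi}, \eqref{Pi} write $\partial_\nu\triangle e_\lambda=\Lambda\triangle e_\lambda=\tfrac{\lambda}{2}\triangle e_\lambda+(\text{lower order acting on }\partial_\nu e_\lambda)$ and absorb the $O(\lambda^{1+\epsilon})$ remainder. The paper simply cites this computation from the $\hat{Z}_\lambda^\alpha$ section rather than repeating it, which is exactly the mirroring you describe.
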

\begin{proof} For the eigenfunction satisfying \eqref{Theta}, we have
\begin{equation}
\begin{split}
2\int_{Z_\lambda} |\langle\nabla\triangle e_\lambda,N\rangle|ds\geq&-2\int_{Z_\lambda} \langle\nabla\triangle e_\lambda, N\rangle ds=-\int_{\partial M}\sigma(e_\lambda)\partial_\nu\triangle e_\lambda ds\\
=&\int_{\partial M}\sigma(e_\lambda)\lambda^3 e_\lambda ds=\lambda^3\int_{\partial M}|e_\lambda| ds=\lambda^3 \|e_\lambda\|_{L^1(\partial M)}.
\end{split}
\end{equation}
For the eigenfunction of satisfying \eqref{Xi} or \eqref{Pi}, we have
\begin{equation}
\begin{split}
2\int_{Z_\lambda} |\langle\nabla\triangle e_\lambda,N\rangle|ds\geq&-2\int_{Z_\lambda} \langle\nabla\triangle e_\lambda, N\rangle ds=\int_{\partial M}\sigma(\partial_\nu e_\lambda)\partial_\nu\triangle e_\lambda ds\\
\gtrsim&\lambda^2 \|\partial_\nu e_\lambda\|_{L^1(\partial M)}.
\end{split}
\end{equation}
\end{proof}

Now, we need to get an upper bound for $|\nabla\triangle e_\lambda|$. The approach is the same as that in Proposition 3.1 of \cite{SWZ}: Applying the gradient estimates of elliptic equation in the interior and near the boundary separately and combine the results.

\begin{prop}
If $e_\lambda$ satisfies \eqref{Xi} or \eqref{Pi}, $d=d(x)$ be the distance from $x\in M$ to $\partial M$, we have
\begin{equation} \label{upperbound}
\|(\lambda^{-1}+d)\nabla\triangle e_\lambda\|_{L^\infty(M)}\lesssim \lambda^\frac{n}{2}\|\partial_\nu e_\lambda\|_{L^1(\partial M)}.
\end{equation}
\end{prop}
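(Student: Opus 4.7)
The target is a pointwise bound for $\nabla\triangle e_\lambda$ with the Steklov-adapted weight $(\lambda^{-1}+d)$. The plan is to split $M$ into an interior region $\{d(x,\partial M)\ge\lambda^{-1}\}$ and a boundary strip $\{d(x,\partial M)<\lambda^{-1}\}$, and prove the pointwise bound in each region separately, then take the sup. The key structural fact is that $w:=\triangle e_\lambda$ is harmonic in $M$ (since $e_\lambda$ is biharmonic), so both interior and boundary gradient estimates for harmonic functions are available, and the boundary data of $w$ is controlled via the corollary $\|\triangle e_\lambda\|_{L^p(\partial M)}\approx\lambda\|\partial_\nu e_\lambda\|_{L^p(\partial M)}$ together with Theorem~\ref{lp estimate}.

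\textbf{Interior estimate.} For $x$ with $d(x)\ge\lambda^{-1}$, the ball $B_{d(x)/2}(x)$ sits inside $M$, and $w$ is harmonic on it. The standard $L^2$ gradient inequality for harmonic functions gives $|\nabla w(x)|\lesssim d(x)^{-(n+2)/2}\|w\|_{L^2(B_{d(x)/2}(x))}$, which I majorize by the global $\|w\|_{L^2(M)}$. The global $L^2$ norm is bounded through the biharmonic Green identity of Section~2: for \eqref{Xi}, $\int_M(\triangle e_\lambda)^2=\lambda\|\partial_\nu e_\lambda\|_{L^2(\partial M)}^2$, with analogous identities for \eqref{Theta} and \eqref{Pi}. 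Finally, convert the boundary $L^2$ norm into $L^1$ using the H\"older sandwich $\|\partial_\nu e_\lambda\|_{L^2}^2\le\|\partial_\nu e_\lambda\|_{L^\infty}\|\partial_\nu e_\lambda\|_{L^1}$ combined with the Sogge-type bound $\|\partial_\nu e_\lambda\|_{L^\infty}\lesssim\lambda^{(n-2)/2}\|\partial_\nu e_\lambda\|_{L^2}$, which yields $\|\partial_\nu e_\lambda\|_{L^2}\lesssim\lambda^{(n-2)/2}\|\partial_\nu e_\lambda\|_{L^1(\partial M)}$. Substituting and multiplying through by the weight $(\lambda^{-1}+d(x))\sim d(x)$ delivers the stated exponent.

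\textbf{Boundary-strip estimate.} For $x$ with $d(x)<\lambda^{-1}$, flatten $\partial M$ in a coordinate chart near the footpoint of $x$, restrict to a half-ball of radius $c\lambda^{-1}$, and rescale $y\mapsto x_0+\lambda^{-1}y$. The rescaled $\tilde w$ satisfies a second-order elliptic equation with uniformly bounded smooth coefficients on a fixed unit half-ball, with prescribed Dirichlet datum on the flat face. Standard boundary regularity (Schauder / Agmon--Douglis--Nirenberg) gives a uniform $L^\infty$ bound on $\nabla\tilde w$ in terms of $\|\tilde w\|_{L^2}$ plus the boundary data, both of which are controlled by the same boundary $L^1$ norm of $\partial_\nu e_\lambda$ via the $L^p$ machinery above. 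Unscaling brings in one factor of $\lambda$ on $|\nabla w(x)|$, which is exactly absorbed by the $\lambda^{-1}$ in the weight $(\lambda^{-1}+d)\sim\lambda^{-1}$.

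\textbf{Main obstacle.} The crux is hitting the exponent $\lambda^{n/2}$ cleanly. A na\"ive $L^\infty$ route (maximum principle plus the Sogge $L^\infty$ bound) wastes a factor of $\lambda^{(n-2)/2}$ and produces the weaker exponent $\lambda^{n-1}$; the sharp exponent requires running the harmonic-function bookkeeping in $L^2$ throughout and invoking the Sogge/H\"older sandwich only at the very last step that converts $L^2(\partial M)$ into $L^1(\partial M)$. A secondary technical point is the uniformity of the boundary-strip rescaling across points $x$, which relies on the smoothness of $\partial M$ so that the metric coefficients of the rescaled equation remain bounded as $\lambda\to\infty$; this is also what lets the two regional estimates be glued into a single $L^\infty(M)$ bound.
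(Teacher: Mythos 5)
Your decomposition into $\{d\ge\lambda^{-1}\}$ and a boundary strip, the use of harmonicity of $w=\triangle e_\lambda$, and the rescaled Schauder argument near $\partial M$ all match the paper's scheme, but the interior estimate has a genuine quantitative gap: it does not produce the exponent $\lambda^{n/2}$. Following your chain, $d(x)|\nabla w(x)|\lesssim d(x)^{-n/2}\|w\|_{L^2(M)}\le\lambda^{n/2}\|w\|_{L^2(M)}$, and the Green identity gives $\|w\|_{L^2(M)}=\lambda^{1/2}\|\partial_\nu e_\lambda\|_{L^2(\partial M)}$; converting $L^2(\partial M)$ into $L^1(\partial M)$ costs at least $\lambda^{(n-2)/4}$ (the sharp conversion used in the paper; your H\"older sandwich costs $\lambda^{(n-2)/2}$). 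The total is $\lambda^{3n/4}$ at best, respectively $\lambda^{n-1/2}$, both strictly worse than $\lambda^{n/2}$ for every $n\ge 2$. The loss comes precisely from majorizing $\|w\|_{L^2(B_{d(x)/2}(x))}$ by the global $\|w\|_{L^2(M)}$: the small ball carries only a $d(x)^{n/2}$-proportion of the mass of a spread-out function, and you have discarded that gain.

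The missing ingredient is exactly the one you dismiss in your ``main obstacle'' paragraph. The route that gives $\lambda^{n/2}$ is the $L^\infty$ route, but with the \emph{direct} kernel bound of Sogge--Zelditch and Sogge--Wang--Zhu: for an eigenfunction of the elliptic first-order operator $\Xi$ (or $\Pi$) on the $(n-1)$-dimensional closed manifold $\partial M$ one has $\|\partial_\nu e_\lambda\|_{L^\infty(\partial M)}\lesssim\lambda^{(n-2)/2}\|\partial_\nu e_\lambda\|_{L^1(\partial M)}$, and more generally $\|(D^T)^k\partial_\nu e_\lambda\|_{L^\infty(\partial M)}\lesssim\lambda^{k+(n-2)/2}\|\partial_\nu e_\lambda\|_{L^1(\partial M)}$, coming from the pointwise bound $\lambda^{(n-2)/2}$ on the reproducing kernel of $\chi(\lambda-\Xi)$ --- not from composing $\|f\|_{L^\infty}\lesssim\lambda^{(n-2)/2}\|f\|_{L^2}$ with an $L^2$-to-$L^1$ step, which is what produces your claimed lossy $\lambda^{n-1}$. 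Since $\triangle e_\lambda=\lambda\partial_\nu e_\lambda$ on $\partial M$ (plus $H\partial_\nu e_\lambda$ for \eqref{Pi}), this yields $\|\triangle e_\lambda\|_{L^\infty(\partial M)}\lesssim\lambda^{n/2}\|\partial_\nu e_\lambda\|_{L^1(\partial M)}$; the maximum principle extends this bound to all of $M$, the interior gradient estimate for harmonic functions then gives $d\,|\nabla\triangle e_\lambda|\lesssim\|\triangle e_\lambda\|_{L^\infty(M)}$ on $\{d\ge\delta\lambda^{-1}\}$, and the same $L^\infty$ control of the rescaled Dirichlet data (with its tangential derivatives) is what feeds the Schauder estimate in the boundary strip. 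Without this sup-norm input your boundary-strip step is likewise incomplete, since the $C^{k}$ norms of the rescaled boundary datum must be bounded by $\|\partial_\nu e_\lambda\|_{L^1(\partial M)}$ and your ``$L^p$ machinery above'' does not supply that.
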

\begin{proof}
On the boundary, $\triangle e_\lambda=\lambda\partial_\nu e_\lambda$ for problem \eqref{Xi} and $\triangle e_\lambda=\lambda\partial_\nu e_\lambda+H\partial_\nu e_\lambda$ for problem \eqref{Pi}. We can argue as in \cite{SWZ}, see also \cite{SZ} that
\begin{equation}
\lambda^{-k}\|(D^T)^k\triangle e_\lambda\|_{L^\infty(\partial M)}\lesssim \lambda^\frac{n}{2}\|\partial_\nu e_\lambda\|_{L^1(\partial M)},
\end{equation}
where $(D^T)^k$ denotes $k$ boundary derivatives.

For the interior estimate, start with
\begin{equation}
\|\triangle e_\lambda\|_{L^\infty(\partial M)}\lesssim \lambda^\frac{n}{2}\|\partial_\nu e_\lambda\|_{L^1(\partial M)},
\end{equation}
since $\triangle e_\lambda$ is harmonic, from the gradient estimate, see corollary 6.3 of \cite{GT}, for a fixed $\delta>0$,
\begin{equation}
\|d\nabla\triangle e_\lambda\|_{L^\infty(\{d\geq \delta\lambda^{-1}\})}\leq C_\delta \lambda^\frac{n}{2}\|\partial_\nu e_\lambda\|_{L^1(\partial M)}.
\end{equation}
The constant $C_\delta$ depends on $\delta$ and $M$, but not on $\lambda$.

Now, for the boundary estimate for any $x_0\in\partial M$, use a local coordinate in a neighborhood of $x_0$ which map $x_0$ to 0, $\partial M$ to $\{x_n=0\}$, and the neighborhood of $x_0$ into the upper half space. For simplicity, we also us $e_\lambda$ to denote the function induced on the coordinate. Consider the ball of radius $2\delta\lambda^{-1}$ around $0$ and define 
\begin{equation}
u_\lambda(x)=\lambda^{-\frac{n}{2}}\triangle_M e_\lambda(x\lambda^{-1}),
\end{equation}
which is defined in the upper half of the ball of radius $2\delta$, $B^+_{2\delta}(0)$. We have the estimate
\begin{equation}
\|(D^T)^k u_\lambda\|_{L^\infty(\mathbb{R}^{n-1}\cap B^+_{2\delta}(0))}\leq C_k \|\partial_\nu e_\lambda\|_{L^1(\partial M)}.
\end{equation}
The partial differential equation satisfied by $u$ has uniformly bounded coefficients. We can also find $\phi_\lambda$ in this coordinate which agree with $u_\lambda$ on the boundary and is bounded in $C^{2,\alpha}({B^+_{2\delta}(0)})$ by some constant times $\|\partial_\nu e_\lambda\|_{L^1(\partial M)}$. Use Corollary 8.36 in \cite{GT}, the $C^{1,\alpha}({B^+_{\delta}(0)})$ norm of $u_\lambda$ is bounded by $C_\alpha \|\partial_\nu e_\lambda\|_{L^1(\partial M)}$, with $C_\alpha$ independent of $\lambda$. Thus, we have
\begin{equation}
\|Du_\lambda\|_{L^\infty(B^+_{\delta}(0))}\leq C_\alpha \|\partial_\nu e_\lambda\|_{L^1(\partial M)},
\end{equation}
which is the desired result.
\end{proof}

\begin{prop}
If $e_\lambda$ satisfies \eqref{Theta}, we have
\begin{equation}
\|(\lambda^{-1}+d)\nabla\triangle e_\lambda\|_{L^\infty(M)}\lesssim \lambda^\frac{n+4}{2}\|e_\lambda\|_{L^1(\partial M)}.
\end{equation}
\end{prop}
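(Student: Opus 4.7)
The plan is to mirror the preceding proposition's proof, adjusting the boundary identities and $\lambda$-exponents for problem \eqref{Theta}. The key boundary identities are $\triangle e_\lambda|_{\partial M}=\theta e_\lambda$, with $\sqrt[3]{2\Theta^2}+\theta\in OPS^1(\partial M)$, and $\partial_\nu\triangle e_\lambda=-\lambda^3 e_\lambda$. First, I would upgrade these to the tangential $L^\infty$ bounds
\[
\lambda^{-k}\|(D^T)^k\triangle e_\lambda\|_{L^\infty(\partial M)}\lesssim \lambda^{(n+4)/2}\|e_\lambda\|_{L^1(\partial M)},\qquad k=0,1,2,\dots.
\]
The factor $\lambda^2$ (relative to the preceding proposition) is contributed by the corollary $\|\triangle e_\lambda\|_{L^p(\partial M)}\approx \lambda^2\|e_\lambda\|_{L^p(\partial M)}$; the remaining $\lambda^{n/2}$ comes from the Sogge--Wang--Zhu / Sogge--Zelditch argument applied to $e_\lambda$ as an eigenfunction of $\sqrt[3]{\Theta}$, a first-order elliptic classical pseudo-differential operator whose principal symbol is a constant multiple of $|\xi'|$, just as in the preceding proposition for $\partial_\nu e_\lambda$ as an eigenfunction of $\Xi$ or $\Pi$.

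Since $e_\lambda$ is biharmonic, $u:=\triangle e_\lambda$ is harmonic on $M$, so the maximum principle gives $\|u\|_{L^\infty(M)}\le \|u\|_{L^\infty(\partial M)}$. The interior gradient estimate for harmonic functions (Corollary 6.3 of \cite{GT}), applied on balls of radius $\sim d$ centered at points with $d\geq\delta\lambda^{-1}$, then yields
\[
\|d\,\nabla\triangle e_\lambda\|_{L^\infty(\{d\geq \delta\lambda^{-1}\})}\lesssim \lambda^{(n+4)/2}\|e_\lambda\|_{L^1(\partial M)}.
\]
For points with $d<\delta\lambda^{-1}$ I would copy the rescaling argument verbatim: straighten $\partial M$ locally at a boundary point $x_0$, set $u_\lambda(x)=\lambda^{-(n+4)/2}\triangle_M e_\lambda(x\lambda^{-1})$ on the rescaled upper half-ball $B^+_{2\delta}(0)$, compare to the harmonic extension $\phi_\lambda$ of $u_\lambda|_{\{x_n=0\}}$ using the tangential bounds of the previous step to control its $C^{2,\alpha}$ norm, and apply Corollary 8.36 of \cite{GT} to obtain $\|\nabla u_\lambda\|_{L^\infty(B^+_\delta(0))}\lesssim \|e_\lambda\|_{L^1(\partial M)}$. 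Scaling back produces the bound with $\lambda^{-1}$ in place of $d$, and combining the two regimes yields the proposition.

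The main obstacle is the first step: importing the boundary $L^1$-to-$L^\infty$ bound $\|e_\lambda\|_{L^\infty(\partial M)}\lesssim \lambda^{n/2}\|e_\lambda\|_{L^1(\partial M)}$ and its tangential-derivative variants. Naive interpolation between Theorem~\ref{lp estimate} at $p=\infty$ and H\"older's inequality only delivers $\lambda^{n-2}$, which is weaker than $\lambda^{n/2}$ for $n\geq 5$; the sharper exponent $\lambda^{n/2}$ requires the finer Sogge--Zelditch / Sogge--Wang--Zhu pointwise analysis exploiting the first-order elliptic structure of $\sqrt[3]\Theta$. Modulo this import (on which the preceding proposition equally rests for $\partial_\nu e_\lambda$), the remainder of the argument is routine Gilbarg--Trudinger elliptic regularity combined with the pseudo-differential calculus of Section~3.
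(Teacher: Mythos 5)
Your overall scaffolding (interior gradient estimate for the harmonic function $\triangle e_\lambda$ on $\{d\geq\delta\lambda^{-1}\}$, plus a rescaled Schauder estimate on half-balls of radius $\sim\lambda^{-1}$ near the boundary) matches the paper, but your first step diverges at exactly the point where the \eqref{Theta} case is genuinely harder, and as written it has a gap. You propose to control $\|(D^T)^k\triangle e_\lambda\|_{L^\infty(\partial M)}$ through the Dirichlet data $\triangle e_\lambda|_{\partial M}=\theta e_\lambda$ together with the corollary $\|\triangle e_\lambda\|_{L^p(\partial M)}\approx\lambda^2\|e_\lambda\|_{L^p(\partial M)}$. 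That corollary is proved only for $p\in[1,\infty)$: it rests on the lemma $\|Pe_\lambda\|_{L^p}\lesssim\lambda^k\|e_\lambda\|_{L^p}$ for $P\in OPS^k(\partial M)$, which uses $L^p$-boundedness of zeroth-order pseudo-differential operators and is not available at $p=\infty$. More to the point, unlike the \eqref{Xi}/\eqref{Pi} cases --- where $\triangle e_\lambda|_{\partial M}$ is literally $\lambda\partial_\nu e_\lambda$ (plus $H\partial_\nu e_\lambda$), a pointwise multiple of the eigenfunction, so the Sogge--Wang--Zhu sup-norm argument transfers verbatim --- here $\theta$ is a genuine second-order pseudo-differential operator, and a bound $\|\theta e_\lambda\|_{L^\infty}\lesssim\lambda^2\|e_\lambda\|_{L^\infty}$ is not supplied by anything established in the paper. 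The remark immediately following this proposition says exactly this: for problem \eqref{Theta} one cannot get the $L^\infty$ bound of $\triangle e_\lambda$ on the boundary directly.

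The paper's actual route is different: it works with the Neumann data $\partial_\nu\triangle e_\lambda=-\lambda^3 e_\lambda$, which \emph{is} a scalar multiple of the eigenfunction, so the tangential $L^\infty$ bounds follow directly with the exponent $\lambda^{(n+4)/2}=\lambda^3\cdot\lambda^{(n-2)/2}$. It then recovers $\|\triangle e_\lambda\|_{L^\infty(\partial M)}\lesssim\|\partial_\nu\triangle e_\lambda\|_{L^\infty(\partial M)}$ from the $L^\infty$ estimate for the Neumann problem in \cite{YZ} (this feeds the interior gradient estimate), and in the rescaled half-ball it replaces your Corollary 8.36 (Dirichlet Schauder) by Lemma 6.29 of \cite{GT}, $\|u_\lambda\|_{C^{2,\alpha}}\leq C(\|u_\lambda\|_{L^\infty}+\|\partial_\nu u_\lambda\|_{C^{1,\alpha}})$, i.e.\ the Schauder estimate with Neumann boundary data. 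This detour through the normal derivative is precisely the source of the exponent $\frac{n+4}{2}$, one power of $\lambda$ worse than the $\frac{n+2}{2}$ your scheme would yield if it worked; your bookkeeping lands on $\frac{n+4}{2}$ only by pairing the $\lambda^2$ conversion with an $L^1\to L^\infty$ exponent $\lambda^{n/2}$ for $e_\lambda$ itself, which is inconsistent with the preceding proposition (whose bound amounts to $\|\partial_\nu e_\lambda\|_{L^\infty}\lesssim\lambda^{(n-2)/2}\|\partial_\nu e_\lambda\|_{L^1}$). To repair your argument you must either justify a sup-norm estimate for $\theta e_\lambda$ --- which the paper deliberately avoids --- or switch to the Neumann data as the paper does.
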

\begin{proof}
On the boundary, $\partial_\nu\triangle e_\lambda=-\lambda^3 e_\lambda$ for problem \eqref{Theta}, we have that
\begin{equation}
\lambda^{-k}\|(D^T)^k\partial_\nu\triangle e_\lambda\|_{L^\infty(\partial M)}\lesssim \lambda^\frac{n+4}{2}\|\partial_\nu e_\lambda\|_{L^1(\partial M)},
\end{equation}
where $(D^T)^\alpha$ denotes $\alpha$ boundary derivatives.

For the interior estimate, us the result in \cite{YZ}, we have
\begin{equation}
\|\triangle e_\lambda\|_{L^\infty(\partial M)}\leq C\|\partial_\nu\triangle e_\lambda\|_{L^\infty(\partial M)}\lesssim \lambda^\frac{n+4}{2}\|e_\lambda\|_{L^1(\partial M)}
\end{equation}
and therefore for any given $\delta>0$,
\begin{equation}
\|d\nabla\triangle e_\lambda\|_{L^\infty(\{d\geq \delta\lambda^{-1}\})}\leq C_\delta \lambda^\frac{n+4}{2}\|e_\lambda\|_{L^1(\partial M)}.
\end{equation}

Now, for the boundary estimate, for any $x_0\in\partial M$, use the same approach as above, define 
\begin{equation}
u_\lambda(x)=\lambda^{-\frac{n+4}{2}}\triangle e_\lambda(x\lambda^{-1}),
\end{equation}
which is defined in $B^+_{2\delta}(0)$. We have the estimate
\begin{equation}
\|(D^T)^k \partial_\nu u_\lambda\|_{L^\infty(\partial M)}\leq C_k \|e_\lambda\|_{L^1(\partial M)}.
\end{equation}
From lemma 6.29 in \cite{GT}, we have the following bound:
\begin{equation}
\|u_\lambda\|_{C^{2,\alpha}}\leq C(\|u_\lambda\|_{L^\infty}+\|\partial_\nu u_\lambda\|_{C^{1,\alpha}}).
\end{equation}
Thus, we have
\begin{equation}
\|Du_\lambda\|_{L^\infty(B^+_{\delta}(0))}\leq C_\alpha \|e_\lambda\|_{L^1(\partial M)},
\end{equation}
and therefore
\begin{equation}
\|\lambda^{-1}\nabla\triangle e_\lambda\|_{L^\infty(B^+_{\delta}(0))}\leq C \lambda^\frac{n+4}{2} \|e_\lambda\|_{L^1(\partial M)}.
\end{equation}
\end{proof}

Now, we can estimate the interior set in each case.

\begin{proof}[Proof of Theorem \ref{realinnerset}]
For problem \eqref{Xi}, \eqref{Pi}, we have \eqref{lowerbound} and \eqref{upperbound}. Therefore
\begin{equation}
\begin{split}
\lambda^2\|\partial_\nu e_\lambda\|_{L^1(\partial M)}&\lesssim\int_{Z_\lambda} |\langle\nabla\triangle e_\lambda,N\rangle|ds\leq\|\nabla\triangle e_\lambda\|_{L^\infty(M)}|Z_\lambda|\\
&\lesssim\lambda^{\frac{n+2}{2}}\|\partial_\nu e_\lambda\|_{L^1(\partial M)}|Z_\lambda|.
\end{split}
\end{equation}
Cancel $\|\partial_\nu e_\lambda\|_{L^1(\partial M)}$ from the both side yields the desired result.

For problem \eqref{Theta}, we can use a similar argument.
\end{proof}
\begin{rmk}
For problem \eqref{Theta}, we can not get the $L^\infty$ bound of $\triangle e_\lambda$ on the boundary. We use the $L^\infty$ bound of $\partial_\nu\triangle e_\lambda$ instead, thus losing a factor of $\lambda$.
\end{rmk}
\section{Lower bound for the boundary nodal set }
Let us turn our attention to the boundary $\partial M$ and get the estimations of the nodal sets for the operators $\Theta$, $\Xi$ and $\Pi$. Since all we need is the property of the operator on $\partial M$, we can argue in an abstract way. Let $\Psi\in OPS^1(\partial M)$ be classical and with the principle symbol equal to some nonzero constant times the principle symbol of $\sqrt{-\triangle^T}$. Let $\phi_\lambda$ be an eigenfunction of $\Psi$ corresponds to $\lambda$. Note that the case we want is given by $\Psi=\sqrt[3]{\Theta}$, $\Xi$, $\Pi$ and $\phi_\lambda=e_\lambda|_{\partial M}$, $\partial_\nu e_\lambda$, $\partial_\nu e_\lambda$ respectively.

The proof is given in \cite{WZ} to establish the lower bound of boundary nodal sets of harmonic Steklov eigenfunctions. From now on, all the argument are on $\partial M$ and all the $L^p$ norm are $L^p(\partial M)$. Let $\tilde{Z}_\lambda^\alpha=\{x\in\partial M|\phi_\lambda=\alpha\}$ be the $\alpha$-level set of 
$\phi_\lambda$. We have the following equation.
\begin{thm}
For any $f\in C^\infty(\bar{M})$, any regular value $\alpha$ of $\phi_\lambda$, we have
\begin{equation}
-\int_{\partial M}\sigma_\alpha(\phi_\lambda)\Big[\langle\nabla^Tf,\nabla^T\phi_\lambda\rangle+f\triangle^T\phi_\lambda
\Big]dV=2\int_{\tilde{Z}_\lambda^\alpha} f|\nabla^T\phi_\lambda|ds.
\end{equation}
\end{thm}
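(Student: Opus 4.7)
The plan is to mimic the proof of Theorem \ref{integratebyparts}, but now working intrinsically on the closed manifold $\partial M$, which itself has no boundary. Because $\partial M$ has empty boundary, no analogue of the $\int_{\partial M}\,f\,\sigma_\alpha\,\partial_\nu(\cdot)$ term appears, so the resulting identity has no boundary contribution.

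First, since $\alpha$ is a regular value of $\phi_\lambda$, the set $\tilde Z_\lambda^\alpha$ is a smooth codimension-$1$ submanifold of $\partial M$, and the open set $\{\phi_\lambda>\alpha\}\subset\partial M$ decomposes into countably many connected components $\{\tilde D_k^{+,\alpha}\}_k$ whose topological boundaries lie in $\tilde Z_\lambda^\alpha$. On each $\tilde D_k^{+,\alpha}$, apply the divergence theorem on $\partial M$ to the vector field $f\nabla^T\phi_\lambda$, using the identity $\nabla^T\!\cdot(f\nabla^T\phi_\lambda)=\langle\nabla^Tf,\nabla^T\phi_\lambda\rangle+f\triangle^T\phi_\lambda$:
\begin{equation*}
\int_{\tilde D_k^{+,\alpha}}\langle\nabla^Tf,\nabla^T\phi_\lambda\rangle+f\triangle^T\phi_\lambda\,dV
=\int_{\partial\tilde D_k^{+,\alpha}}f\,\langle\nabla^T\phi_\lambda,\nu_k\rangle\,ds,
\end{equation*}
where $\nu_k$ is the outward unit conormal in $\partial M$. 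On $\partial\tilde D_k^{+,\alpha}\subset\tilde Z_\lambda^\alpha$, $\phi_\lambda$ decreases as one exits, so $\nu_k=-\nabla^T\phi_\lambda/|\nabla^T\phi_\lambda|$ and $\langle\nabla^T\phi_\lambda,\nu_k\rangle=-|\nabla^T\phi_\lambda|$. The analogous argument on the components $\tilde D_k^{-,\alpha}$ of $\{\phi_\lambda<\alpha\}$ gives a plus sign, since the outward conormal points along $\nabla^T\phi_\lambda$.

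Next, multiply each $+$ identity by $+1$ and each $-$ identity by $-1$ and sum. On the left one recognizes $\sigma_\alpha(\phi_\lambda)$ (which is $\pm 1$ on $\tilde D_k^{\pm,\alpha}$ and $0$ on the level set, a set of measure zero), yielding
\begin{equation*}
\int_{\partial M}\sigma_\alpha(\phi_\lambda)\Big[\langle\nabla^Tf,\nabla^T\phi_\lambda\rangle+f\triangle^T\phi_\lambda\Big]dV.
\end{equation*}
On the right, by regularity of $\alpha$, almost every point of $\tilde Z_\lambda^\alpha$ lies on the boundary of exactly one $+$ component and exactly one $-$ component, so each such point contributes $-f|\nabla^T\phi_\lambda|$ twice; the total is $-2\int_{\tilde Z_\lambda^\alpha}f|\nabla^T\phi_\lambda|\,ds$. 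Rearranging yields the claimed identity.

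The only subtlety worth noting is a justification of the divergence theorem on the open sets $\tilde D_k^{\pm,\alpha}$, which may be numerous, possibly with irregular closure far from the level set. Since $\alpha$ is a regular value, $\tilde Z_\lambda^\alpha$ is smooth, and one may either invoke a standard co-area/Sard argument or approximate $\sigma_\alpha$ by smooth sign functions $\sigma_\alpha^\varepsilon$ and pass to the limit using dominated convergence on both sides; the summability follows from the smoothness of $f$, $\phi_\lambda$ on the compact manifold $\partial M$. This is the only genuinely technical step, and it is entirely parallel to what was done in Theorem \ref{integratebyparts}.
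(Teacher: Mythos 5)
Your proof is correct and follows essentially the same route as the paper: decompose $\{\phi_\lambda>\alpha\}$ and $\{\phi_\lambda<\alpha\}$ into connected components, apply the divergence theorem to $f\nabla^T\phi_\lambda$ on each (with the sign of the outward conormal determined by whether $\phi_\lambda$ decreases or increases outward), and sum, using that a.e.\ point of $\tilde Z_\lambda^\alpha$ bounds exactly one $+$ and one $-$ component. Your extra remark on justifying the divergence theorem on the components is a reasonable addition (and in fact, since $\alpha$ is a regular value on the compact manifold $\partial M$, there are only finitely many components, each with smooth boundary).
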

\begin{proof}
Let $\{\tilde{D}_k^{+,\alpha}\}_k$ be the collection of  connected components of the set $\{\phi_\lambda>\alpha\}$, $\tilde{Z}_k^{+,\alpha}=\partial \tilde{D}_k^{+,\alpha}$.  we have
\begin{equation}
-\int_{\tilde{D}_k^{+,\alpha}}\langle\nabla^T f,\nabla^T\phi_\lambda\rangle+f\triangle^T\phi_\lambda dV=\int_{\tilde{Z}_k^{+,\alpha}} f|\nabla^T\phi_\lambda|ds.
\end{equation}
Similarly, from the set $\{\phi_\lambda<\alpha\}$, we can define $\tilde{D}_k^{-,\alpha}$, $\tilde{Z}_k^{-,\alpha}$ together with a similar equation:
\begin{equation}
\int_{\tilde{D}_k^{-,\alpha}}\langle\nabla^T f,\nabla^T\phi_\lambda\rangle+f\triangle^T\phi_\lambda dV=\int_{\tilde{Z}_k^{-,\alpha}} f|\nabla^T\phi_\lambda|ds.
\end{equation}
Summing over all the equations and we can get the desired equation.
\end{proof}

Choosing $f=1$ gives the following:
\begin{cor}
There exists a constant $c$ such that for any regular value $\alpha$ of $\phi_\lambda$ satisfying $|\alpha|<c\lambda^\frac{2-n}{4}\|\phi_\lambda\|_{L^2}$, we have
\begin{equation}
\int_{\tilde{Z}_\lambda^\alpha} |\nabla^T\phi_\lambda|ds\gtrsim\lambda^2\|\phi_\lambda\|_{L^1}.
\end{equation}
\end{cor}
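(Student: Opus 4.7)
The plan is to substitute $f\equiv 1$ into the preceding theorem and exploit that $\Psi^{2}$ and $-\triangle^{T}$ share the same principal symbol up to a positive constant. Once $\triangle^{T}\phi_{\lambda}$ is rewritten in terms of $\lambda^{2}\phi_{\lambda}$ modulo a genuinely lower-order pseudodifferential operator, the $L^{1}$ estimates for pseudodifferential operators acting on $\phi_{\lambda}$ established in Section~3 will close the argument, paralleling exactly the analogous corollary that followed Theorem~\ref{integratebyparts}.

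Concretely, with $f\equiv 1$ the identity becomes $-\int_{\partial M}\sigma_{\alpha}(\phi_{\lambda})\triangle^{T}\phi_{\lambda}\,ds=2\int_{\tilde Z_{\lambda}^{\alpha}}|\nabla^{T}\phi_{\lambda}|\,ds$. Because $\Psi$ is classical of order one with principal symbol $c_{\Psi}|\xi'|$ for a nonzero constant $c_{\Psi}$, the operator $R:=c_{\Psi}^{-2}\Psi^{2}+\triangle^{T}$ has vanishing principal symbol and therefore lies in $OPS^{1}(\partial M)$. Using $\Psi\phi_{\lambda}=\lambda\phi_{\lambda}$ I would substitute $\triangle^{T}\phi_{\lambda}=-c_{\Psi}^{-2}\lambda^{2}\phi_{\lambda}+R\phi_{\lambda}$ to obtain
\begin{equation*}
2\int_{\tilde Z_{\lambda}^{\alpha}}|\nabla^{T}\phi_{\lambda}|\,ds = c_{\Psi}^{-2}\lambda^{2}\int_{\partial M}\sigma_{\alpha}(\phi_{\lambda})\phi_{\lambda}\,ds - \int_{\partial M}\sigma_{\alpha}(\phi_{\lambda})R\phi_{\lambda}\,ds.
\end{equation*}
The pointwise identity $\sigma_{\alpha}(\phi_{\lambda})(\phi_{\lambda}-\alpha)=|\phi_{\lambda}-\alpha|$ gives $\sigma_{\alpha}(\phi_{\lambda})\phi_{\lambda}\ge|\phi_{\lambda}|-2|\alpha|$, so the first term is at least $c_{\Psi}^{-2}\lambda^{2}\bigl(\|\phi_{\lambda}\|_{L^{1}}-2|\alpha||\partial M|\bigr)$; the second term is bounded in absolute value by $\|R\phi_{\lambda}\|_{L^{1}}\lesssim\lambda^{1+\epsilon}\|\phi_{\lambda}\|_{L^{1}}$ via the $L^{1}$-pseudodifferential lemma of Section~3, which applies because in each case $\phi_{\lambda}$ is either $e_{\lambda}|_{\partial M}$ or $\partial_{\nu}e_{\lambda}$.

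To finish, I would invoke the $L^{p}$ estimate $\|\phi_{\lambda}\|_{L^{1}}\gtrsim\lambda^{(2-n)/4}\|\phi_{\lambda}\|_{L^{2}}$ and choose $c$ small enough that the hypothesis $|\alpha|<c\lambda^{(2-n)/4}\|\phi_{\lambda}\|_{L^{2}}$ forces $2|\alpha||\partial M|\le\tfrac{1}{2}\|\phi_{\lambda}\|_{L^{1}}$. The remaining $\lambda^{1+\epsilon}\|\phi_{\lambda}\|_{L^{1}}$ error is then dominated by $\lambda^{2}\|\phi_{\lambda}\|_{L^{1}}$ for $\lambda$ large once any $\epsilon\in(0,1)$ is fixed, yielding the claimed bound. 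No real obstacle arises beyond careful bookkeeping; the main conceptual point, already exploited twice earlier in the paper, is that an elliptic first-order operator with correct principal symbol allows one to replace a second-order tangential derivative by $\lambda^{2}$ up to a controllable $OPS^{1}$ remainder.
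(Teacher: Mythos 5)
Your proposal is correct and follows essentially the same route as the paper: substitute $f\equiv 1$, replace $\triangle^{T}\phi_{\lambda}$ by $-(\text{const})\lambda^{2}\phi_{\lambda}$ plus an $OPS^{1}$ remainder controlled by the $L^{1}$ lemma, and absorb the $\alpha$-term via the $L^{1}$--$L^{2}$ comparison. The only cosmetic difference is that the paper obtains the remainder by writing $\sqrt{-\triangle^{T}}=a\Psi+P_{0}$ and squaring, whereas you note directly that $c_{\Psi}^{-2}\Psi^{2}+\triangle^{T}$ has vanishing principal symbol; these are equivalent.
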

\begin{proof} Put $f=1$ yields
\begin{equation}
2\int_{\tilde{Z}_\lambda^\alpha}|\nabla^T\phi_\lambda|ds=-\int_{\partial M}\sigma_\alpha(\phi_\lambda)\triangle^T\phi_\lambda
dV.
\end{equation}
Since $\sqrt{-\triangle^T}=a\Psi+P_0$, for some $a\neq0$, $P_0\in OPS^0(\partial M)$, 
\begin{equation}
\triangle^T\phi_\lambda=-a^2\Psi^2\phi_\lambda-(a\Psi P_0+aP_0 \Psi+P_0^2)\phi\lambda=-a^2\lambda^2\phi_\lambda-(a\Psi P_0+aP_0 \Psi+P_0^2)\phi_\lambda.
\end{equation}
Using $a\Psi P_0+aP_0 \Psi+P_0^2\in OPS^1(\partial M)$, we can bound the second term by
\begin{equation}
\|(a\Psi P_0+aP_0 \Psi+P_0^2)\phi_\lambda\|_{L^1}\lesssim\lambda^{1+\epsilon}\|\phi_\lambda\|_{L^1}.
\end{equation}
Proceed as in \eqref{Lowerbound}, and choose the constant $c$ as before, we can get the desired result.
\end{proof}

Next, choosing $f=\sqrt{1+|\nabla^T \phi_\lambda|^2}$ gives the following proposition.
\begin{prop}
We have the following estimation when $\lambda$ large enough:
\begin{equation}
\int_{\tilde{Z}_\lambda^\alpha}|\nabla^T\phi_\lambda|^2 ds\lesssim \lambda^3\|\phi_\lambda\|_{L^2}^2.
\end{equation}
\end{prop}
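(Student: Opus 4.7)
The plan is to apply the identity of the previous theorem with the specific choice $f=\sqrt{1+|\nabla^T\phi_\lambda|^2}$, then bound the two resulting right-hand-side integrals by Cauchy--Schwarz, and finally invoke elliptic pseudo-differential bounds on $\partial M$ (which is closed) to convert each Sobolev-type norm into $\lambda^k\|\phi_\lambda\|_{L^2}$.

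First, since $|\nabla^T\phi_\lambda|^2\le|\nabla^T\phi_\lambda|\sqrt{1+|\nabla^T\phi_\lambda|^2}$, choosing $f=\sqrt{1+|\nabla^T\phi_\lambda|^2}$ in the previous theorem gives
\begin{equation*}
2\int_{\tilde{Z}_\lambda^\alpha}|\nabla^T\phi_\lambda|^2\,ds\;\le\;\int_{\partial M}\bigl|\langle\nabla^T f,\nabla^T\phi_\lambda\rangle\bigr|\,dV+\int_{\partial M} f\,|\triangle^T\phi_\lambda|\,dV.
\end{equation*}
For the first term, $\nabla^T f=(\nabla^{T,2}\phi_\lambda)(\nabla^T\phi_\lambda,\cdot)/f$, so the integrand is dominated by $|\nabla^{T,2}\phi_\lambda|\,|\nabla^T\phi_\lambda|$, and Cauchy--Schwarz yields the bound $\|\nabla^{T,2}\phi_\lambda\|_{L^2}\|\nabla^T\phi_\lambda\|_{L^2}$. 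For the second term, Cauchy--Schwarz gives $\|f\|_{L^2}\|\triangle^T\phi_\lambda\|_{L^2}\lesssim(1+\|\nabla^T\phi_\lambda\|_{L^2})\|\triangle^T\phi_\lambda\|_{L^2}$.

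The remaining task is to estimate three tangential norms in terms of $\lambda\|\phi_\lambda\|_{L^2}$. Since $\sqrt{-\triangle^T}=a\Psi+P_0$ with $P_0\in OPS^0(\partial M)$, we have $-\triangle^T=a^2\Psi^2+R_1$ with $R_1\in OPS^1(\partial M)$; applying the lemma on $L^p$-boundedness of pseudo-differential operators acting on $\phi_\lambda$ (with $p=2$) gives $\|\triangle^T\phi_\lambda\|_{L^2}\lesssim\lambda^2\|\phi_\lambda\|_{L^2}$. Integration by parts on the closed manifold $\partial M$ then yields
\begin{equation*}
\|\nabla^T\phi_\lambda\|_{L^2}^2=-\int_{\partial M}\phi_\lambda\triangle^T\phi_\lambda\,dV\le\|\phi_\lambda\|_{L^2}\|\triangle^T\phi_\lambda\|_{L^2}\lesssim\lambda^2\|\phi_\lambda\|_{L^2}^2,
\end{equation*}
so $\|\nabla^T\phi_\lambda\|_{L^2}\lesssim\lambda\|\phi_\lambda\|_{L^2}$. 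Finally, the Bochner identity on the closed manifold $\partial M$,
\begin{equation*}
\int_{\partial M}|\nabla^{T,2}\phi_\lambda|^2\,dV=\int_{\partial M}(\triangle^T\phi_\lambda)^2\,dV-\int_{\partial M}\mathrm{Ric}^T(\nabla^T\phi_\lambda,\nabla^T\phi_\lambda)\,dV,
\end{equation*}
combined with the previous two bounds gives $\|\nabla^{T,2}\phi_\lambda\|_{L^2}\lesssim\lambda^2\|\phi_\lambda\|_{L^2}$.

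Plugging these three estimates back in, the first integral is controlled by $\lambda^2\|\phi_\lambda\|_{L^2}\cdot\lambda\|\phi_\lambda\|_{L^2}=\lambda^3\|\phi_\lambda\|_{L^2}^2$, and the second by $(1+\lambda\|\phi_\lambda\|_{L^2})\cdot\lambda^2\|\phi_\lambda\|_{L^2}$, which is also of order $\lambda^3\|\phi_\lambda\|_{L^2}^2$ once $\lambda$ is large (the stray $\lambda^2\|\phi_\lambda\|_{L^2}$ is absorbed into the leading term, noting that normalization $\|\phi_\lambda\|_{L^2}=1$ or the Weyl-type lower bound makes this a strictly lower-order contribution). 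This gives the claimed bound. The only step requiring genuine care is the $L^2$ Hessian estimate via Bochner, which is standard but is the one place where the geometry of $\partial M$ (its Ricci curvature) enters; every other step is either the identity of the previous theorem, Cauchy--Schwarz, or the pseudo-differential $L^p$ lemma already proved.
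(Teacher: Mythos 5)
Your proposal is correct and follows essentially the same route as the paper: the same test function $f=\sqrt{1+|\nabla^T\phi_\lambda|^2}$, the same Cauchy--Schwarz splitting, the pseudo-differential bound $\|\triangle^T\phi_\lambda\|_{L^2}\lesssim\lambda^2\|\phi_\lambda\|_{L^2}$, integration by parts for $\|\nabla^T\phi_\lambda\|_{L^2}$, and the Bochner identity for the Hessian. Your explicit remark on absorbing the constant term coming from the $1$ under the square root (via homogeneity/normalization) is a small point the paper glosses over, but it does not change the argument.
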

\begin{proof}
Plug in $f=\sqrt{1+|\nabla^T\phi_\lambda|^2}$,
\begin{equation}
\begin{split}
2\int_{\tilde{Z}_\lambda^\alpha}|\nabla^T\triangle e_\lambda|^2 ds&\leq 2\int_{\tilde{Z}_\lambda^\alpha}|\nabla^T\phi_\lambda|\sqrt{1+|\nabla^T\phi_\lambda|^2} ds\\
&\leq\int_{\partial M}\sqrt{1+|\nabla^T\phi_\lambda|^2}|\triangle^T\phi_\lambda|+|\langle\nabla^T \sqrt{1+|\nabla^T\phi_\lambda|^2}, \nabla^T\phi_\lambda\rangle| dV.
\end{split}
\end{equation}

Since $\triangle^T\in OPS^2(\partial M)$, we can use the lemma for $L^p$ bounds to get
\begin{equation}
\|\triangle^T\phi_\lambda\|_{L^2}\lesssim\lambda^2\|\phi_\lambda\|_{L^2}
\end{equation}
and
\begin{equation}
\int_{\partial M}\|\nabla^T\phi_\lambda\|^2=-\int_{\partial M}\phi_\lambda\triangle^T\phi_\lambda\leq\|\phi_\lambda\|_{L^2}\|\triangle^T\phi_\lambda\|_{L^2}\lesssim\lambda^2\|\phi_\lambda\|_{L^2}^2.
\end{equation}
Therefore, the first term is bounded by
\begin{equation}
\int_{\partial M}\sqrt{1+|\nabla^T\phi_\lambda|^2}|\triangle^T\phi_\lambda|dV\leq\|\sqrt{1+|\nabla^T\phi_\lambda|^2}\|_{L^2}\|\triangle^T\phi_\lambda\|_{L^2}\lesssim\lambda^3\|\phi_\lambda\|_{L^2}^2.
\end{equation}

For the the second term,
\begin{equation}
\begin{split}
\int_{\partial M}\big|\langle\nabla^T& \sqrt{1+|\nabla^T\phi_\lambda|^2}, \nabla^T\phi_\lambda\rangle\big|dV=\int_{\partial M}\frac{|(\nabla^T)^2\phi_\lambda(\nabla^T\phi_\lambda,\nabla^T\phi_\lambda)|}{\sqrt{1+|\nabla^T\phi_\lambda|^2}}dV \\
&\leq\|(\nabla^T)^2\phi_\lambda\|_{L^2}\|\nabla^T\phi_\lambda\|_{L^2}\|\frac{\nabla^T\phi_\lambda}{\sqrt{1+|\nabla^T\phi_\lambda|^2}}\|_{L^\infty}\lesssim\lambda\|(\nabla^T)^2\phi_\lambda\|_{L^2}\|\phi_\lambda\|_{L^2}.
\end{split}
\end{equation}
Since $\partial M$ is compact without boundary, for any smooth function $f$ on $\partial M$,
\begin{equation}
\int_{\partial M} |(\nabla^T)^2 f|^2=-\int_{\partial M}Ric_{\partial M}(\nabla^T f,\nabla^T f)+(\triangle^T f)^2dV.
\end{equation}
Use this formula on $\phi_\lambda$,
\begin{equation}
\int_{\partial M} |(\nabla^T)^2\phi_\lambda|^2\leq\|Ric_{\partial M}\|_{L^\infty}\|\nabla^T\phi_\lambda\|_{L^2}^2+\|\triangle^T\phi_\lambda\|_{L^2}^2\lesssim\lambda^4\|\phi_\lambda\|_{L^2}^2.
\end{equation}
Thus the second term is bounded by
\begin{equation}
\int_{\partial M}|\langle\nabla^T \sqrt{1+|\nabla^T\phi_\lambda|^2}, \nabla^T\phi_\lambda\rangle|dV\lesssim\lambda\|(\nabla^T)^2\phi_\lambda\|_{L^2}\|\phi_\lambda\|_{L^2}\lesssim\lambda^3\|\phi_\lambda\|_{L^2}^2
\end{equation}
Combining the estimation for both term, we can get the desired bound for $\int_{\tilde{Z}_\lambda^\alpha}|\nabla^T\phi_\lambda|^2 ds$.
\end{proof}
Finally, we can estimate the size of boundary nodal sets.
\begin{thm}
For $\phi_\lambda$, $\alpha$ as above, we have
\begin{equation}
\big|\tilde{Z}_\lambda^\alpha\big|\gtrsim\lambda^{\frac{4-n}{2}}.
\end{equation}
\end{thm}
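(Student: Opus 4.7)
The plan is to combine the two bounds already established for $\tilde{Z}_\lambda^\alpha$ with the Cauchy--Schwarz inequality, and then use the $L^p$ machinery to compare the $L^1$ and $L^2$ norms of $\phi_\lambda$. Concretely, the corollary just established gives the lower bound
\begin{equation*}
\lambda^2\|\phi_\lambda\|_{L^1}\lesssim\int_{\tilde{Z}_\lambda^\alpha}|\nabla^T\phi_\lambda|\,ds,
\end{equation*}
while the preceding proposition gives the upper bound
\begin{equation*}
\int_{\tilde{Z}_\lambda^\alpha}|\nabla^T\phi_\lambda|^2\,ds\lesssim\lambda^3\|\phi_\lambda\|_{L^2}^2.
\end{equation*}
Applying Cauchy--Schwarz to the single-power integral splits it as
\begin{equation*}
\int_{\tilde{Z}_\lambda^\alpha}|\nabla^T\phi_\lambda|\,ds\leq\bigl|\tilde{Z}_\lambda^\alpha\bigr|^{1/2}\left(\int_{\tilde{Z}_\lambda^\alpha}|\nabla^T\phi_\lambda|^2\,ds\right)^{1/2},
\end{equation*}
and chaining the three estimates yields
\begin{equation*}
\lambda^2\|\phi_\lambda\|_{L^1}\lesssim\lambda^{3/2}\|\phi_\lambda\|_{L^2}\bigl|\tilde{Z}_\lambda^\alpha\bigr|^{1/2}.
\end{equation*}

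Next I would convert the ratio $\|\phi_\lambda\|_{L^1}/\|\phi_\lambda\|_{L^2}$ into a polynomial factor in $\lambda$. The abstract assumption on $\Psi$ is precisely that $\Psi$ is an elliptic first order pseudo-differential operator with principal symbol a nonzero multiple of that of $\sqrt{-\triangle^T}$, so Theorem \ref{lp estimate} applies to $\phi_\lambda$ exactly as it did to $e_\lambda$ and $\partial_\nu e_\lambda$ in the three Steklov cases. Taking $p=\frac{2n}{n-2}$ and interpolating $\|\phi_\lambda\|_{L^2}\leq\|\phi_\lambda\|_{L^1}^{2/(n+2)}\|\phi_\lambda\|_{L^p}^{n/(n+2)}$ with Hölder, exactly as in the derivation of the $L^1$ bounds given right after the statement of Theorem \ref{lp estimate}, gives
\begin{equation*}
\|\phi_\lambda\|_{L^1}\gtrsim\lambda^{-\frac{n-2}{4}}\|\phi_\lambda\|_{L^2}.
\end{equation*}

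Inserting this into the Cauchy--Schwarz inequality above and cancelling $\|\phi_\lambda\|_{L^2}$ from both sides gives
\begin{equation*}
\lambda^{2-\frac{n-2}{4}}\lesssim\lambda^{3/2}\bigl|\tilde{Z}_\lambda^\alpha\bigr|^{1/2},
\end{equation*}
so that $\bigl|\tilde{Z}_\lambda^\alpha\bigr|^{1/2}\gtrsim\lambda^{1/2-\frac{n-2}{4}}=\lambda^{(4-n)/4}$, which squares to the desired bound. There is no real obstacle here; the content of the argument has already been done in the two preparatory statements, and the only thing to check is that the abstract assumptions on $\Psi$ and $\phi_\lambda$ are enough to invoke Theorem \ref{lp estimate} in its boundary form. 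Finally, specializing to $\Psi=\sqrt[3]{\Theta},\Xi,\Pi$ and $\phi_\lambda=e_\lambda|_{\partial M},\partial_\nu e_\lambda,\partial_\nu e_\lambda$ respectively, and setting $\alpha=0$, recovers Theorem \ref{boundaryset}.
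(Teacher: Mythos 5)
Your proof is correct and is essentially identical to the paper's: both chain the corollary's lower bound, Cauchy--Schwarz on $\tilde{Z}_\lambda^\alpha$, the proposition's $L^2$ upper bound, and the $L^1$--$L^2$ comparison $\|\phi_\lambda\|_{L^1}\gtrsim\lambda^{-\frac{n-2}{4}}\|\phi_\lambda\|_{L^2}$ coming from Theorem \ref{lp estimate}. The only difference is that you spell out the interpolation exponents and the applicability of the $L^p$ estimate to the abstract $\Psi$, which the paper leaves implicit.
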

\begin{proof}
From the bounds above, 
\begin{equation}
\lambda^2\|\phi_\lambda\|_{L^1}\lesssim\int_{\tilde{Z}_\lambda^\alpha} |\nabla^T\phi_\lambda|ds\leq\big(\int_{\tilde{Z}_\lambda^\alpha} |\nabla^T\phi_\lambda|^2ds\big)^\frac{1}{2}\big|\tilde{Z}_\lambda^\alpha\big|^\frac{1}{2}\lesssim \lambda^\frac{3}{2}\big|\tilde{Z}_\lambda^\alpha\big|^\frac{1}{2}\|\phi_\lambda\|_{L^2}.
\end{equation}
Using the $L^p$ estimate \eqref{lp estimate} for the $\phi_\lambda$,
\begin{equation}
\|\phi_\lambda\|_{L^1}\gtrsim\lambda^{-\frac{n-2}{4}}\|\phi_\lambda\|_{L^2}.
\end{equation}
Therefore,
\begin{equation}
\lambda^{\frac{4-n}{4}}\lesssim\big|\hat{Z}_\lambda^\alpha\big|^\frac{1}{2}.
\end{equation}
\end{proof}
We can get the theorem \ref{boundaryset} by plugging in $\alpha=0$.

\end{document}